\numberwithin{equation}{section}
\newtheorem{thm}{Theorem}
\newtheorem{lm}{Lemma}
\newtheorem{prop}{Proposition}
\newtheorem{remark}{Remark}
\newcommand{\subeqref}[2]{$ \eqref{#1}_{#2} $}
\newcommand{\norm}[2]{\Arrowvert #1 \Arrowvert_{#2}}
\newcommand{\Lnorm}[1]{L^{#1}}
\newcommand{\abs}[2]{\bigl| #1 \bigr|^{#2}}
\newcommand{\dt}[0]{\partial_t}
\newcommand{\dr}[0]{\partial_r}
\title{On the Self-similar Solutions to Full Compressible Navier-Stokes Equations without Heat Conductivity}
\author{Xin Liu, Yuan Yuan}
\begin{document}

\maketitle

\begin{abstract}
	In this work, we establish a class of globally defined, large solutions to the free boundary problem of compressible Navier-Stokes equations with constant shear viscosity and vanishing bulk viscosity. We establish such solutions with initial data perturbed around any self-similar solution when $ \gamma > 7/6 $. In the case when $ 7/6 < \gamma < 7/3 $, as long as the self-similar solution has bounded entropy, a solution with bounded entropy can be constructed. It should be pointed out that the solutions we obtain in this fashion do not in general keep being a small perturbation of the self-similar solution due to the second law of thermodynamics, i.e., the growth of entropy. If in addition, in the case when $ 11/9 < \gamma < 5/3 $, we can construct a solution as a global-in-time small perturbation of the self-similar solution and the entropy is uniformly bounded in time.
\end{abstract}

\setcounter{tocdepth}{1}
\tableofcontents

\section{Introduction}

\subsection{Equations, background and motivations}

In this work, we are considering the following compressible Navier-Stokes system without heat conductivity for non-isentropic flows in spherically symmetric motions:
\begin{equation}\label{CNS}
	\begin{cases}
		\dt (r^2 \rho) + \dr ( r^2 \rho u) = 0 & r \in (0,R(t)),\\
		\dt (r^2 \rho u) + \dr (r^2 \rho u^2) + r^2 \dr p \\
		~~~~  = (2\mu+\lambda) r^2 \dr \biggl( \dfrac{\dr(r^2u)}{r^2} \biggr) & r \in (0,R(t)),\\
		\dfrac{1}{\gamma-1} \bigl( \dt (r^2 p) + \dr (r^2 p u) \bigr) + p \dr (r^2 u) \\
		~~~~  = 2\mu r^2 \biggl( (\dr u)^2 + 2 (\dfrac{u}{r})^2 \biggr) 
		+ \lambda r^2 \biggl( \dr u + 2 \dfrac{u}{r} \biggr)^2& r \in (0,R(t)),
	\end{cases}
\end{equation}
where $ \rho, u, p $ are the scaler density, the radial velocity and the pressure potential, respectively, and $ R(t) $ is the radius of the evolving domain. $ \mu,  \lambda $ are the viscosity coefficients satisfying the Lam\'e relation
$$ \mu \geq 0, ~ 2\mu + 3 \lambda \geq 0, $$
representing the non-negativity of the shear and bulk viscosities, respectively. Also, $ \gamma > 1 $ is the thermodynamic coefficient of the fluid. In particular, for the ideal gas, the pressure potential $ p $ and the specific inner energy $ e $ can be expressed as, in terms of the temperature $ \theta $ and the density $ \rho $, $$ p = K\rho\theta, ~ e = c_\nu \theta, $$
for some positive constants $ K $ and $ c_\nu $, referred to as the thermodynamic and specific inner energy coefficients, respectively. Then $ \gamma $ is given by, in this case, 
$$ \gamma = 1 + \dfrac{K}{c_\nu} > 1. $$ 
Moreover, denote the entropy $ s $ by
\begin{equation}\label{df:entropy}
s : = c_\nu \log \dfrac{p}{\rho^{\gamma}} + \bar s
\end{equation}
for some arbitrary constant $ \bar s \in \mathbb R $. It is easy to verify, the Gibbs-Helmholtz equation,
\begin{equation*}
ds = \theta^{-1} de - p \rho^{-2} \theta^{-1} d\rho,
\end{equation*}
is satisfied. 

In this work, we consider the flows with only positive shear viscosity while the bulk viscosity vanishes. That it, 
\begin{equation}\label{weak-viscosity}
\mu > 0, ~ 2\mu + 3\lambda = 0.
\end{equation}
In this case, system \eqref{CNS} can be written as,
\begin{equation*}\tag{\ref{CNS}'}\label{CNS'}
\begin{cases}
\dt (r^2 \rho) + \dr ( r^2 \rho u) = 0 & r \in (0,R(t)),\\
\dt (r^2 \rho u) + \dr (r^2 \rho u^2) + r^2 \dr p 
= \dfrac{4}{3}\mu r^2 \dr \biggl( \dfrac{\dr(r^2u)}{r^2} \biggr) \\
~~~~ ~~~~ = \dfrac{4}{3} \mu r^2 \biggl( \dr u - \dfrac{u}{r} \biggr)_r + 4\mu r^2 \biggl( \dfrac{u}{r} \biggr)_r & r \in (0,R(t)),\\
\dfrac{1}{\gamma-1} \bigl( \dt (r^2 p) + \dr (r^2 p u) \bigr) + p \dr (r^2 u) \\
~~~~ ~~~~ = \dfrac{4}{3}\mu r^2 \biggl( \dr u - \dfrac{u}{r} \biggr)^2 & r \in (0,R(t)),
\end{cases}
\end{equation*}
Also, system \eqref{CNS} is complemented with the following boundary conditions
\begin{equation}\label{bc}
\begin{gathered}
p - \dfrac{4}{3} \mu \biggl(\dr u - \dfrac{u}{r} \biggr) \big|_{r = R(t)} = 0,\\
u|_{t= 0} = 0,\\
R'(t) = u(R(t),t),
\end{gathered}
\end{equation}
representing the free surface stress tension, the zero central velocity, and the evolution of the free boundary, respectively.

There have been a huge number of literatures concerning the compressible Navier-Stokes equations (CNS). It surely will be too ambitious to mention all the important works. Instead, we will mention some of those closely related to our problem. For instance, the Cauchy and first initial boundary value problems have been studied widely. In the absence of vacuum (i.e. $ \rho \geq \underline\rho > 0 $), the local well-posedness of classical solutions has been investigated by Serrin \cite{Serrin1959}, Itaya \cite{Itaya1971}, Tani \cite{Tani1977,Tani1986}. The poineering works of Matsumura and Nishida \cite{Matsumura1980,Matsumura1983} showed the global stability of equilibria for the heat conductive flows with respect to small perturbations. On the other hand, if the heat conductivity is taken away from the equations, the entropy wave is no longer  dissipative, even though the acoustic wave dissipates thanks to the viscosity. In such a scenario, Liu and Zeng in \cite{LiuZeng1999} studied the large time behavior of such a system of composite type with initial data closed to a constant state. 

When the density profile contains vacuum states (i.e. $ \rho \geq 0$), Cho, Choe, Kim \cite{Cho2004,Cho2006a,Cho2006c} showed the local well-posedness of CNS for isentropic and heat conductive flows. With small initial energy, Huang, Li, Xin \cite{HuangLiXin2012,Huang2018} established the global well-posedness for the isentropic and heat conductive flows. However, These solutions fail to track the entropy in the vacuum area. In fact, as pointed out by Xin and Yan \cite{zpxin1998,XinYan2013}, the classical solutions to non-heat-conductive CNS with bounded entropy will blow up in finite time due to the appearance of vacuum states. Also, as pointed out in \cite{Liu1998a} by Liu, Xin, Yang, the vacuum states for CNS may not produce physically desirable solutions. More recently, Li, Wang, Xin \cite{Li2017b} showed that with vacuum states, the classical solutions to CNS do not exist with finite entropy. 

Motivated by the studies mentioned above, in order to establish a solution with bounded entropy and vacuum states, we are working on the free boundary problem of CNS. Before moving on to our works, it is worth mentioning some previous works, if not all of them, in the following. When the density connects to the vacuum area on the moving boundary with a jump, the local well-posedness theory and the global stability of equilibria can be tracked back to Solonnikov, Tani, Zadrzy\'nska, and Zaj\c{a}czkowski, \cite{Solonnikov1992,Zajaczkowski1993,Zadrzynska1994,Zajaczkowski1994,Zajaczkowski1995,Zajaczkowski1998,Zajaczkowski1999,Zadrzynska2001}. On the other hand, when the density profile connects continuously to vacuum across the moving boundary, the degeneracy of the density causes singularities when establishing derivative estimates. This has been pointed out by Liu \cite{Liu1996} for inviscid flows with physical vacuum. With weighted energy estimates, Jang, Masmoudi, Coutand, Lindblad, Shkoller established the local well-posedness in \cite{Jang2010a,Jang2009b,Coutand2012a,Coutand2011a,Coutand2010} for such flows. See also \cite{Gu2012,Gu2015,LuoXinZeng2014,Hadzic2016a,Hadzic2016}. In the case of viscous flows, the extra viscosities bring more regularity estimates to the velocity field. Consequently, the degeneracy is comparably causing fewer troubles. See \cite{Luo2000a,ZengHH2015,XL2018,Jang2010} for the isentropic flows. We emphasize that the physical vacuum profiles mentioned here commonly exist in a lot of physical models, such as, the gaseous star problem, the Euler damping equations, etc. We refer these results to \cite{LuoXinZeng2015,LuoXinZeng2016,ZengHH2015a,ZengHH2017}.

However, the studies of free boundary problems mentioned above mainly concern the isentropic flows. We start the study of free boundary problem for non-isentropic flows by studying the equilibria of the radiation gaseous stars in \cite{XL2017}, in which we establish the corresponding degeneracy of density and temperature near the vacuum boundary. Also, we establish the local well-posedness with such degenerate profiles in \cite{XL2017,LY2018}. 

This work is part of our project on studying the large time dynamics of flows with bounded entropy in the setting of free boundary problems. In particular, as a starting point, we are investigating the flows without heat conductivity in spherically symmetric motions. From \eqref{df:entropy}, in order to establish a solution to \eqref{CNS} with bounded entropy and density connecting to vacuum continuously, it is necessary for the pressure potential $ p $ to vanish on the moving boundary. In particular, by imposing the vanished bulk viscosity in \eqref{weak-viscosity}, the terms on the right of \subeqref{CNS}{3} vanish together with the pressure (see, i.e., \subeqref{CNS'}{3} and \eqref{bc}). In particular, the choice of the viscosity coefficients in \eqref{weak-viscosity} is indicated by the kinetic theory (see, i.e., \cite[pp.3, (1.11)]{Lions1996}). Another benefit given by \eqref{weak-viscosity} is that one can construct self-similar solutions to \eqref{CNS'} as shown later, similar to those of inviscid flows in \cite{Sideris2012}, and the self-similar solutions have non-growing, hence possibly bounded, entropy. However, this comes with a price. As one can verify easily, the viscosity tensor only guarantee the dissipation of $ \dr u - \dfrac{u}{r} $ in system \eqref{CNS'}, which is not enough, in general, to obtain sufficient regularity for $ u $. We notice, from the work of Had\v{z}i\'c and Jang in \cite{Hadzic2016a} (also \cite{Hadzic2016}), by linearizing \subeqref{CNS'}{2} near a self-similar solution in the Lagrangian coordinates, one can obtain a viscous wave equation with extra damping, which gives us the missing estimate of $ u $. We have made use of such a structure in \cite{XL2018a} to deal with radiation gaseous stars with isentropic or heat conductive models. In this work, we will continue exploring this structure, coupling with non-decreasing entropy. We emphasize here, the non-decreasing entropy in \eqref{CNS'} is the consequence of the hyperbolic transport equation \subeqref{CNS'}{3} with the source from the viscous friction. This corresponds to the second law of thermodynamics, and brings us the main difficulty in establishing the entropy-bounded solution of \eqref{CNS'}. In particular, to carefully track the growing of the entropy and its coupling with the momentum is the main obstacle in this work. 

\subsection{Self-similar solutions}

Our goal is to investigate the self-similar solutions to system \eqref{CNS'}. In order to do so, consider the ansatz:
\begin{equation}\label{ansatz}
\begin{cases}
{r} = {r}(x,t) = \alpha(t) x, \\
{u}({r},t) = {u}(x,t) = \alpha'(t) x, \\
{\rho}({r},t) = {\rho}(x,t) = \alpha^{-3}(t) \overline{\rho}(x),\\
p(r,t) = p(x,t) = \beta(t) \overline{p}(x),
\end{cases}
\end{equation}
where $ \alpha, \beta $ are positive functions of $ t \in [0,\infty) $, and $ \overline{\rho}, \overline{p} $ are non-negative functions of $ x \in [0,1] $. Then after substituting \eqref{ansatz} into \eqref{CNS'} and \eqref{bc}, $ \alpha, \beta, \overline{\rho}, \overline{p} $ satisfy the equations, 
\begin{equation*}
\begin{gathered}
\overline{p}\big|_{x=1} = 0,\\
\alpha'' x \overline{\rho} + \alpha^2 \beta (\overline{p})_x = 0,\\
\alpha\beta' + 3 \gamma \alpha'\beta=0.
\end{gathered}
\end{equation*}
Consequently, without loss of generality, one can take 
$ \beta = \alpha^{-3\gamma} $.
Therefore, the self-similar solutions in the form of \eqref{ansatz} are determined by the following system:
\begin{equation}\label{eq:self-similar-sol}
\begin{cases}
\alpha^{3\gamma-2} \alpha'' = \delta, \\
\delta x \overline{\rho} + (\overline{p})_x = 0,\\
\overline{p}\big|_{x=1}=0, ~ \beta = \alpha^{-3\gamma},
\end{cases}
\end{equation}
for some constant $ \delta \in \mathbb{R}^+ $, since we are interested in solutions with $ \overline\rho, \overline{p} > 0 $ in $ x \in (0,1) $. For some technical reason, we assume further that
\begin{equation}\label{density:boundary}
	\overline\rho(x) \simeq (1-x)^\varrho, ~ \text{for some} ~ \varrho \geq 0. 
\end{equation} 
In particular, given any fixed function $ \overline{\rho} \in C[0,1] $ and any fixed positive constant $ \delta \in (0,\infty) $, together with initial data $ (\alpha, \alpha') \big|_{t=0} = ( \alpha_0, \alpha_1) \in (0,\infty)\times (-\infty,\infty)  $, system 
\eqref{eq:self-similar-sol} is globally well-posed and there are constants $ c_1, c_2 $ depending on $ \alpha_0, \alpha_1, \delta $, such that
\begin{equation}\label{self-similar-asymptote}
\sup_{t > 0 }\dfrac{\alpha(t)}{c_1 + c_2 t} = 1.
\end{equation}
Indeed, from \subeqref{eq:self-similar-sol}{1}, one has
\begin{equation*}
\dfrac{1}{2} \bigl( (\alpha')^2 \bigr)' = \dfrac{\delta}{3-3\gamma} \bigl( \alpha^{3-3\gamma} \bigr)'.
\end{equation*}
Therefore, $$ (\alpha')^2 + \dfrac{2\delta}{3\gamma - 3} \alpha^{3-3\gamma} = \alpha_1^2 + \dfrac{2\delta}{3\gamma - 3} \alpha_0^{3-3\gamma} \in (0,\infty), $$
which implies that $ \alpha $ is globally defined and strictly positive. Together with \subeqref{eq:self-similar-sol}{1}, $ \alpha'' $ is strictly positive and for $ T^* $ large enough and $ t \in (T^*,\infty) $, $ \alpha'(t) > 0 $. \eqref{self-similar-asymptote} follows after evaluating the limit as $ t \rightarrow \infty $ using L'H\^{o}pital's rule. It is worth noticing that, the assumption \eqref{density:boundary} allows the density profile to connect to vacuum either with a jump or continuously, and the physical vacuum profile (see \cite{Liu2016}) is allowed. 

Recalled, in the isentropic case (i.e.,taking $ p = \rho^\gamma $), similar arguments as above still hold and the density profile $ \overline{\rho} $ is determined by
$$ \delta x \overline \rho + (\overline{\rho}^\gamma)_x = 0,~ \overline{\rho}\big|_{x=1} = 0, $$
with given total mass $ \int_0^1 \overline{\rho} \,dx = M \in (0,\infty) $. Consequently, in the isentropic case, the density function is fully determined by $ \delta $ and total mass $ M $, which are two numbers. However, in our setting (i.e. non-isentropic), the density profile is fully free provided that it is an integrable non-negative function in $[0,1] $. Instead, the pressure $ \overline{p} $ (equivalently, the entropy) is determined by $ \delta $ and $ \overline{\rho} $, which are a number and a function.

Moreover, the entropy for the self-similar solutions of the form \eqref{ansatz} are given by
\begin{equation}\label{ansatz-entropy-0}
s(x,t) = c_\nu \log \dfrac{\overline p}{\overline{\rho}^\gamma} + \bar s,
\end{equation}
which is independent of $ t \in (0,\infty) $ for any fixed $ x \in (0,1) $. Therefore, if $ s $ is bounded initially, it remains so in the coming future. Without loss of generality, one can take 
\begin{equation}\label{ansatz-entropy}
\bar s = - c_\nu \log \dfrac{\overline{p}}{\overline{\rho}^\gamma},
\end{equation}
provided that it is finite, 
and so $ s \equiv 0 $.
In particular, the self-similar solutions in the isentropic case satisfy \eqref{ansatz} and \eqref{ansatz-entropy} with $ s $ being a constant in the space-time domain.

\subsection{Perturbation, Lagrangian formulation and main theorem}

Compared to the isentropic flows and the inviscid flows, the growth of entropy due to the turbulence viscosities, i.e. the last term on the right of \subeqref{CNS'}{3}, contributes the main difficulty in studying the dynamics of system \eqref{CNS}. In fact, such a contribution to the entropy is nothing but the second low of thermodynamics in such a system. Even though the vanishing of bulk viscosity in \eqref{weak-viscosity} benefits us in a way  that any self-similar solution given by \eqref{ansatz} and \eqref{eq:self-similar-sol} has non-growing entropy, i.e. \eqref{ansatz-entropy-0}, a perturbation, in general, will yield a non-trivial shear viscosity which will make a contribution to the growth of entropy. Consequently, 
there is no guarantee that the perturbation, no matter how small it is initially, of this self-similar solution, will decay, or keep small, globally in time. Indeed, \subeqref{CNS'}{3} is a hyperbolic equation of $ p $ with sources, and no obvious damping, nor dissipation. 
Our goal in this work is to study the global-in-time stability of the self-similar solutions given by \eqref{ansatz} and \eqref{eq:self-similar-sol}, above. 

On the other hand, as discussed, the self-similar solutions of the form \eqref{ansatz} are fully determined by system \eqref{eq:self-similar-sol}. In particular, with any one of the non-negative functions $  \overline{\rho}, \overline{p} $ and a fixed constant $ \delta \in (0,\infty) $, system \eqref{eq:self-similar-sol} can be reduced to an ODE of $ \alpha $, i.e. \subeqref{eq:self-similar-sol}{1}. Moreover, \eqref{ansatz-entropy} implies that any one of $ \overline{\rho}, \overline{p} $ can be replaced by $ \bar s $. For this reason, we say the self-similar solutions of the form \eqref{ansatz} are labeled by $ \lbrace \delta, \overline{\rho} \rbrace $, where $ \overline \rho $ can be replaced by $ \overline{p} $ or $ \bar s $. Then our stability problem can be phrased as follows:
\begin{center}
	is the manifold of self-similar solutions of the form \eqref{ansatz} stable ? 
\end{center}
Suppose the answer is yes. Then we should have the following equivalent description:
\begin{center}
	a small perturbation of a self-similar solution of the form \eqref{ansatz} will result in a relabeled self-similar solution as the asymptote as time grows up.
\end{center}
We demonstrate this statement when $ \frac{11}{9} < \gamma < \frac{5}{3} $ in the sense that the perturbation variables (defined in \eqref{pert-variable-01}, below) have limits as $ t \rightarrow \infty $. Indeed, in our rescaled time-coordinate $ \tau $ (i.e., \eqref{rescaled-time}, below), $ \eta_\tau, x\eta_{x\tau} $ admit a decay rate of $ \alpha^{-\sigma_1} \simeq e^{-\sigma_1 \alpha_1 \tau} $ (see, \eqref{thmest:pointwise-1}, below), and hence $ \eta, x\eta_x $ admit limits as $ \tau \rightarrow \infty $. On the other hand, equation \eqref{eq:perturbation} implies that  $ (1 + q)(1+\eta)^{2\gamma}(1+\eta+x\eta_x)^\gamma  $ is non-decreasing. Together with the uniform boundedness of $ q $ in  \eqref{thmest:pointwise-q-1}, below, this implies that the limit of $ q $ as $ \tau \rightarrow \infty $ exists. Unfortunately, we have not verified that the limit is actually a solution to \eqref{CNS'}.

Next, we define the perturbation variables with respect to a self-similar solution of the form \eqref{ansatz}. Let $ \eta = \eta(x,t), q = q(x,t) $ be functions satisfying
\begin{equation}\label{pert-variable-01}
r(x,t) = (1+\eta(x,t)) \alpha(t) x, ~
p(r,t) = (1+q(x,t)) \alpha(t)^{-3\gamma} \overline{p}(x),
\end{equation}
where $ r $ is given by the conservation of mass, i.e.,
\begin{equation}\label{pert-variable-02}
\int_0^r s^2 \rho(s, t)  \,ds = \int_0^{\alpha x} s^2 \alpha^{-3} \overline{\rho}(\alpha^{-1} s)  \,ds = \int_0^x y^2 \overline{\rho}(y) \,dy.
\end{equation}
Then after taking spatial and temporal derivatives of \eqref{pert-variable-02}, together with \eqref{pert-variable-01}, one can derive
\begin{equation}\label{pert-variable-03}
\begin{cases}
r(x,t) = (1+\eta(x,t)) \alpha(t) x, \\
\rho(r,t) = \rho(x,t) = \dfrac{x^2 \overline{\rho}(x)}{r^2 r_x} = (1+\eta)^{-2}(1+\eta + x\eta_x)^{-1} \alpha^{-3}(t) \overline \rho(x), \\
\dt r = u(r,t) = u((1+\eta(x,t)) \alpha(t) x, t),\\
p(r,t) = p(x,t) = (1+q(x,t)) \alpha(t)^{-3\gamma} \overline{p}(x).
\end{cases}
\end{equation}
After substituting \eqref{pert-variable-03} to \eqref{CNS'} and applying the change of coordinates $ (r,t) \leadsto (x,t) $, in terms of $ (\eta, q) $, system \eqref{CNS'} and boundary conditions \eqref{bc} are written as:
\begin{equation}\label{eq:perturbation-pre}
\begin{cases}
\dfrac{\overline{\rho}}{(1+\eta)^2} x \biggl( \alpha \dt^2 \eta + 2 \alpha_t \dt \eta \biggr) + \biggl( \dfrac{1}{1+\eta} - 1 - q \biggr) \alpha^{2-3\gamma} \delta \overline \rho x \\
~~~~ + \alpha^{2-3\gamma} q_x \overline p = \dfrac{4}{3} \mu \alpha^2 \biggl( \dfrac{\eta_t + x \eta_{xt}}{1+\eta + x \eta_x} - \dfrac{\eta_t}{1+\eta} \biggr)_x + 4\mu \alpha^2 \biggl( \dfrac{\eta_t}{1+\eta} \biggr)_x, \\
\overline p \dt q + \gamma \overline{p} (1+q) \biggl( \dfrac{\eta_t+x\eta_{xt}}{1+\eta+x\eta_x} + 2 \dfrac{\eta_t}{1+\eta} \biggr)\\
~~~~ = \dfrac{4}{3} \mu (\gamma-1) \alpha^{3\gamma} \biggl(\dfrac{\eta_t + x \eta_{xt}}{1+\eta+x\eta_x} - \dfrac{\eta_t}{1+\eta} \biggr)^2, 
\end{cases}
\end{equation}
$ x \in (0,1), t \in (0,\infty) $, 
with boundary condition
\begin{equation}\label{bc:perturbation-pre}
\biggl(\dfrac{\eta_t+x\eta_{xt}}{1+\eta+x\eta_x} - \dfrac{\eta_t}{1+\eta}\biggr) \big|_{x=1} = 0,
\end{equation}
where $ \delta, \alpha, \overline{\rho}, \overline{p} $ satisfy \eqref{eq:self-similar-sol}. 
In addition, we introduce the following change of variable in time,
\begin{equation}\label{rescaled-time}
\tau = \tau(t) := \int_0^t \dfrac{1}{\alpha(\sigma)} \,d\sigma.
\end{equation}
We use the same notations for the functions $ \alpha, \eta, q $ in the new coordinates $ (x,\tau) $ as in the coordinates $ (x,t) $. 

Then, system \eqref{eq:perturbation-pre} and boundary condition \eqref{bc:perturbation-pre} are written as, in the new coordinates:
\begin{equation}\label{eq:perturbation}
\begin{cases}
\dfrac{x\overline{\rho}}{(1+\eta)^2} \biggl( \alpha \partial_\tau^2 \eta + \alpha_\tau \partial_\tau \eta \biggr) + \biggl( \dfrac{1}{1+\eta} - 1 -q \biggr) \alpha^{4-3\gamma}\delta x \overline{\rho}\\
~~~~ + \alpha^{4-3\gamma} \overline{p} q_x  = \dfrac{4}{3} \mu \alpha^3 \biggl( \dfrac{\eta_\tau + x\eta_{x\tau}}{1+\eta+x\eta_x} - \dfrac{\eta_\tau}{1+\eta} \biggr)_x + 4\mu \alpha^3 \biggl( \dfrac{\eta_\tau}{1+\eta} \biggr)_x,\\
\overline{p} q_\tau + \gamma \overline{p} (1+q) \biggl( \dfrac{\eta_\tau + x\eta_{x\tau}}{1+\eta + x\eta_x} +2  \dfrac{\eta_\tau}{1+\eta} \biggr) \\
~~~~ =  \dfrac{4}{3} \mu (\gamma-1) \alpha^{3\gamma-1} \biggl(\dfrac{\eta_\tau + x\eta_{x\tau}}{1+\eta+x\eta_x} - \dfrac{\eta_\tau}{1+\eta}\biggr)^2,
\end{cases}
\end{equation}
and
\begin{equation}\label{bc:perturbation}
\biggl(\dfrac{\eta_\tau + x\eta_{x\tau}}{1+\eta + x \eta_x} - \dfrac{\eta_\tau}{1+\eta}\biggr) \big|_{x=1} = 0,
\end{equation}
where, recalled, $ \delta x \overline{\rho} = - (\overline{p})_x $.

Also, from \subeqref{eq:self-similar-sol}{1}, $ \alpha(\tau) $ satisfies
\begin{equation}\label{eq:expanding}
\alpha^{3\gamma - 4} \partial_\tau^2 \alpha - \alpha^{3\gamma - 5} (\partial_\tau \alpha)^2 = \delta,
\end{equation}
or equivalently, noticing $ \alpha_\tau \big|_{\tau = 0} = (\alpha \alpha_t) \big|_{t=0} = \alpha_0 \alpha_1 $, after multiplying the above equation with $ 2\alpha^{2-3\gamma} \alpha_\tau $ and integrating the resultant in $ \tau $, 
\begin{equation*}
\dfrac{\alpha_\tau^2}{\alpha^2} + \dfrac{2\delta}{3\gamma-3} \alpha^{3-3\gamma} = \alpha_1^2 + \dfrac{2\delta}{3\gamma-3} \alpha_0^{3-3\gamma}.
\end{equation*}
Consequently, as $ \tau \rightarrow \infty $, 
$$
\dfrac{\alpha_\tau}{\alpha} \rightarrow \biggl(\alpha_1^2 + \dfrac{2\delta}{3\gamma-3} \alpha_0^{3-3\gamma} \biggr)^{\frac{1}{2}}.
$$
In particular, if $ \alpha_1 > 0 $ (i.e., the flow is expanding starting from the beginning and $ \alpha $ is monotonically increasing), we have
\begin{equation*}
\alpha_1 \leq  \dfrac{\alpha_\tau}{\alpha} \leq \biggl(\alpha_1^2 + \dfrac{2\delta}{3\gamma-3} \alpha_0^{3-3\gamma} \biggr)^{\frac{1}{2}},
\end{equation*}
and
\begin{equation}\label{expandingrate}
\alpha_0 e^{\beta_1 \tau} \leq \alpha \leq \alpha_0 e^{\beta_2 \tau}, ~ \beta_1 \alpha \leq \alpha_\tau \leq \beta_2 \alpha,
\end{equation}
where $ \beta_1 = \alpha_1, \beta_2 = \biggl(\alpha_1^2 + \dfrac{2\delta}{3\gamma-3} \alpha_0^{3-3\gamma} \biggr)^{\frac{1}{2}} $. Therefore, by substituting \eqref{expandingrate} to \eqref{eq:expanding}, one has
\begin{equation}\label{expandingrate-02}
	\alpha_{\tau\tau} \leq \beta_2^2 \alpha + \delta \alpha^{4-3\gamma} \lesssim \beta_2^2 \alpha . 
\end{equation}
Notice, for fixed $ \alpha_0 $,
\begin{equation*}
	\beta_1 \simeq \beta_2 \simeq \alpha_1. 
\end{equation*}

Now we state the main theorem of this work.

\begin{thm}[Main theorem, informal statement]\label{thm:informal}
	Consider $ \gamma > \dfrac{7}{6} $, $ \alpha_0 > 0 $ and $ \alpha_1 > 0 $ large enough. The self-similar solutions given by \eqref{ansatz} and \eqref{eq:self-similar-sol}, with $$ \alpha(0) = \alpha_0, \dfrac{d}{dt}\alpha(0) = \alpha_1, $$ 
	are stable in the following sense: 
	with any small enough perturbation of the self-similar solutions initially, there exists a global strong solution to \eqref{CNS'}. If, in addition, $ \gamma \in (\dfrac{11}{9}, \dfrac{5}{3}) $, there exists a global strong solution to \eqref{CNS'} with uniformly bounded entropy as a small perturbation of an entropy-bounded, self-similar solution. 
\end{thm}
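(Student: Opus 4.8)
\emph{Strategy of the proof of Theorem \ref{thm:informal}.} The plan is to work with the perturbation system \eqref{eq:perturbation}--\eqref{bc:perturbation} in the rescaled time $ \tau $, for which the background profile expands exponentially, $ \alpha_0 e^{\beta_1\tau}\le\alpha\le\alpha_0 e^{\beta_2\tau} $ with $ \beta_1,\beta_2\simeq\alpha_1 $ by \eqref{expandingrate}, and to close a continuation argument driven by a nonlinear weighted energy estimate carrying an explicit decay rate. First I would establish local-in-time existence of a strong solution $ (\eta,q) $ in Sobolev spaces weighted by $ \overline\rho $ and by $ (1-x) $ near $ x=1 $ — matching the degeneracy \eqref{density:boundary}, under which $ \overline p\simeq(1-x)^{\varrho+1} $ since $ (\overline p)_x=-\delta x\overline\rho $ — building on the degenerate local well-posedness of \cite{XL2017,LY2018}; the regularity of $ q $ is propagated by the transport structure of \subeqref{eq:perturbation}{2}, while that of $ \eta $ comes from the viscous damped-wave structure of \subeqref{eq:perturbation}{1}. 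The theorem then reduces to a global a priori bound.

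The core is the estimate for $ \eta $. After dividing \subeqref{eq:perturbation}{1} by $ \alpha $, the expansion supplies two favourable features, in the spirit of Had\v{z}i\'c--Jang \cite{Hadzic2016a,Hadzic2016} (cf.\ \cite{XL2018a}): a friction term $ \tfrac{\alpha_\tau}{\alpha}\,\eta_\tau $ with $ \tfrac{\alpha_\tau}{\alpha}\ge\beta_1\simeq\alpha_1 $ large, and a decaying prefactor $ \alpha^{3-3\gamma}\to0 $ in front of the (possibly degenerate) elliptic and pressure-coupling part $ \bigl(\tfrac1{1+\eta}-1-q\bigr)\delta x\overline\rho+\overline p\,q_x $. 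I would run high-order weighted energy--dissipation estimates, testing \subeqref{eq:perturbation}{1} and its $ x $- and $ \tau $-differentiated versions against $ \eta_\tau,\eta_{x\tau},\eta_{\tau\tau} $ with weights tuned so that all boundary contributions at $ x=1 $ — where $ B:=\tfrac{\eta_\tau+x\eta_{x\tau}}{1+\eta+x\eta_x}-\tfrac{\eta_\tau}{1+\eta} $ vanishes by \eqref{bc:perturbation} — are absorbed. The viscous stress contributes a genuine dissipation, of order $ \mu\alpha^2\int x\,\eta_{x\tau}^2 $ plus higher-order analogues (note the \emph{growing} prefactor $ \alpha^2 $), which controls only the deviatoric velocity gradient $ B $; the missing control of $ \eta,\eta_x $ themselves is recovered by time-integrating $ \eta_\tau,\eta_{x\tau} $, which is where the friction $ \alpha_\tau/\alpha $ is decisive. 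Combining, one gets a Gronwall inequality $ \tfrac{d}{d\tau}\mathcal E+c\,\alpha_1\,\mathcal E\lesssim(\text{cubic})+(\text{decaying }q\text{-coupling}) $; for $ \alpha_1 $ large — whence the hypothesis in the theorem — the linear drift is absorbed and $ \mathcal E(\tau)\lesssim\mathcal E(0)\,\alpha^{-2\sigma_1}\simeq\mathcal E(0)e^{-2\sigma_1\alpha_1\tau} $, which by weighted embedding yields the pointwise bounds $ |\eta_\tau|+|x\eta_{x\tau}|\lesssim\alpha^{-\sigma_1} $ of \eqref{thmest:pointwise-1}.

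The entropy bound then follows from \subeqref{eq:perturbation}{2}, integrated as an ODE at fixed $ x $: with $ A:=\tfrac{\eta_\tau+x\eta_{x\tau}}{1+\eta+x\eta_x}+2\tfrac{\eta_\tau}{1+\eta}=\partial_\tau\log\bigl[(1+\eta+x\eta_x)(1+\eta)^2\bigr] $, equation \subeqref{eq:perturbation}{2} reads
\[
\partial_\tau\log\Bigl[(1+q)\bigl((1+\eta+x\eta_x)(1+\eta)^2\bigr)^{\gamma}\Bigr]=\frac{4\mu(\gamma-1)}{3\,\overline p\,(1+q)}\,\alpha^{3\gamma-1}B^2\ge0 ,
\]
so that $ (1+q)\bigl[(1+\eta+x\eta_x)(1+\eta)^2\bigr]^{\gamma} $ is non-decreasing, as already observed for \eqref{eq:perturbation}. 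Hence $ q $ — equivalently the entropy $ s=c_\nu\log\tfrac{p}{\rho^{\gamma}} $, via \eqref{ansatz}--\eqref{ansatz-entropy} — stays uniformly bounded precisely when the entropy-production integral $ \int_0^\infty\alpha^{3\gamma-1}\overline p^{-1}B^2\,d\tau $ converges. Away from $ x=1 $ one has $ \alpha^{3\gamma-1}\simeq e^{(3\gamma-1)\alpha_1\tau} $ and $ B^2\lesssim e^{-2\sigma_1\alpha_1\tau} $, so convergence forces $ 2\sigma_1>3\gamma-1 $; near $ x=1 $ the singular weight $ \overline p^{-1}\simeq(1-x)^{-(\varrho+1)} $ must be beaten by the vanishing of $ B $ there (from \eqref{bc:perturbation}) together with the weighted control of $ \eta_{x\tau} $. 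This trade-off — between the decay rate $ \sigma_1 $ attainable from the degenerate energy estimates (bounded above in terms of $ \gamma $) and the requirement $ 2\sigma_1>3\gamma-1 $, plus boundary-weight integrability — is exactly what confines the entropy-bounded, genuinely-small-perturbation regime to $ \gamma\in(\tfrac{11}{9},\tfrac53) $. For the weaker conclusion (a global strong solution for every $ \gamma>\tfrac76 $, with $ q $ merely finite and possibly drifting, by the second law of thermodynamics), one only needs $ q $ to grow slowly enough that the coupling terms $ \alpha^{3-3\gamma}\overline p\,q_x $ and $ (\tfrac1{1+\eta}-1-q)\alpha^{3-3\gamma}\delta x\overline\rho $ in \subeqref{eq:perturbation}{1} stay lower order against friction and dissipation — a much milder balance, hence the larger range. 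Feeding these bounds into the local theory extends the solution to $ \tau\in[0,\infty) $, and undoing \eqref{pert-variable-03} and \eqref{rescaled-time} produces the asserted global strong solution of \eqref{CNS'}; when $ \gamma\in(\tfrac{11}{9},\tfrac53) $ the integrability of $ \eta_\tau,x\eta_{x\tau} $ and the monotone quantity above moreover give the relabeling asymptotics announced in the Introduction.

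\textbf{The main obstacle} is the circular coupling all of this rests on: the entropy bound needs a fast enough decay of $ \eta_\tau $, yet that decay is produced by an energy estimate whose forcing and whose growing viscous-dissipation prefactor $ \alpha^2 $ are weighed against the expanding powers $ \alpha^{3-3\gamma},\alpha^{3\gamma-1} $ and against $ q $ itself, whose size is in turn governed by the entropy-production integral $ \int\alpha^{3\gamma-1}\overline p^{-1}B^2\,d\tau $. One therefore has to run the $ \eta $-estimate and the $ q $-estimate simultaneously in a bootstrap, tracking every power of $ \alpha $ and every boundary weight at $ x=1 $ in precise balance; it is exactly this exponent bookkeeping, together with the degeneracy \eqref{density:boundary} and the fact that the viscosity only dissipates $ B $ rather than the full gradient of $ u $, that forces the thresholds $ \tfrac76 $, $ \tfrac{11}{9} $ and $ \tfrac53 $.
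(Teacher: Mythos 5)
Your overall skeleton does match the paper's: work in the rescaled time $\tau$ where $\alpha\simeq e^{\alpha_1\tau}$ supplies damping, run $\alpha$-weighted energy--dissipation estimates for $\eta_\tau,\eta_{\tau\tau}$ and the pressure variable, close an $L^\infty$ bootstrap of the type \eqref{def:a-priori-assm}, and obtain the entropy bound by integrating \subeqref{eq:perturbation-0}{2} in $\tau$ and controlling $\int_0^\infty\alpha^{3\gamma-1}\overline p^{-1}\mathfrak B^2\,d\tau$. (Two smaller deviations: the paper runs the estimates for $\zeta=\overline p q\lbrack(1+\eta+x\eta_x)(1+\eta)^2\rbrack^\gamma$ rather than $q$, since $q$ itself need not stay small; and the pointwise bounds closing the bootstrap are not obtained by weighted embeddings, which the degenerate weight $\overline\rho$ would not permit near $x=1$, but by integrating the momentum equation in $x$ as in \eqref{ene:103} and \eqref{ene:105}.)

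The genuine gap is in the step that is supposed to produce the interval $(\frac{11}{9},\frac53)$. You substitute the a priori decay $\mathfrak B^2\lesssim\alpha^{-2\sigma_1}$ into the entropy-production integral and demand $2\sigma_1>3\gamma-1$; but the energy scheme only tolerates $2\sigma_1<r_1<\min\{6\gamma-6,2\}\le 2$, so your condition forces $3\gamma-1<2$, i.e.\ $\gamma<1$ --- an empty range --- and even the stronger bootstrap rate $\|\alpha\,\mathfrak B\|_{L^\infty}\le\omega$ (the paper takes $\sigma=1>\sigma_1$ for $\mathfrak B$) is insufficient, since $3\gamma-1-2>0$ for all $\gamma>1$. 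The missing idea is the \emph{improved} decay of $\mathfrak B$ extracted from the momentum equation itself: integrating \subeqref{eq:perturbation-0}{1} over $(x,1)$ (and over $(0,x)$ after multiplying by $x^3(1+\eta)^3$) yields \eqref{ene:104}, namely $\|\mathfrak B\|_{\Lnorm{\infty}}\lesssim\alpha^{1-3\gamma}\bigl(\|\zeta\|_{\Lnorm{\infty}}+\|\eta\|_{\Lnorm{\infty}}\bigr)+\alpha^{-(r_1+2\sigma_1)/2}(\mathcal E_0+\mathcal E_1)^{1/2}$, together with the boundary-weighted version for $\overline p^{-1/2}\mathfrak B$ that exploits $\delta\int_x^1 x\overline\rho\,dx=\overline p$ to beat the singular factor $\overline p^{-1}$. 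Feeding this back into the integrated form of \subeqref{eq:perturbation-0}{2} gives a Riccati-type inequality for $\|\zeta/\overline p\|_{\Lnorm{\infty}}$ whose quadratic term carries the decaying coefficient $\alpha^{1-3\gamma}$ and whose forcing is of size $\alpha^{3\gamma-1-r_1-2\sigma_1}(\mathcal E_0+\mathcal E_1)$; closing it by a continuity argument for small $q_0$ requires $3\gamma-1<r_1+2\sigma_1$, which combined with $2\sigma_1<r_1<\min\{6\gamma-6,2\}$ is exactly the constraint \eqref{constraint-002} and yields $I_2=(\frac{11}{9},\frac53)$ (and, with $3\gamma-3<r_1+2\sigma_1$, the weaker Case 2 on $I_1$). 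Without this equation-derived gain on $\mathfrak B$ and the accompanying Riccati/continuity treatment of the $q$ self-coupling, your exponent bookkeeping cannot recover the stated thresholds.
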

This main theorem will be stated in terms of the perturbation variables in Theorem \ref{thm:full}.

\subsection{Comments and methodology}

One of the motivation of this study is to investigate the evolution of entropy in system \eqref{CNS'}. In particular, with our self-similar solution in the form of \eqref{ansatz}, as long as the entropy given by \eqref{ansatz-entropy-0} is bounded initially, the self-similar solution admits globally bounded entropy. Given such self-similar solution, we are studying the solution to \eqref{CNS'} with perturbations as presented in \eqref{pert-variable-03}. One can easily check the entropy for such a solution is given by, as in \eqref{df:entropy}, 
\begin{equation*}
	s = c_\nu \log \bigl\lbrack (1+q) ( 1+\eta)^{2\gamma} (1+\eta+x\eta_x)^\gamma \bigr\rbrack + c_\nu \log \dfrac{\overline p}{\overline \rho^\gamma} + \overline s. 
\end{equation*}
Consequently, in order to obtain a solution with bounded entropy, we will need, besides the boundedness of entropy of the self-similar solution (i.e.,  $ \log \dfrac{\overline p}{\overline \rho} < \infty $), the boundedness of the quantity
\begin{equation*}
	(1+q) ( 1+\eta)^{2\gamma} (1+\eta+x\eta_x)^\gamma.
\end{equation*}
This will require the integrability of 
$$ \alpha^{3\gamma - 1} \dfrac{\bigl(\dfrac{\eta_\tau + x\eta_{x\tau}}{1+\eta + x \eta_x} - \dfrac{\eta_\tau}{1+\eta}\bigr)^2}{\overline p}, $$
in $ \tau $. Indeed, \subeqref{eq:perturbation}{2} can be written as
\begin{equation}\label{eq:entropy}
\begin{gathered}
\dfrac{d}{d\tau} \biggl\lbrace (1+q) \bigl\lbrack ( 1 + \eta +x \eta_{x}) ( 1 + \eta )^2 \bigr\rbrack^\gamma \biggr\rbrace = \dfrac{4}{3} \mu (\lambda - 1)\bigl\lbrack ( 1 + \eta +x \eta_{x}) ( 1 + \eta )^2 \bigr\rbrack^\gamma \\
\times
\alpha^{3\gamma - 1} \dfrac{\bigl(\dfrac{\eta_\tau + x\eta_{x\tau}}{1+\eta + x \eta_x} - \dfrac{\eta_\tau}{1+\eta}\bigr)^2}{\overline p}.
\end{gathered}
\end{equation}
In the following, we denote
\begin{equation}\label{def:frak-B}
\mathfrak B := \dfrac{\eta_\tau + x\eta_{x\tau}}{1+\eta + x \eta_x} - \dfrac{\eta_\tau}{1+\eta}.
\end{equation}
Then after integrating \subeqref{eq:perturbation}{1} in $ (x, 1) $ for $ x \in (0,1) $, it holds
\begin{equation}\label{ana:001}
\begin{aligned}
& \dfrac{4}{3} \mu \alpha^3 \mathfrak B = - \int_x^1 \dfrac{x\overline{\rho}}{(1+\eta)^2} \biggl(\alpha \partial_\tau^2 \eta + \alpha_\tau \partial_\tau\eta \biggr)\,dx\\
& ~~~~ -\alpha^{4-3\gamma} \int_x^1 \biggl( \dfrac{1}{1+\eta} - 1 -q \biggr)  \delta \overline{\rho} x +  q_x \overline{p} \,dx + 4\mu \alpha^3 \int_x^1 \biggl( \dfrac{\eta_\tau}{1+\eta} \biggr)_x \,dx\\
& ~~ \lesssim \biggl(\int_x^1 \overline{\rho} \,dx \biggr)^{1/2} \biggl( \alpha^2 \int_x^1 x^2 \overline{\rho} \abs{\partial_\tau^2 \eta}{2} \,dx + \alpha_\tau^2 \int_x^1 x^2 \overline{\rho}\abs{\partial_\tau \eta}{2} \,dx \biggr)^{1/2} \\
& ~~~~ + \delta\alpha^{4-3\gamma}\biggl(\int_x^1 \overline{\rho} \,dx \biggr)^{1/2} \biggl( \int_x^1 x^2 \overline{\rho} \abs{\eta}{2}\,dx    \biggr)^{1/2} + \alpha^{4-3\gamma} \overline p(x) q(x) \\
& ~~~~ + \alpha^3 \int_x^1 \abs{ \biggl( \dfrac{\eta_\tau}{1+\eta} \biggr)_x }{} \,dx,
\end{aligned}
\end{equation}
supposed that $ \norm{\eta}{\Lnorm{\infty}}, \norm{x\eta_x}{\Lnorm{\infty}} $ are small enough.
Notice, on the one hand, 
\begin{equation*}
\biggl(\dfrac{\eta_\tau}{1+\eta}\biggr)_x = \dfrac{(1+\eta+x\eta_x)}{x(1+\eta)} \mathfrak B,
\end{equation*}
which vanishes on the boundary $ x=1 $. On the other hand, a direct calculation shows that
\begin{equation*}
\int_x^1 \overline{\rho} \,dx \lesssim \overline{p}(x),
\end{equation*}
for $ x \in (1/2,1) $ due to \eqref{eq:self-similar-sol}. 
Therefore, the inequality above actually gives us some estimate on the quantity
\begin{equation*}
\dfrac{\mathfrak B^2}{\overline{p}},
\end{equation*}
near the boundary $ x= 1 $. Combining inequalities \eqref{ana:001} and  \eqref{eq:entropy} forms an inequality which is like
\begin{equation*}
	\dfrac{d}{dt} q \leq  \alpha^{1-3\gamma} q^2 + A,
\end{equation*}
for some quantity $ A $. 
One can conclude from this inequality, via continuity arguments, the uniform boundedness of $ q $ provided that it is small enough initially, at least near the boundary. 
 However, the term we shorten as $ A $ in this inequality is obtained as, roughly speaking, the product of the square of terms on the right-hand side of \eqref{ana:001} and the growing factor $ \alpha^{3\gamma- 1} $, which seems to be growing quite fast for general $ \gamma $. Consequently, it seems too ambitious to get a perturbed solution with bounded entropy, even the entropy is bounded in the self-similar solution. Nevertheless, with some constraints on $ \gamma $, this can be done as shown in Proposition \ref{prop:boundedness-entropy}.

In general, we observe that the global existence of solutions to \eqref{eq:perturbation} only requires the regularity of the quantity
\begin{equation*}
	\overline p q.
\end{equation*}
In addition, inspired by our previous works \cite{XL2018a} (also by \cite{Hadzic2016a,Hadzic2016}), we impose the boundedness of $ \eta, x\eta_x $ and the decay of $ \eta_\tau, x\eta_{x\tau} $ as a priori assumptions. However, instead of imposing boundedness of $ \overline p q $ directly, we only impose the decay of the quantity $ \mathfrak B $. It turns out that, even $ \mathfrak B \simeq  \eta_\tau \pm x\eta_{x\tau} $ by definition, $ \mathfrak B $ has a better decay than $  \eta_\tau $ and $ x\eta_{x\tau}$ (see \eqref{def:a-priori-assm}). Unfortunately, this decay fails to guarantee the uniform boundedness of $ \overline p q $ (or $ q $) in general. Instead, we track the growth of $ \overline p q $, which is sufficient to establish the a priori energy estimates in sections \ref{sec:energy-estimate} and \ref{sec:interior}. With these estimates, in order to close the a priori assumption as in \eqref{def:a-priori-assm}, below, we shall establish some point-wise estimates. This step is inspired by our previous study in \cite{XL2018} and \cite{XL2017}. In fact, we employ some point-wise estimates in section \ref{sec:point-wise} to get the estimates of $ \eta_\tau, x\eta_{x\tau}, \mathfrak B $ in terms of the energy functionals. We point out that 
the manipulations in \eqref{ene:103} and \eqref{ene:105} are crucial in obtaining the point-wise estimates of $ \eta_\tau, x\eta_{x\tau} $ and $ \mathfrak B $, and hence closing the a priori estimates. 

In the end, we establish the regularity of our solutions in section \ref{sec:regularity}. In this part, with the well-established point-wise estimates, the estimates are relatively straight-forward. We only mention that the $ L^2 $ estimates of $ x\eta_{xx\tau} $ and $ \eta_{x\tau} $ are obtained through the relative entropy, which is introduced in \cite{LuoXinZeng2016}. 

By denoting the intervals of $ \gamma $ for the global existence of solutions, the global uniform boundedness of  $ q $ away from the center $ x = 0 $, and the global uniform boundedness of  $ q $ in the whole domain, as $ I_0, I_1, I_2 $, respectively, we have $ I_2 \subset I_1 \subset I_0 $, as described in Theorem \ref{thm:full}. Remarkably, when $ \gamma \in I_1 $, the $ L^\infty $ bound of $ q $ over the whole domain has a growing estimate as indicated by \eqref{thmfull:estimates-1}, even though we fail to obtain the uniform boundedness. 

Inspired by the analysis above, we consider the unknown $ ( \eta, \zeta ) $ with
\begin{equation}\label{def:weighted-q}
	\zeta := \overline p q \bigl \lbrack (1+\eta+x\eta_x)(1+\eta)^2\bigr\rbrack^\gamma,
\end{equation}
satisfying the system
\begin{equation}\label{eq:perturbation-0}
	\begin{cases}
		\dfrac{x\overline{\rho}}{(1+\eta)^2} \biggl( \alpha \partial_\tau^2 \eta + \alpha_\tau \partial_\tau \eta \biggr) - \alpha^{4-3\gamma} \dfrac{\delta x \overline{\rho} \eta}{1+\eta} \\
		~~~~ ~~~~ + \alpha^{4-3\gamma} \biggl( \dfrac{\zeta}{\bigl \lbrack (1+\eta+x\eta_x)(1+\eta)^2\bigr\rbrack^\gamma} \biggr)_x \\
		~~~~  = \dfrac{4}{3} \mu \alpha^3 \mathfrak B_x + 4\mu \alpha^3 \biggl( \dfrac{\eta_\tau}{1+\eta} \biggr)_x, \\
		\partial_\tau \zeta + \gamma \overline p (1+\eta+x\eta_x)^{\gamma-1}(1+\eta)^{2\gamma} (\eta_\tau + x\eta_{x\tau}) \\
		~~~~ ~~~~ + 2 \gamma \overline p (1+\eta+x\eta_x)^{\gamma}(1+\eta)^{2\gamma-1} \eta_\tau \\
		~~~~ = \dfrac{4}{3} \mu (\gamma - 1) \alpha^{3\gamma-1} \bigl \lbrack (1+\eta+x\eta_x)(1+\eta)^2\bigr\rbrack^\gamma \mathfrak B^2,
	\end{cases}
\end{equation}
where
\begin{equation*}
	\tag{\ref{def:frak-B}}
	\mathfrak B= \dfrac{\eta_\tau + x\eta_{x\tau}}{1+\eta+x\eta_x} - \dfrac{\eta_\tau}{1+\eta},
\end{equation*}
satisfying the boundary condition
\begin{equation*}{\tag{\ref{bc:perturbation}}}
	\mathfrak B\big|_{x=1} = 0
\end{equation*}
We introduce the initial data $ (\eta_0, \eta_1, \zeta_0) $ for \eqref{eq:perturbation-0},
\begin{equation}\label{initial:perturbation}
	\begin{gathered}
	(\eta, \eta_\tau, \zeta)\Big|_{t=0} = (\eta_0, \eta_1, \zeta_0) \in H^2(0,1) \times H^2(0,1) \times  H^1(0,1), \\
	~~ \text{with} ~ \zeta_0 = \overline p q_0 \bigl\lbrack (1+\eta_0 + x\eta_{0,x} )(1+\eta_0)^2  \bigr\rbrack^\gamma,
	 \end{gathered}
\end{equation}
for some $ q_0 $ representing the initial data of $ q $, 
and $ (\eta_{2}, \zeta_1) $ given by,
\begin{equation}\label{initial:temporal-derivative}
	\begin{aligned}
		& \dfrac{x\overline{\rho}}{(1+\eta_0)^2} \biggl( \alpha_0 \eta_2 + \alpha_0 \alpha_1 \eta_1 \biggr) - \alpha_0^{4-3\gamma} \dfrac{\delta x \overline{\rho} \eta_0}{1+\eta_0} \\
		& ~~~~ ~~~~ + \alpha_0^{4-3\gamma} \biggl( \dfrac{\zeta_0}{\bigl \lbrack (1+\eta_0+x\eta_{0,x})(1+\eta_0)^2\bigr\rbrack^\gamma} \biggr)_x \\
		& ~~~~  = \dfrac{4}{3} \mu \alpha_0^3 \biggl( \dfrac{\eta_1 + x\eta_{1,x}}{1+\eta_0 + x\eta_{0,x}} + 2 \dfrac{\eta_1}{1+\eta_0} \biggr)_x, \\
		&  \zeta_1 + \gamma \overline p (1+\eta_0+x\eta_{0,x})^{\gamma-1}(1+\eta_0)^{2\gamma} (\eta_1 + x\eta_{1,x}) \\
		& ~~~~ ~~~~ + 2 \gamma \overline p (1+\eta_0+x\eta_{0,x})^{\gamma}(1+\eta_0)^{2\gamma-1} \eta_1 \\
		& ~~~~ = \dfrac{4}{3} \mu (\gamma - 1) \alpha_0^{3\gamma-1} \bigl \lbrack (1+\eta_0+x\eta_{0,x})(1+\eta_0)^2\bigr\rbrack^\gamma \\
		& ~~~~ ~~~~ \times \biggl( \dfrac{\eta_1 + x\eta_{1,x}}{1+\eta_0 + x\eta_{0,x}} - \dfrac{\eta_1}{1+\eta_0} \biggr)^2.
	\end{aligned}
\end{equation}
In particular, bear in mind that since this work is studying the perturbation $ (\eta, q) $ in \eqref{pert-variable-03} of the self-similar solution given by \eqref{ansatz} and \eqref{eq:self-similar-sol}, we assume that 
\begin{equation}\label{initial:uniform-bound}
	\max\lbrace \norm{\eta_0}{\Lnorm{\infty}},  \norm{x \eta_{0,x}}{\Lnorm{\infty}}, \norm{\eta_1}{\Lnorm{\infty}}, \norm{x \eta_{1,x}}{\Lnorm{\infty}}, \norm{q_0}{\Lnorm{\infty}} \rbrace < \omega,
\end{equation}
for some constant $ \omega \in (0,1) $.

Notice, as a consequence of \eqref{bc:perturbation}, the fact $ \overline p\big|_{x=1} = 0 $, and the transport equation \subeqref{eq:perturbation-0}{2}, we have
$$ \zeta\big|_{x = 1} = \zeta_0\big|_{x=1} = 0, $$
for a regular solution to \eqref{eq:perturbation-0}. In particular, the regularity of the solution in this work is sufficient to guarantee the vanishing of $ \zeta $ on the boundary $ \lbrace x =1 \rbrace $. 

Now we introduce the energy and dissipation functionals that we are going to use in this work. Let $ T \in (0,\infty) $ be an arbitrary positive constant. Also, for any fixed $ r_1, \sigma_1 $ 
satisfying
\begin{equation}\label{constraint-000}
	2 - r_1 < 2 \sigma_1 < r_1 < \min \lbrace 6 \gamma - 6, 2 \rbrace,
\end{equation}
denote
\begin{equation}\label{indices}
\begin{gathered}
	l_1 := 6\gamma - 6 -r_1,  ~ r_2 := r_1 + 2\sigma_1 - 2 \leq r_1, \\
	l_2 := 6\gamma - 6 - r_2 = 6\gamma - 4 -r_1 - 2\sigma_1 , \\
	l_3 := l_1 + 2 = 6\gamma - 4 - r_1, ~ r_3 := r_1, \\
	r_4 := 2 - r_2 = 4 - r_1 - 2 \sigma_1 , \\
	l_4 :=  6\gamma - 6 + r_4 = 6\gamma - 2 - r_1 - 2 \sigma_ 1, \\
	\mathfrak a := r_1 + \sigma_1 + 1, ~ \mathfrak b := r_1. 
\end{gathered}
\end{equation}
Then, inside the whole spatial domain, we define:
\begin{equation}\label{def:functional-01}
	\begin{aligned}
		& \mathcal E_0 (T) : = \sup_{0 \leq \tau \leq T} \biggl\lbrace \alpha^{r_1}(\tau) \norm{x^2 \overline\rho^{1/2} \eta_\tau (\tau)}{\Lnorm{2}}^2 
			+ \alpha^{-l_1}(\tau) \norm{x \zeta(\tau)}{\Lnorm{2}}^2 \\
			& ~~~~+ \alpha^{r_2}(\tau) \norm{x^2 \overline\rho^{1/2} \eta_{\tau\tau} (\tau)}{\Lnorm{2}}^2 
			+ \alpha^{-l_2}(\tau) \norm{x \zeta_\tau(\tau)}{\Lnorm{2}}^2 \\
		& ~~~~ + \norm{x^2 \overline\rho^{1/2} \eta (\tau)}{\Lnorm{2}}^2  \biggr\rbrace , \\
		& \mathcal D_0(T): = \int_0^T \alpha^{r_1-1}(\tau) \alpha_\tau(\tau) \norm{x^2 \overline\rho^{1/2} \eta_\tau (\tau)}{\Lnorm{2}}^2 \,d\tau \\
			& ~~~~ + \int_0^T \alpha^{r_1+2}(\tau) \norm{x\bigl\lbrack (1+\eta) x\eta_{x\tau}(\tau) - x\eta_x \eta_\tau(\tau) \bigr\rbrack}{\Lnorm{2}}^2 \,d\tau\\
			& ~~~~ + \int_0^T \alpha^{r_2-1}(\tau) \alpha_\tau(\tau) \norm{x^2 \overline\rho^{1/2} \eta_{\tau\tau} (\tau)}{\Lnorm{2}}^2\,d\tau \\
			& ~~~~ + \int_0^T \alpha^{r_2+2}(\tau) \norm{x\bigl\lbrack (1+\eta) x\eta_{x\tau\tau}(\tau) - x\eta_x \eta_{\tau\tau}(\tau) \bigr\rbrack}{\Lnorm{2}}^2  \,d\tau \\
			& ~~~~ + \int_0^T \alpha^{-l_1 - 1}(\tau) \alpha_\tau(\tau) \norm{x \zeta(\tau)}{\Lnorm{2}}^2\,d\tau \\
			& ~~~~ + \int_0^T \alpha^{-l_2 - 1}(\tau) \alpha_\tau (\tau) \norm{x \zeta_\tau(\tau)}{\Lnorm{2}}^2\,d\tau.
	\end{aligned}
\end{equation}
To define the interior energy and dissipation functionals, let the interior cut-off function be
\begin{equation}\label{def:cut-off}
	\chi(x) := \begin{cases}
		1 & 0 \leq x\leq 1/2, \\
		0 & 3/4 \leq x \leq 1,
	\end{cases}
\end{equation}
with $ - 8 \leq \chi' \leq 0 $. Then the interior energy and dissipation functionals are given by
\begin{equation}\label{def:functional-02}
	\begin{aligned}
		& \mathcal E_{1}(T) := \sup_{0\leq \tau \leq T} \biggl\lbrace 
		\alpha^{-l_3}(\tau) \norm{\chi^{1/2} \zeta(\tau)}{\Lnorm{2}}^2 + \alpha^{-r_4}(\tau) \norm{\chi^{1/2} x \overline \rho^{1/2} \eta_{\tau\tau}(\tau)}{\Lnorm{2}}^2 \\
		& ~~~~ + \alpha^{-l_4}(\tau) \norm{\chi^{1/2} \zeta_\tau(\tau)}{\Lnorm{2}}^2
		\biggr\rbrace, \\
		& \mathcal D_{1}(T) := \int_0^T \alpha^{-l_3 - 1}(\tau) \alpha_\tau(\tau) \norm{\chi^{1/2} \zeta(\tau)}{\Lnorm{2}}^2 \,d\tau \\
		& ~~~~ + \int_0^T \alpha^{r_3}(\tau) \bigl( \norm{\chi^{1/2} \eta_\tau(\tau)}{\Lnorm{2}}^2 + \norm{\chi^{1/2} x\eta_{x\tau}(\tau)}{\Lnorm{2}}^2 \bigr) \,d\tau \\
		& ~~~~ + \int_0^T \alpha^{-r_4-1}(\tau) \alpha_\tau(\tau) \norm{\chi^{1/2} x \overline \rho^{1/2} \eta_{\tau\tau}(\tau)}{\Lnorm{2}}^2 \,d\tau  \\
		& ~~~~ + \int_0^T \alpha^{2-r_4}(\tau) \bigl( \norm{\chi^{1/2} \eta_{\tau\tau}(\tau)}{\Lnorm{2}}^2 + \norm{\chi^{1/2} x\eta_{x\tau\tau}(\tau)}{\Lnorm{2}}^2 \bigr) \,d\tau \\
		& ~~~~ + \int_0^T \alpha^{-l_4-1}(\tau) \alpha_\tau(\tau) \norm{\chi^{1/2} \zeta_\tau(\tau)}{\Lnorm{2}}^2 \,d\tau.
	\end{aligned}
\end{equation}

Correspondingly, we denote the initial energy as
\begin{equation}\label{def:initial-energy}
	\begin{aligned}
		& \mathcal E_{in}:= \norm{x^2 \overline \rho^{1/2} \eta_1}{\Lnorm{2}}^2 + \norm{x\zeta_0}{\Lnorm{2}}^2 + \norm{x^2 \overline\rho^{1/2}\eta_2}{\Lnorm{2}}^2 \\
		& ~~~~ + \norm{x\zeta_1}{\Lnorm{2}}^2 + \norm{x^2 \overline\rho^{1/2}\eta_0}{\Lnorm{2}}^2 + \norm{\chi^{1/2} \zeta_0}{\Lnorm{2}}^2 + \norm{\chi^{1/2} x\overline\rho^{1/2} \eta_{2}}{\Lnorm{2}} \\
		& ~~~~ + \norm{\chi^{1/2} \zeta_1}{\Lnorm{2}}^2. 
	\end{aligned}
\end{equation}
In terms of the energy functionals defined in \eqref{def:functional-01} and \eqref{def:functional-02}, we can rephrase our main theorem, i.e., Theorem \ref{thm:informal} as follows:
\begin{thm}\label{thm:full}
Let $ T > 0 $ be any positive constant, and 
\begin{equation}\label{thmfull:gamma-intervals}
		I_0 := (\dfrac{7}{6}, \infty), ~ I_1 := ( \dfrac{7}{6}, \dfrac{7}{3} ), ~ 
		I_2 := ( \dfrac{11}{9}, \dfrac{5}{3} ).
\end{equation}
{\par\noindent\bf Case 1:}
If $ \gamma \in I_0 $, provided that $ \omega $ in \eqref{initial:uniform-bound} and $ \mathcal E_{in} $ in \eqref{def:initial-energy} are small enough, there exists a global strong solution $ (\eta, \zeta) $ to system \eqref{eq:perturbation-0} with \eqref{bc:perturbation} and \eqref{initial:perturbation}, which satisfies
\begin{equation}\label{thmfull:regularity}
	\begin{gathered}
		\eta, \eta_\tau, \zeta \in L^\infty\bigl(0,T;H^1(0,1) \bigr), ~ x\eta, x\eta_\tau \in L^\infty\bigl(0,T;H^2(0,1) \bigr), \\
		x \overline\rho^{1/2} \eta_\tau, x \overline\rho^{1/2} \eta_{\tau\tau}, \zeta_\tau \in L^\infty\bigl( 0,T; L^2(0,1)\bigr),\\
		 x \eta_{x\tau}, x\eta_{x\tau\tau} \in L^2\bigl(0,T;L^2(0,1) \bigr),
	\end{gathered}
\end{equation}
and 
	\begin{equation}\label{thmest:pointwise-1}
	 \max \lbrace \norm{\eta}{\Lnorm{\infty}}, \norm{x\eta_x}{\Lnorm{\infty}}, \norm{\alpha^{\sigma_1} \eta_\tau}{\Lnorm{\infty}}, \norm{\alpha^{\sigma_1} x\eta_{x\tau}}{\Lnorm{\infty}}, \norm{\alpha^\sigma \mathfrak B}{\Lnorm{\infty}} \rbrace \leq \omega.
	\end{equation}
	Here $ \sigma = 1 $ and $ \sigma_1 $ satisfies \eqref{constraint-000}. 
Also, the following estimates hold
\begin{equation}\label{thmfull:estimates}
	\begin{gathered}
		 \mathcal E_0 (T) + \mathcal E_1(T) + \mathcal D_0(T) + \mathcal D_1(T) \leq C_{r_1,\sigma_1,\alpha_1} \mathcal E_{in} \\
	 ~~~~ + C_{r_1,\sigma_1,\alpha_1} \bigl( \norm{\eta_0}{\Lnorm{2}}^2 + \norm{x\eta_{0,x}}{\Lnorm{2}}^2 \bigr), \\
	\sup_{0\leq \tau \leq T} \bigl\lbrace \norm{x\lbrack (1+\eta(\tau))x\eta_{x\tau}(\tau) - x\eta_{x}(\tau) \eta_\tau(\tau)\rbrack}{\Lnorm{2}}^2 + \norm{\chi^{1/2} \eta_\tau(\tau)}{\Lnorm{2}}^2 \\
	~~~~ + \norm{\chi^{1/2}x \eta_{x\tau}(\tau)}{\Lnorm{2}}^2 
	 + \norm{\chi^{1/2} \eta(\tau)}{\Lnorm{2}}^2 + \norm{\chi^{1/2}x \eta_{x}(\tau)}{\Lnorm{2}}^2 \\
	~~~~ + \norm{\eta_{x}(\tau)}{\Lnorm{2}}^2 
	 +  \norm{x\eta_{xx}(\tau)}{\Lnorm{2}}^2 +  \norm{\eta_{x\tau}(\tau)}{\Lnorm{2}}^2 + \norm{x\eta_{xx\tau}(\tau)}{\Lnorm{2}}^2 \bigr\rbrace \\
	~~~~ \leq C_{r_1,\sigma_1,\alpha_1} \mathcal E_{in}  + C_{r_1,\sigma_1,\alpha_1} \bigl(\norm{\eta_0}{\Lnorm{\infty}}^2 + \norm{x\eta_{0,x}}{\Lnorm{\infty}}^2 \\
	~~~~ + \norm{\zeta_{0,x}}{\Lnorm{2}}^2 + \norm{\eta_{0,x}}{\Lnorm{2}}^2 + \norm{x\eta_{0,x}}{\Lnorm{2}}^2 \bigr), \\
	\sup_{0\leq \tau \leq T}\norm{\zeta_x(\tau)}{\Lnorm{2}}^2 \leq C_{r_1, r_2, \alpha_1} \bigl\lbrace \alpha^{3\gamma-1} \mathcal E_{in} + \norm{\eta_0}{\Lnorm{\infty}}^2 + \norm{x\eta_{0,x}}{\Lnorm{\infty}}^2 \\
	~~~~ + \norm{\zeta_{0,x}}{\Lnorm{2}}^2 + \norm{\eta_{0,x}}{\Lnorm{2}}^2 + \norm{x\eta_{0,x}}{\Lnorm{2}}^2 \bigr\rbrace. 
	\end{gathered}
\end{equation}
{\par\noindent\bf Case 2:}
If, in addition, $ \gamma \in I_1 \subset I_0 $ and
$$
\max\lbrace 3\gamma - 3 - r_1, 2-r_1 \rbrace < 2 \sigma_1 < r_1 < \min\lbrace 6\gamma - 6, 2 \rbrace,
$$
we have, $ \exists \varepsilon_0 \in (0,1) $, 
\begin{equation}\label{thmfull:estimates-1}
	\begin{aligned}
	& \sup_{0\leq\tau\leq T} \norm{\dfrac{\zeta(\tau)}{\overline p}}{\Lnorm{\infty}(\varepsilon_0,1)} + \alpha^{3-3\gamma}\norm{\dfrac{\zeta}{\overline p}}{\Lnorm{\infty}}  \leq C_{r_1,\sigma_1,\alpha_1} \biggl\lbrace  \norm{q_0}{\Lnorm{\infty}} + \norm{\eta_0}{\Lnorm{\infty}} \\
	& ~~~~ + \norm{x\eta_{0,x}}{\Lnorm{\infty}} + \norm{q_0}{\Lnorm{\infty}}^2 
	 + \norm{\eta_0}{\Lnorm{\infty}}^2 
	 + \norm{x\eta_{0,x}}{\Lnorm{\infty}}^2 
	+  \mathcal E_{in}^{1/2}+\mathcal E_{in} \biggr\rbrace,
	\end{aligned}
\end{equation}
provided that $ \norm{q_0}{\Lnorm{\infty}} $ is small enough. 
{\par\noindent\bf Case 3:}
Moreover, if $ \gamma \in I_2 \subset I_1 \subset I_0 $ and
$$
\max\lbrace 3\gamma - 1 - r_1, 2-r_1 \rbrace < 2 \sigma_1 < r_1 < \min\lbrace 6\gamma - 6, 2 \rbrace,
$$
we have,
\begin{equation}\label{thmest:pointwise-q-1}
	\begin{aligned}
	& \sup_{0\leq\tau\leq T} \norm{\dfrac{\zeta(\tau)}{\overline p}}{\Lnorm{\infty}} \leq C_{r_1,\sigma_1,\alpha_1} \biggl\lbrace  \norm{q_0}{\Lnorm{\infty}} + \norm{\eta_0}{\Lnorm{\infty}} 
	 + \norm{x\eta_{0,x}}{\Lnorm{\infty}} \\
	 & ~~~~ + \norm{q_0}{\Lnorm{\infty}}^2 
	 + \norm{\eta_0}{\Lnorm{\infty}}^2 
	 + \norm{x\eta_{0,x}}{\Lnorm{\infty}}^2 
	+  \mathcal E_{in}^{1/2}+\mathcal E_{in} \biggr\rbrace,
	\end{aligned}
\end{equation}
provided that $ \norm{q_0}{\Lnorm{\infty}} $ is small enough. Recall the definition in \eqref{def:weighted-q}, \eqref{thmest:pointwise-q-1} implies that $$ \norm{q}{L^\infty(0,T;L^\infty(0,1))} < C_{r_1, \sigma_1, \alpha_1, \mathcal E_{in}, \omega}, $$ independent of $ T $.
\end{thm}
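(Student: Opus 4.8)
\emph{Overall strategy.} The plan is a continuity (bootstrap) argument built on a local existence theorem. First I would establish local-in-time existence and uniqueness of a strong solution $(\eta,\zeta)$ to \eqref{eq:perturbation-0}--\eqref{bc:perturbation} with data \eqref{initial:perturbation} in the class \eqref{thmfull:regularity} on a maximal interval $[0,T^*)$, via a linearization/iteration scheme: given $\eta$, the transport equation \subeqref{eq:perturbation-0}{2} produces $\zeta$ explicitly (and, with $\overline p|_{x=1}=0$ and \eqref{bc:perturbation}, forces $\zeta|_{x=1}=0$), while \subeqref{eq:perturbation-0}{1} is a degenerate viscous wave equation for $\eta$ whose solvability in the weighted spaces of \eqref{thmfull:regularity} rests on the positivity of the shear-viscosity dissipation of $\mathfrak B$ together with the damping structure exploited in \cite{Hadzic2016a,XL2018a}. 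Then, on any $[0,T]\subset[0,T^*)$, I impose the a priori assumptions that $\norm{\eta}{\Lnorm{\infty}},\norm{x\eta_x}{\Lnorm{\infty}}$ are small and that $\alpha^{\sigma_1}\eta_\tau,\alpha^{\sigma_1}x\eta_{x\tau},\alpha\mathfrak B$ are bounded (by, say, $2\omega$); under these, the nonlinear coefficients $(1+\eta)^{-1},(1+\eta+x\eta_x)^{-1},[(1+\eta+x\eta_x)(1+\eta)^2]^\gamma$ are all comparable to $1$, so \eqref{eq:perturbation-0} is a controlled perturbation of a linear damped viscous wave equation coupled to a transport equation, and everything reduces to recovering these bounds with $2\omega$ improved to $\omega$, uniformly in $T$.

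\emph{Energy estimates.} In section \ref{sec:energy-estimate} I would carry out the global estimate: test \subeqref{eq:perturbation-0}{1} with $\alpha^{r_1}\eta_\tau$ — and, after differentiating the system once in $\tau$, with $\alpha^{r_2}\eta_{\tau\tau}$ — integrate over $(0,1)$, and integrate by parts in the viscous term $\tfrac43\mu\alpha^3\mathfrak B_x+4\mu\alpha^3(\eta_\tau/(1+\eta))_x$; using $\mathfrak B|_{x=1}=0$ and the algebraic identity relating $(\eta_\tau/(1+\eta))_x$ to $\mathfrak B$ (the manipulations flagged as \eqref{ene:103}, \eqref{ene:105}) yields the dissipation $\alpha^{r_1+2}\norm{x[(1+\eta)x\eta_{x\tau}-x\eta_x\eta_\tau]}{\Lnorm{2}}^2$ in $\mathcal D_0$. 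The pressure term $\alpha^{4-3\gamma}(\zeta/[\cdots]^\gamma)_x$ is cancelled against a weighted multiple $\alpha^{-l_1}\zeta/\overline p$ of \subeqref{eq:perturbation-0}{2}, which is exactly why $l_1=6\gamma-6-r_1$ and, at the once-differentiated level, $l_2=6\gamma-6-r_2$ with $r_2=r_1+2\sigma_1-2$; every remaining exponent in \eqref{indices} is forced by balancing powers of $\alpha$ among viscous, pressure, inertial and source contributions. The dangerous term is the entropy source $\tfrac43\mu(\gamma-1)\alpha^{3\gamma-1}[\cdots]^\gamma\mathfrak B^2$: using $|\mathfrak B|\lesssim\alpha^{-1}$ from the a priori assumption converts its contribution to $\lesssim\alpha^{3\gamma-3}\times(\text{energy})$, whose time-integrability against the remaining weights is precisely the constraint $r_1<6\gamma-6$ in \eqref{constraint-000}. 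Section \ref{sec:interior} localizes with the cut-off $\chi$ of \eqref{def:cut-off} to upgrade the degenerate $x^2\overline\rho^{1/2}$-weighted controls to genuine $L^2$ control of $\chi^{1/2}\eta_\tau,\chi^{1/2}x\eta_{x\tau},\chi^{1/2}\eta_{\tau\tau},\chi^{1/2}x\eta_{x\tau\tau}$; the commutators carrying $\chi'$ live in $(1/2,3/4)$, where $\overline\rho$ is bounded below, and are absorbed by $\mathcal D_0$.

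\emph{Point-wise estimates and regularity.} This is the crux. From $\mathcal E_0+\mathcal E_1$ I must recover \eqref{thmest:pointwise-1}. In the interior, a weighted $H^1\hookrightarrow L^\infty$ embedding applied to the $\mathcal E_1$-controls gives $\norm{\alpha^{\sigma_1}\eta_\tau}{\Lnorm{\infty}(0,1/2)}$ and $\norm{\alpha^{\sigma_1}x\eta_{x\tau}}{\Lnorm{\infty}(0,1/2)}$ small. Near $x=1$ I use \eqref{ana:001}: integrating \subeqref{eq:perturbation-0}{1} from $x$ to $1$ expresses $\alpha^3\mathfrak B$ in terms of integrals of $x^2\overline\rho^{1/2}\eta_{\tau\tau},x^2\overline\rho^{1/2}\eta_\tau,x^2\overline\rho^{1/2}\eta$ (controlled by $\mathcal E_0$ with the right $\alpha$-weights) and of $\overline p q=\zeta/[\cdots]^\gamma$; since $\int_x^1\overline\rho\,dx\lesssim\overline p(x)$ near $x=1$ (from \eqref{eq:self-similar-sol}), this yields a point-wise bound on $\mathfrak B^2/\overline p$, hence on $\alpha\mathfrak B$, and then on $\eta_\tau,x\eta_{x\tau}$ via the ODE relations linking them to $\mathfrak B$ and the energy control of $x^2\overline\rho^{1/2}\eta_\tau$. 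Finally $\eta$ and $x\eta_x$ are recovered by time-integrating $\eta_\tau,x\eta_{x\tau}$ from the data: since $\alpha\simeq e^{\alpha_1\tau}$ by \eqref{expandingrate} and $\sigma_1>0$, one has $\int_0^\tau|\eta_\tau|\,d\sigma\lesssim\omega\int_0^\infty\alpha^{-\sigma_1}\,d\sigma\lesssim\omega/\alpha_1$, small when $\alpha_1$ is large, so $\eta$ stays within $O(\omega+\mathcal E_{in}^{1/2})$ of $\eta_0$. Choosing $\omega,\mathcal E_{in}$ small improves $2\omega\to\omega$, closes the bootstrap, and extends the solution globally; the regularity in \eqref{thmfull:regularity} and the derivative bounds in \eqref{thmfull:estimates} then follow in section \ref{sec:regularity}, with the $L^2$ bounds on $\eta_{x\tau}$ and $x\eta_{xx\tau}$ obtained via the relative-entropy method of \cite{LuoXinZeng2016}.

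\emph{Cases 2--3 and the main obstacle.} With \eqref{thmest:pointwise-1} available globally, I revisit the entropy identity \eqref{eq:entropy}: $\frac{d}{d\tau}\{(1+q)[\cdots]^\gamma\}$ equals $\tfrac43\mu(\gamma-1)[\cdots]^\gamma\alpha^{3\gamma-1}\mathfrak B^2/\overline p$, whose right side, using \eqref{ana:001} near $x=1$, is $\lesssim\alpha^{3\gamma-3}\times(\text{decaying energy})$. Integrating in $\tau$ against the decay recorded in $\mathcal D_0,\mathcal D_1$ gives a finite, $T$-independent bound on the growth of $q$ away from $x=0$ exactly when $\gamma<7/3$ (Case 2, \eqref{thmfull:estimates-1}, with an $\alpha^{3-3\gamma}$ prefactor for the full-domain norm), via the continuity argument on the Riccati-type inequality $\frac{d}{d\tau}q\le\alpha^{1-3\gamma}q^2+A$ with $A$ integrable; the sharper constraint $3\gamma-1-r_1<2\sigma_1$, available only for $\gamma<5/3$, upgrades this to a uniform bound on $\zeta/\overline p$ over the whole domain (Case 3, \eqref{thmest:pointwise-q-1}), hence on $q$. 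The lower endpoints are forced by nonemptiness of the parameter window: $2-r_1<2\sigma_1<r_1<6\gamma-6$ already requires $\gamma>7/6$, and adding $2\sigma_1>3\gamma-1-r_1$ forces $\gamma>11/9$. I expect the main obstacle to be the interlocking of the point-wise step with the $\zeta$-bookkeeping: $\zeta=\overline p q[\cdots]^\gamma$ is controlled only in the degenerate weighted norm $\alpha^{-l_1}\norm{x\zeta}{\Lnorm{2}}^2$, and turning this into a point-wise bound on $\zeta/\overline p$ near the free boundary — while the coefficient $\alpha^{3\gamma-1}$ multiplying $\mathfrak B^2$ in the entropy source keeps growing — requires all the exponents in \eqref{indices} to simultaneously satisfy the inequalities dictated by the viscous, pressure and source terms; checking the consistency of \eqref{constraint-000}, and of its strengthenings in Cases 2--3, is the delicate accounting on which the whole argument turns.
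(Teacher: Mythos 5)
Your proposal follows essentially the same route as the paper: a continuity argument assembled from the weighted energy estimates (Proposition \ref{prop:energy-estimates}), the interior estimates with the cut-off $\chi$ (Proposition \ref{prop:interior-estimates}), the point-wise estimates obtained by integrating \subeqref{eq:perturbation-0}{1} in $x$ to close the a priori assumption \eqref{def:a-priori-assm} (Proposition \ref{prop:point-wise-0}), the Riccati-type continuity argument for $\zeta/\overline p$ near the boundary giving Cases 2--3 (Proposition \ref{prop:boundedness-entropy}), and the relative-entropy regularity estimates (Proposition \ref{prop:regularity}); your index bookkeeping, including deriving $\gamma>7/6$, $\gamma<7/3$ and $11/9<\gamma<5/3$ from nonemptiness of the constraint windows, matches \eqref{constraint-000}, \eqref{constraint-001}, \eqref{constraint-002}. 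The only cosmetic deviations are that the paper gets local well-posedness by a finite-difference scheme rather than iteration, and that the $L^\infty$ control of $\eta_\tau, x\eta_{x\tau}$ (interior included) comes from the integrated momentum identity \eqref{ene:105}, valid on all of $(0,1)$, rather than from a weighted embedding of $\mathcal E_1$, which by itself does not control $\partial_x\eta_\tau$.
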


\begin{proof}[Proof of Theorem \ref{thm:full}]
The estimates \eqref{thmest:pointwise-1} and \eqref{thmfull:estimates} follow from Proposition \ref{prop:total-functional}, Proposition \ref{prop:point-wise-0}, Proposition \ref{prop:regularity}, with an application of continuity arguments. The regularity in \eqref{thmfull:regularity} follows directly from the estimates \eqref{thmest:pointwise-1}, \eqref{thmfull:estimates}, the Sobolev embedding inequality, and Hardy's inequality. \eqref{thmfull:estimates-1} and \eqref{thmest:pointwise-q-1} follow from Proposition \ref{prop:boundedness-entropy}.	
\end{proof}

\begin{remark} The local well-posedness of system \eqref{eq:perturbation-0} can be obtained following a similar finite difference method as in the appendix of \cite{LuoXinZeng2016}. This is due to the fact that 
the additional equation for the pressure \subeqref{eq:perturbation-0}{2} does not have a degenerate weight on $ \zeta $. 
\end{remark}


Now we introduce some notations and inequalities which we will frequently use in this work. Without further mentioned, we use
\begin{equation*}
	\int \,dx = \int_0^1 \,dx, ~ \int\, d\tau = \int_0^T \,d\tau,
\end{equation*}
for any $ T \in (0,\infty) $. $ \norm{\cdot}{\Lnorm{\iota}} $ denotes the $ L^\iota $ norm in the $ x $-variable, $ \iota \in [1,\infty] $. 
We use the notion $ A \lesssim B $ to represent $ A \leq CB $ for some positive generic constant $ C $, which might be different from line to line. We say $ A \simeq B $ if $ A \lesssim B $ and $ B \lesssim A $. 

The following form of Hardy's inequality will be employed in this work.

\begin{lm}[Hardy's inequality, \cite{Jang2014}]
	Let $ k $ be a given real number, and let $ g $ be a function satisfying $ \int_0^1 s^k (g^2 + g'^2)\,ds < \infty $.
	\begin{enumerate}
		\item If $ k > 1$, then we have
		\begin{equation*}
			\int_0^1 s^{k-2}g^2 \,ds \leq C \int_0^1 s^k (g^2+g'^2 )\,ds.
		\end{equation*}
		\item If $ k < 1 $, then $ g $ has a trace at $ x=0 $ and moreover
		\begin{equation*}
			\int_0^1 s^{k-2}(g-g(0))^2\,ds\leq C\int_0^1 s^k g'^2 \,ds.
		\end{equation*}
	\end{enumerate}
\end{lm}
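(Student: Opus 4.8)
The plan is to prove both parts by the classical weighted integration-by-parts argument, localized away from the singular endpoint and then passed to the limit. In both cases the algebraic engine is the identity $s^{k-2}=\frac{1}{k-1}\,\frac{d}{ds}\bigl(s^{k-1}\bigr)$, valid for $k\neq 1$, which lets one trade a power of $s$ for a derivative landing on $g^2$. Since we do not assume a priori that $\int_0^1 s^{k-2}g^2\,ds$ is finite, I would carry out the computation on $[\varepsilon,1]$ first and let $\varepsilon\to0$ by monotone convergence at the end.

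\textbf{Case $k>1$.} Integrating by parts on $[\varepsilon,1]$,
\[
\int_\varepsilon^1 s^{k-2}g^2\,ds=\frac{1}{k-1}\Bigl[s^{k-1}g^2\Bigr]_\varepsilon^1-\frac{2}{k-1}\int_\varepsilon^1 s^{k-1}gg'\,ds .
\]
Because $k>1$, the lower endpoint contributes $-\frac{1}{k-1}\varepsilon^{k-1}g(\varepsilon)^2\le0$, which has a favorable sign and may be discarded to obtain an upper bound uniform in $\varepsilon$. The upper endpoint contributes $\frac{1}{k-1}g(1)^2$; this I would absorb by a one-dimensional trace estimate, integrating $g(1)^2=g(s)^2+\int_s^1 2gg'\,dr$ over $s\in[\tfrac12,1]$ and using that $s^k\simeq1$ there, so that $g(1)^2\le C\int_0^1 s^k(g^2+g'^2)\,ds$. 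For the cross term I would write $s^{k-1}gg'=\bigl(s^{(k-2)/2}g\bigr)\bigl(s^{k/2}g'\bigr)$, apply Cauchy--Schwarz, then Young's inequality with a small parameter to absorb $\int_\varepsilon^1 s^{k-2}g^2\,ds$ into the left-hand side; letting $\varepsilon\to0$ finishes this case.

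\textbf{Case $k<1$.} First I would establish the trace. For $0<s<t$, Cauchy--Schwarz gives $|g(t)-g(s)|\le\bigl(\int_s^t r^{-k}\,dr\bigr)^{1/2}\bigl(\int_s^t r^{k}g'^2\,dr\bigr)^{1/2}$, and since $k<1$ the weight $r^{-k}$ is integrable near $0$, so $\int_s^t r^{-k}\,dr\to0$ as $s,t\to0^+$; hence $g(s)$ is Cauchy along $s\to0^+$, so $g(0):=\lim_{s\to0^+}g(s)$ exists. Replacing $g$ by $g-g(0)$ leaves $g'$ unchanged, so I may assume $g(0)=0$. Repeating the integration by parts on $[\varepsilon,1]$, the upper endpoint term $\frac{1}{k-1}g(1)^2\le0$ is again discarded, but the lower endpoint term $-\frac{1}{k-1}\varepsilon^{k-1}g(\varepsilon)^2$ now has the \emph{wrong} sign. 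Here I use that, since $g(0)=0$, $g(\varepsilon)^2=\bigl(\int_0^\varepsilon g'\,dr\bigr)^2\le\frac{\varepsilon^{1-k}}{1-k}\int_0^\varepsilon r^{k}g'^2\,dr$, so $\varepsilon^{k-1}g(\varepsilon)^2\le\frac{1}{1-k}\int_0^\varepsilon r^{k}g'^2\,dr\to0$ as $\varepsilon\to0$ by absolute continuity of the integral. The cross term is treated by Cauchy--Schwarz and Young exactly as before, and $\varepsilon\to0$ yields the claim with, say, $C=4/(k-1)^2$.

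The main obstacle is the bookkeeping at the singular endpoint. The key structural point is the sign of the endpoint term $-\frac{1}{k-1}\varepsilon^{k-1}g(\varepsilon)^2$: it is harmless for $k>1$ because it can simply be thrown away, whereas for $k<1$ it has the unfavorable sign and only goes to zero because subtracting the trace upgrades $g(\varepsilon)$ to an $O\bigl(\varepsilon^{(1-k)/2}\bigr)$ quantity. Everything else --- the trace estimate at $s=1$, the Cauchy--Schwarz split, the Young absorption --- is routine.
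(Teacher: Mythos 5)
Your proof is correct. The paper does not prove this lemma at all---it is quoted verbatim from \cite{Jang2014}---so there is no in-paper argument to compare against; your integration-by-parts proof (trading $s^{k-2}$ for $\tfrac{1}{k-1}(s^{k-1})'$, discarding the favorably signed endpoint term, absorbing the cross term by Cauchy--Schwarz and Young on $[\varepsilon,1]$, and in the case $k<1$ using the trace subtraction to kill the unfavorable endpoint term before letting $\varepsilon\to0$) is the standard and complete way to establish both parts.
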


In this work, inspired by \cite{LuoXinZeng2016}, let us define the relative entropy functional as 
\begin{equation}\label{def:relative-entropy-functional}
	\mathfrak H(h) : = \log(1+h)^2(1+h+xh_x),
\end{equation}
where $ h: (0,1)\times(0,T) \mapsto \mathbb R $ is any smooth function. 
We will have the following estimates on the relative entropy.
\begin{lm}[Relative entropy, \cite{XL2018a}]\label{lm:estimates-of-relative-entropy}
	For $ h $ satisfying, 
	\begin{equation}\label{lm:s003} 
		\max \lbrace \norm{h}{\Lnorm{\infty}}, \norm{xh_x}{\Lnorm{\infty}}, \norm{h_\tau}{\Lnorm{\infty}},  \norm{xh_{x\tau}}{\Lnorm{\infty}} 
		\rbrace < \varepsilon,
	\end{equation} 
	with some $ 0 < \varepsilon  < 1 $ small enough, the following estimates of the function $ \mathfrak H (h) $ hold:
	\begin{gather}
		\int ( h_x^2 + x^2 h_{xx}^2 ) \,dx  \leq C \int \bigl( \mathfrak H(h) \bigr)_x^2 \,dx ,  \label{lm:s001}  \\
		\int (h_{x\tau}^2 + x^2 h_{xx\tau}^2) \,dx \leq C \int \bigl(\mathfrak H(h) \bigr)_{x\tau}^2 \,dx + \varepsilon C \int (h_x^2 + x^2 h_{xx}^2 )\,dx, \label{lm:s002}
	\end{gather}
	for some positive constant $ C $.
\end{lm}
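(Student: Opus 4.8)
\textbf{Proof proposal for Lemma~\ref{lm:estimates-of-relative-entropy}.} The result is quoted from \cite{XL2018a}; the plan is to reproduce its proof, whose core observation is that $ \bigl(\mathfrak H(h)\bigr)_x $ is a degenerate first--order elliptic operator applied to $ h_x $ which can be inverted explicitly by integrating out from the centre $ x=0 $. Set $ a := 1+h $ and $ b := 1+h+xh_x $; the smallness \eqref{lm:s003} together with $ \varepsilon<1 $ forces $ a,b\in[1-2\varepsilon,1+2\varepsilon] $, so both are comparable to $ 1 $. Differentiating \eqref{def:relative-entropy-functional} and multiplying by $ b $, one obtains the pointwise identity
\begin{equation*}
	x h_{xx} + c(x)\, h_x = b\,\bigl(\mathfrak H(h)\bigr)_x =: f, \qquad c(x) := \frac{2b}{a}+2 ,
\end{equation*}
where $ c $ is comparable to $ 4 $ and, since $ c(x)-4 = 2x h_x/a $, the function $ (c-4)/x = 2h_x/a $ is locally bounded away from $ x=0 $ and is controlled by $ C\varepsilon/x $ near the origin (only $ \|x h_x\|_{L^\infty} $, not $ \|h_x\|_{L^\infty} $, being assumed small).

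For \eqref{lm:s001} I would read the displayed identity as a linear ODE for $ h_x $ with integrating factor $ I(x):=\exp\bigl(\int_{1/2}^{x} c(s)/s\,ds\bigr) $. The remark above shows that $ I(x) $ is comparable to $ x^4 $ up to a factor $ x^{\pm C\varepsilon} $, a harmless power correction. Since $ (I h_x)' = I f/x $ and $ I(x) h_x(x)\to 0 $ as $ x\to 0^+ $ (because $ h $ is smooth, so $ h_x $ is bounded near $ 0 $ while $ I(x)\to 0 $), integration from $ 0 $ gives $ h_x(x) = I(x)^{-1}\int_0^x I(s) f(s)/s\,ds $; as $ I(s)/s\simeq s^{3\pm C\varepsilon} $, the Hardy inequality recalled above (case $ k>1 $) then yields $ \int_0^1 h_x^2\,dx \le C\int_0^1 f^2\,dx \le C\int_0^1 \bigl(\mathfrak H(h)\bigr)_x^2\,dx $, with the $ \varepsilon $--dependent exponent shifts cancelling between the two occurrences of $ I $. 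Finally $ x h_{xx} = f - c\, h_x $ gives $ \int x^2 h_{xx}^2\,dx \le C\int f^2\,dx + C\int h_x^2\,dx $, establishing \eqref{lm:s001}.

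For \eqref{lm:s002} I would apply $ \partial_\tau $ to the identity above, obtaining
\begin{equation*}
	x h_{xx\tau} + c(x)\, h_{x\tau} = b\,\bigl(\mathfrak H(h)\bigr)_{x\tau} + b_\tau\,\bigl(\mathfrak H(h)\bigr)_x - c_\tau\, h_x =: \tilde f ,
\end{equation*}
where $ a_\tau = h_\tau $ and $ b_\tau = h_\tau + x h_{x\tau} $ are $ O(\varepsilon) $ by \eqref{lm:s003}, hence so is $ c_\tau $; plugging in the formula for $ \bigl(\mathfrak H(h)\bigr)_x $ gives the pointwise bound $ \tilde f^2 \le C\bigl(\mathfrak H(h)\bigr)_{x\tau}^2 + C\varepsilon^2\bigl(h_x^2 + x^2 h_{xx}^2\bigr) $. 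Running the ODE inversion of the previous step with $ \tilde f $ in place of $ f $ (the integrating factor is the same, and $ I(x) h_{x\tau}(x)\to 0 $ at $ x=0 $ again by smoothness), and then using $ x h_{xx\tau} = \tilde f - c\, h_{x\tau} $, yields $ \int\bigl(h_{x\tau}^2 + x^2 h_{xx\tau}^2\bigr)\,dx \le C\int\bigl(\mathfrak H(h)\bigr)_{x\tau}^2\,dx + C\varepsilon^2\int\bigl(h_x^2 + x^2 h_{xx}^2\bigr)\,dx $, which gives \eqref{lm:s002} since $ \varepsilon^2\le\varepsilon $. The main obstacle is the bookkeeping around the integrating factor: one must show $ I $ stays comparable to $ x^4 $ up to an arbitrarily small power --- this is precisely where \eqref{lm:s003} enters, via the logarithmic control $ \bigl|\int_x^{1/2} h_x/a\,ds\bigr|\le C\varepsilon|\ln x| $ coming from $ \|x h_x\|_{L^\infty}<\varepsilon $ --- and, in the $ \tau $--differentiated estimate, one must check that every error term genuinely carries a factor $ a_\tau $ or $ b_\tau $, hence a factor $ \varepsilon $, so that it is absorbable into the last term of \eqref{lm:s002} rather than competing with the left-hand side.
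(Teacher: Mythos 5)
The paper does not prove this lemma; it is quoted from \cite{XL2018a}, so there is no internal proof to compare against, and your reconstruction stands on its own. It is essentially correct and is the natural argument: with $a=1+h$, $b=1+h+xh_x$ the identity $x h_{xx}+\bigl(2+\tfrac{2b}{a}\bigr)h_x=b\,\bigl(\mathfrak H(h)\bigr)_x$ is right, \eqref{lm:s003} keeps $a,b$ uniformly comparable to $1$, the ODE inversion from $x=0$ gives \eqref{lm:s001}, and in the $\tau$-differentiated identity every extra term indeed carries $a_\tau$, $b_\tau$ or $c_\tau$, hence a factor $\varepsilon$, which yields \eqref{lm:s002}. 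Two refinements are worth recording. First, the integrating-factor bookkeeping you worry about is unnecessary: $\int 2h_s/(1+h)\,ds=2\log(1+h)$ exactly, so $I(x)=\mathrm{const}\cdot x^4(1+h(x))^2$, i.e. the identity can be written in closed form as $\bigl(x^4(1+h)^2h_x\bigr)_x=x^3(1+h)^2(1+h+xh_x)\,\bigl(\mathfrak H(h)\bigr)_x$, and since $(1+h)^2\simeq 1$ no $x^{\pm C\varepsilon}$ exponent corrections ever appear (your cruder logarithmic bound also works, but this is cleaner). Second, the step you attribute to ``the Hardy inequality recalled above'' is not literally the lemma stated in the paper, which bounds $\int s^{k-2}g^2\,ds$ by $\int s^k(g^2+g'^2)\,ds$; what you actually need is the averaging bound $\int_0^1\bigl(x^{-4}\int_0^x s^3F(s)\,ds\bigr)^2dx\lesssim\int_0^1F^2\,dx$, which follows in two lines from Cauchy--Schwarz in $s$ and Fubini, so justify it that way rather than by citation. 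Finally, note the boundary terms $I\,h_x$ and $I\,h_{x\tau}$ at $x=0$ vanish already from \eqref{lm:s003} alone, since $\|xh_x\|_{\Lnorm{\infty}},\|xh_{x\tau}\|_{\Lnorm{\infty}}<\varepsilon$ give $I\,h_x,\,I\,h_{x\tau}=O(x^3)$, so no smoothness at the origin needs to be invoked.
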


\section{Energy estimates}\label{sec:energy-estimate}

In the following, 
we make the a prior assumption: for some positive constant $ \omega \in (0,1) $, which is small enough, such that, for any $ \tau \in (0,T) $, 
\begin{equation}\label{def:a-priori-assm}
\max \lbrace \norm{\eta}{\Lnorm{\infty}}, \norm{x\eta_x}{\Lnorm{\infty}}, \norm{\alpha^{\sigma_1} \eta_\tau}{\Lnorm{\infty}}, \norm{\alpha^{\sigma_1} x\eta_{x\tau}}{\Lnorm{\infty}}, \norm{\alpha^\sigma \mathfrak B}{\Lnorm{\infty}} \rbrace \leq \omega,
\end{equation}
for some positive constant $ \sigma_1 \in (0,\sigma) $, where $ \sigma = 1 $. This a priori assumption will dramatically simplified the presentation of our proof. We want to point-out that in section \ref{sec:point-wise}, such a priori assumption is closed in the sense that the $ L^\infty $ norm listed in \eqref{def:a-priori-assm} can be bounded by the energy functionals. Thus, with a continuity argument, one can conclude the rigidity of the estimates and the a priori assumption. 

In this section, we are going to show the following:
\begin{prop}[Energy estimates]\label{prop:energy-estimates}
Consider a smooth enough solution $ (\eta, \zeta) $ to system \eqref{eq:perturbation-0}, satisfying \eqref{bc:perturbation} and \eqref{initial:perturbation}. Suppose that \eqref{def:a-priori-assm} is satisfied with $ \omega \in (0,1) $ small enough, and that the expanding rate $ \beta_1 = \alpha_1 $ in \eqref{expandingrate} is large enough, we have
\begin{equation}\label{propest:energy}
	\mathcal E_0 (T) + \mathcal D_0(T) \leq C_{r_1,\sigma_1,\beta_1,\beta_2} \mathcal E_{in},
\end{equation}
for any positive time $ T \in (0,\infty) $. Here $ C_{r_1,\sigma_1,\beta_1,\beta_2}$ is some constant depending on $ r_1, \sigma_1,\beta_1,\beta_2 $. 
\end{prop}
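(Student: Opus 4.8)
The plan is to prove \eqref{propest:energy} by a hierarchy of weighted $L^2$ energy estimates carried out directly on system \eqref{eq:perturbation-0}, with $\tau$-dependent weights matched to the powers of $\alpha$ appearing in $\mathcal E_0$ and $\mathcal D_0$ (see \eqref{def:functional-01}), exploiting the structural link between the momentum equation \subeqref{eq:perturbation-0}{1} and the transport equation \subeqref{eq:perturbation-0}{2} for $\zeta$.

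\textbf{Basic energy level.} Test \subeqref{eq:perturbation-0}{1} against $\alpha^{r_1-1}x^3\eta_\tau$ and integrate in $x$. Using $\alpha\partial_\tau^2\eta+\alpha_\tau\partial_\tau\eta=\partial_\tau(\alpha\partial_\tau\eta)$, the inertial term yields (modulo factors $(1+\eta)^{-2}\approx 1$ controlled by \eqref{def:a-priori-assm}) the time derivative of $\alpha^{r_1}\norm{x^2\overline\rho^{1/2}\eta_\tau}{\Lnorm 2}^2$ plus the weight-growth term $\simeq (2-r_1)\alpha^{r_1-1}\alpha_\tau\norm{x^2\overline\rho^{1/2}\eta_\tau}{\Lnorm 2}^2$, which is nonnegative since $r_1<2$ by \eqref{constraint-000} and is precisely the first term of $\mathcal D_0$. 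The two viscous terms combine, via $\mathfrak B|_{x=1}=0$ (\eqref{bc:perturbation}) and $(\eta_\tau/(1+\eta))_x=\tfrac{1+\eta+x\eta_x}{x(1+\eta)}\mathfrak B$, into the positive dissipation $\alpha^{r_1+2}\norm{x[(1+\eta)x\eta_{x\tau}-x\eta_x\eta_\tau]}{\Lnorm 2}^2\simeq\alpha^{r_1+2}\norm{x\mathfrak B}{\Lnorm 2}^2$ (recall \eqref{def:frak-B}). The linearized pressure term $-\alpha^{4-3\gamma}\delta x\overline\rho\,\eta/(1+\eta)$, after $\eta\eta_\tau\approx\tfrac12\partial_\tau\eta^2$, produces the $\norm{x^2\overline\rho^{1/2}\eta}{\Lnorm 2}^2$ piece of $\mathcal E_0$. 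For the pressure unknown one tests \subeqref{eq:perturbation-0}{2} against $\alpha^{-l_1}x^2\zeta$; since $l_1=6\gamma-6-r_1>0$ this gives the time derivative of $\alpha^{-l_1}\norm{x\zeta}{\Lnorm 2}^2$ plus the dissipation $l_1\alpha^{-l_1-1}\alpha_\tau\norm{x\zeta}{\Lnorm 2}^2$, while the two linear transport terms in $\eta_\tau,x\eta_{x\tau}$ and the viscous source $\tfrac43\mu(\gamma-1)\alpha^{3\gamma-1}[(1+\eta+x\eta_x)(1+\eta)^2]^\gamma\mathfrak B^2$ are treated as errors carrying, respectively, the decay $\alpha^{-\sigma_1}$ and $\norm{\alpha^\sigma\mathfrak B}{\Lnorm\infty}^2\leq\omega^2$ from \eqref{def:a-priori-assm}, $\sigma=1$.

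\textbf{Differentiated level.} Differentiate \eqref{eq:perturbation-0} once in $\tau$ and repeat the scheme with weights $\alpha^{r_2-1}x^3\eta_{\tau\tau}$ and $\alpha^{-l_2}x^2\zeta_\tau$; since $r_2=r_1+2\sigma_1-2$ and $l_2=6\gamma-6-r_2$ this controls the $\eta_{\tau\tau}$ and $\zeta_\tau$ parts of $\mathcal E_0$, $\mathcal D_0$. The new ingredient is the appearance of commutators with $\alpha_\tau,\alpha_{\tau\tau}$; by \eqref{expandingrate}, \eqref{expandingrate-02} and $\beta_1\simeq\beta_2\simeq\alpha_1$ these are comparable to $\alpha$ or $\alpha^2$ times lower-level densities, hence absorbable, and the differentiated viscous source $\sim\alpha^{3\gamma-1}\mathfrak B\mathfrak B_\tau$ is handled using the dissipation of $\mathfrak B$ at this level.

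\textbf{Closure.} Summing the two levels and invoking \eqref{def:a-priori-assm}, every nonlinear remainder carries either a power of $\omega$ (from $\norm{\eta}{\Lnorm\infty},\norm{x\eta_x}{\Lnorm\infty}$) or a decaying power $\alpha^{-\sigma_1}$ or $\alpha^{-\sigma}$ (from $\norm{\alpha^{\sigma_1}\eta_\tau}{\Lnorm\infty},\norm{\alpha^{\sigma_1}x\eta_{x\tau}}{\Lnorm\infty},\norm{\alpha^\sigma\mathfrak B}{\Lnorm\infty}$); applying Young's inequality and Hardy's inequality (to handle the behavior of the degenerate weight $\overline\rho$ and the $x$-powers near $x=0$) one bounds these by $(\mathcal E_0+\mathcal D_0)(T)$ plus terms of the form $c\,\alpha^{m}\alpha_\tau\times(\text{dissipation density})$ with $c$ independent of $\alpha_1$ and $m$ strictly smaller than the matching exponent. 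Dividing by $\alpha_\tau/\alpha\geq\beta_1=\alpha_1$ and taking $\alpha_1$ large enough absorbs these into the genuine dissipation, and integrating in $\tau$ from $0$ together with \eqref{def:initial-energy} yields \eqref{propest:energy}. The main obstacle is the $\zeta$-equation: it is a pure transport equation with no dissipation of its own, while its viscous source grows as fast as $\alpha^{3\gamma-1}$, so $\zeta$ can be controlled only through the momentum coupling and the extra decay of $\mathfrak B$; making the $\alpha$-power bookkeeping close simultaneously at both differentiation levels is exactly what forces the algebraic relations \eqref{indices}, the admissible window \eqref{constraint-000}, and the restriction $\gamma>7/6$, and checking that throughout this window every error exponent is strictly dominated—so that $\alpha_1$ large finishes the argument—is the delicate core of the proof.
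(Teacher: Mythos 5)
Your overall architecture is the paper's: two levels of weighted estimates, testing \subeqref{eq:perturbation-0}{1} with $\alpha^{r_1-1}x^3(1+\eta)^2\eta_\tau$ and \subeqref{eq:perturbation-0}{2} with $\alpha^{-l_1}x^2\zeta$, then the $\tau$-differentiated system with $\alpha^{r_2-1}x^3\eta_{\tau\tau}$ and $\alpha^{-l_2}x^2\zeta_\tau$, closing with \eqref{def:a-priori-assm}, Hardy's inequality and $\beta_1$ large. However, there is a genuine gap in your treatment of the background force term. You claim that $-\alpha^{4-3\gamma}\delta x\overline\rho\,\eta/(1+\eta)$, after writing $\eta\eta_\tau\approx\tfrac12\partial_\tau\eta^2$, ``produces the $\|x^2\overline\rho^{1/2}\eta\|_{L^2}^2$ piece of $\mathcal E_0$''. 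It cannot: since $\delta>0$ this is an anti-restoring (destabilizing) term, so integrating by parts in $\tau$ puts $-\tfrac{\delta}{2}\alpha^{r_1+3-3\gamma}\int x^4\overline\rho\,\eta^2\,dx$ into the energy with the wrong sign, and moreover with an $\alpha$-weight that does not match the unweighted $\eta$-term in \eqref{def:functional-01}. The paper instead treats this term (its $L_1$) perturbatively: it bounds $\|x^2\overline\rho^{1/2}\eta\|_{L^2}$ by the fundamental theorem of calculus \eqref{est:eta}, uses Cauchy--Schwarz in $\tau$, and absorbs the result into the $\eta_\tau$-dissipation using $r_1<6\gamma-6$ (so the exponent $r_1+6-6\gamma$ is negative and time-integrable) together with $\beta_1^{-1}$ smallness; the $\eta$-component of $\mathcal E_0$ itself also comes from \eqref{est:eta} plus the dissipation, not from any potential-energy structure. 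As written, your sketch has no mechanism to control either this term or $\sup_\tau\|x^2\overline\rho^{1/2}\eta\|_{L^2}^2$.

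A second, more technical, flaw: in the $\zeta$-equation you propose to handle the viscous source by putting both factors of $\mathfrak B$ in $L^\infty$, i.e.\ using $\|\alpha^{\sigma}\mathfrak B\|_{L^\infty}^2\leq\omega^2$ with $\sigma=1$. This leaves an error of order $\omega^2\alpha^{3\gamma-l_1-3}\|x\zeta\|_{L^2}$, and Young's inequality against the weight-decay term $\alpha^{-l_1-1}\alpha_\tau\|x\zeta\|_{L^2}^2$ produces a remainder whose time integrand scales like $\alpha^{r_1}$ (recall $l_1=6\gamma-6-r_1$ in \eqref{indices}), which is not integrable for any admissible $r_1>1$, so this bookkeeping does not close. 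One must keep one factor of $\mathfrak B$ in $L^2$, identify it with $(1+\eta)x\eta_{x\tau}-x\eta_x\eta_\tau$ up to bounded factors, and pair it with the momentum dissipation $\alpha^{r_1+2}\|x[(1+\eta)x\eta_{x\tau}-x\eta_x\eta_\tau]\|_{L^2}^2$; the resulting exponent is exactly borderline (zero) and the smallness comes from the single remaining factor $\omega\alpha^{-\sigma}$, which is how the paper estimates its $L_4$ (and similarly $L_{13}$, $L_{14}$ at the differentiated level). With these two corrections the rest of your plan is consistent with the paper's proof.
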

We separate the proof of Proposition \ref{prop:energy-estimates} in the following two lemmas. We start with exploring the $ L^2 $ estimates of $ \eta_\tau $ and $ \zeta $. 

\begin{lm}\label{lm:L2}
Under the same assumptions as in Proposition \ref{prop:energy-estimates}, we have
\begin{equation}\label{lmest:L2}
	\begin{aligned}
		& \alpha^{r_1} \int x^4 \overline\rho \abs{\eta_\tau}{2} \,dx + \alpha^{-l_1} \int x^2 \abs{\zeta}{2} \,dx  + \int_0^\tau \alpha^{r_1-1}\alpha_\tau \int x^4 \overline\rho \abs{\eta_\tau}{2} \,dx \,d\tau' \\
		& ~~~~ + \int_0^\tau \alpha^{r_1+2} \int x^2 \bigl\lbrack (1+\eta) x\eta_{x\tau} - x\eta_x \eta_\tau \bigr\rbrack^2 \,dx\,d\tau' \\
		& ~~~~ + \int_0^\tau \alpha^{-l_1-1} \alpha_\tau \int x^2 \abs{\zeta}{2} \,dx\,d\tau'  \leq C_{r_1}  \omega \mathcal D_0(\tau) \\
		& ~~~~ +  C_{r_1} \bigl( \beta_1^{-\varsigma} + 1 \bigr) \mathcal E_{in},
	\end{aligned}
\end{equation}
for any given $ \tau \in (0,T) $, some constant $ C_{r_1} $ depends only on $ r_1 $, and some constant $ \varsigma > 0 $. Here $ l_1 $ is given in \eqref{indices}. 
\end{lm}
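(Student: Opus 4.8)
The plan is to derive the basic $L^2$-level energy identity by testing \subeqref{eq:perturbation-0}{1} against the natural multiplier $\alpha^{r_1} x^2 \eta_\tau$ (so that, after integrating by parts in $x$, the viscous term $\frac43\mu\alpha^3\mathfrak B_x$ pairs with $\left(\tfrac{\eta_\tau}{1+\eta}\right)_x = \tfrac{1+\eta+x\eta_x}{x(1+\eta)}\mathfrak B$ to produce the dissipation $\int \alpha^{r_1+2} x^2[(1+\eta)x\eta_{x\tau}-x\eta_x\eta_\tau]^2\,dx$ on the left). The first term $\frac{x\overline\rho}{(1+\eta)^2}(\alpha\partial_\tau^2\eta+\alpha_\tau\partial_\tau\eta)$ against $\alpha^{r_1}x^2\eta_\tau$ gives, modulo factors of $1+\eta$ that are controlled by \eqref{def:a-priori-assm}, a perfect time-derivative $\frac{d}{d\tau}\left(\tfrac12\alpha^{r_1+1}\int x^4\overline\rho\,\eta_\tau^2\,dx\right)$ plus lower order commutator terms involving $\partial_\tau\bigl(\alpha^{r_1+1}(1+\eta)^{-2}\bigr)$; these commutators, together with the $\alpha_\tau\partial_\tau\eta$ term, produce a term $\sim (r_1+1)\alpha^{r_1}\alpha_\tau\int x^4\overline\rho\,\eta_\tau^2\,dx$ whose sign is what forces $r_1$ (and hence, via $\beta_1=\alpha_1$ large, the damping) to be chosen as in \eqref{constraint-000}; the remaining commutator pieces carry an extra $\eta_\tau$ or $\mathfrak B$ and are absorbed into $\omega\mathcal D_0$ using \eqref{def:a-priori-assm}.

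Next I would handle the pressure coupling. The term $\alpha^{4-3\gamma}\bigl(\tfrac{\zeta}{[(1+\eta+x\eta_x)(1+\eta)^2]^\gamma}\bigr)_x$ tested against $\alpha^{r_1}x^2\eta_\tau$ is, after integration by parts and using the boundary vanishing $\zeta|_{x=1}=0$, of the form $-\alpha^{r_1+4-3\gamma}\int \tfrac{\zeta}{[\cdots]^\gamma}(x^2\eta_\tau)_x\,dx$; rewriting $(x^2\eta_\tau)_x = x^2\bigl((1+\eta+x\eta_x)(1+\eta)^2\bigr)$-weighted combination of $\eta_\tau+x\eta_{x\tau}$ and $\eta_\tau$, this should combine with the second-law source and the linear terms in \subeqref{eq:perturbation-0}{2} so that, after also testing \subeqref{eq:perturbation-0}{2} against $\alpha^{-l_1}x^2\zeta$ (with $l_1=6\gamma-6-r_1$ chosen precisely so $4-3\gamma+r_1 = -(6\gamma-6-r_1)+3\gamma-1-(3\gamma-1)$, i.e. the powers of $\alpha$ match after using $r_1+4-3\gamma = -(l_1) + \cdots$), the cross terms cancel. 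This produces on the left $\frac{d}{d\tau}\bigl(\tfrac12\alpha^{-l_1}\int x^2\zeta^2\bigr)$ plus the good damping term $\tfrac{l_1}{2}\alpha^{-l_1-1}\alpha_\tau\int x^2\zeta^2$ (note $l_1>0$ when $r_1<6\gamma-6$), while the quadratic source $\tfrac43\mu(\gamma-1)\alpha^{3\gamma-1}[\cdots]^\gamma\mathfrak B^2$ paired with $\alpha^{-l_1}x^2\zeta$ is controlled by $\|\alpha^\sigma\mathfrak B\|_{L^\infty}\le\omega$ and a power count $3\gamma-1-l_1-2\sigma$ against the $\mathcal D_0$-weight $\alpha^{-l_1-1}\alpha_\tau$; this is where the constraint $2\sigma_1<r_1$ and $\gamma>7/6$ enter to make the exponent admissible.

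Then I would integrate the resulting differential inequality in $\tau$ from $0$ to $\tau$: the time-derivative terms give the energy quantities on the left of \eqref{lmest:L2} minus their initial values, which are bounded by $\mathcal E_{in}$; the good damping terms give the three dissipation integrals on the left; all error terms are either $\le C_{r_1}\omega\mathcal D_0(\tau)$ (the commutators carrying an extra small $L^\infty$ factor) or, for those few terms that cannot be absorbed into $\mathcal D_0$ but decay, are bounded by $C_{r_1}\beta_1^{-\varsigma}\mathcal E_{in}$ after using \eqref{expandingrate} to convert $\tau$-integrals of $\alpha^{-\text{(positive)}}$ into $\beta_1^{-1}$ times a bounded quantity (this is the origin of the $\beta_1^{-\varsigma}+1$ factor). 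The main obstacle, as I see it, is the careful bookkeeping of the powers of $\alpha$: one must verify that every error term genuinely carries either a power of $\alpha$ negative enough (so its time integral is $\lesssim\beta_1^{-\varsigma}$) or an extra factor from \eqref{def:a-priori-assm} (so it sits inside $\omega\mathcal D_0$), and that the two "good" damping coefficients $r_1$ and $l_1$ are simultaneously positive — this is exactly the role of \eqref{constraint-000}. The viscous term's lower bound, $\int\alpha^{r_1+2}x^2[(1+\eta)x\eta_{x\tau}-x\eta_x\eta_\tau]^2\,dx \gtrsim$ (dissipation in $\mathcal D_0$), requires the smallness of $\|\eta\|_{L^\infty},\|x\eta_x\|_{L^\infty}$ to keep the coercive structure, and the boundary term at $x=1$ from integrating $\mathfrak B_x$ by parts vanishes by \eqref{bc:perturbation}, which must be invoked explicitly.
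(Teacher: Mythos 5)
Your overall scheme -- testing \subeqref{eq:perturbation-0}{1} with an $\eta_\tau$-multiplier, testing \subeqref{eq:perturbation-0}{2} with $\alpha^{-l_1}x^2\zeta$, integrating in $\tau$, and absorbing errors via $\omega$-smallness and $\beta_1$-largeness -- is the paper's scheme, but two of your key steps fail as described. First, the cross terms do \emph{not} cancel. With these multipliers the pressure term of the momentum equation contributes (after the integration by parts you describe, using $\zeta|_{x=1}=0$) a term with weight $\alpha^{r_1+3-3\gamma}$, whereas the linear coupling in the $\zeta$-equation carries the weight $\alpha^{-l_1}=\alpha^{r_1+6-6\gamma}$; these differ by the factor $\alpha^{3\gamma-3}$, and the nonlinear factors $\bigl\lbrack(1+\eta+x\eta_x)(1+\eta)^2\bigr\rbrack^{\pm\gamma}$ do not match either, so no antisymmetric cancellation is available (your exponent identity ``$4-3\gamma+r_1=-(6\gamma-6-r_1)+\cdots$'' holds only if $\gamma=2/3$). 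The paper instead estimates the two coupling terms ($L_2$ and $L_3$ in its notation) separately by Cauchy--Schwarz together with the Hardy-based inequality \eqref{est:hardy-01}, which converts $\int x^2(\eta_\tau^2+x^2\eta_{x\tau}^2)\,dx$ into $\int x^4\overline\rho\,\eta_\tau^2\,dx+\int x^2\lbrack(1+\eta)x\eta_{x\tau}-x\eta_x\eta_\tau\rbrack^2\,dx$, and then absorbs them with $\varepsilon$-Young plus the observation that, precisely because $l_1=6\gamma-6-r_1$, the leftover exponents (e.g.\ $r_1+l_1+6-6\gamma=0$) are nonpositive; this, together with \eqref{ineq:integral-time-weight}, is the true origin of the $\beta_1^{-\varsigma}$ factor. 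Your plan also omits the $\delta x\overline\rho\,\eta$ term, whose control requires the fundamental-theorem-of-calculus bound \eqref{est:eta}.

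Second, your treatment of the quadratic source $\tfrac43\mu(\gamma-1)\alpha^{3\gamma-1}\lbrack\cdots\rbrack^\gamma\mathfrak B^2$ by putting \emph{both} factors of $\mathfrak B$ in $L^\infty$ does not close: the coefficient you quote, $\alpha^{3\gamma-1-l_1-2\sigma}=\alpha^{r_1+3-3\gamma}$ (with $\sigma=1$), need not be nonpositive under \eqref{constraint-000}, and after Young against the dissipation weight $\alpha^{-l_1-1}\alpha_\tau$ the error is of size $\beta_1^{-1}\alpha^{6\gamma-2-l_1-4\sigma}=\beta_1^{-1}\alpha^{r_1}$, whose time integral grows like $\alpha^{r_1}$ and destroys the uniform bound in \eqref{lmest:L2}. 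The paper's estimate of this term keeps only one factor of $\mathfrak B$ in $L^\infty$ (decay $\omega\alpha^{-\sigma}$) and rewrites the other as $\dfrac{(1+\eta)x\eta_{x\tau}-x\eta_x\eta_\tau}{(1+\eta+x\eta_x)(1+\eta)}$, pairing it with the momentum dissipation; the resulting exponent $3\gamma-l_1/2-r_1/2-\sigma-2=1-\sigma=0$ is exactly borderline, giving a bound $\lesssim\omega\beta_1^{-1/2}\mathcal D_0$. Finally, a bookkeeping point: the multiplier must be $\alpha^{r_1-1}x^3(1+\eta)^2\eta_\tau$, not $\alpha^{r_1}x^2\eta_\tau$; otherwise the produced energy is $\alpha^{r_1+1}\int x^4\overline\rho\,\eta_\tau^2\,dx$ rather than the $\alpha^{r_1}$-weighted quantity in \eqref{lmest:L2} and \eqref{def:functional-01}, the viscous weight becomes $\alpha^{r_1+3}$ instead of $\alpha^{r_1+2}$, and the damping coefficient becomes $\tfrac{1-r_1}{2}$ (forcing $r_1<1$) instead of the paper's $1-\tfrac{r_1}{2}$ with $r_1<2$.
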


\begin{proof}
	Taking the $ L^2 $-inner product of \subeqref{eq:perturbation-0}{1} with $ \alpha^{r_1 - 1} x^3 (1+\eta)^2 \eta_\tau $ yields
	\begin{equation}\label{ene:001}
	\begin{aligned}
	& \dfrac{d}{d\tau} \biggl\lbrace \dfrac{\alpha^{r_1}}{2} \int x^4 \overline{\rho} \abs{\eta_\tau}{2} \,dx \biggr\rbrace + ( 1 - \dfrac{r_1}{2}) \alpha^{r_1-1} \alpha_\tau \int x^4 \overline{\rho} \abs{\eta_\tau}{2} \,dx \\
	& ~~~~ + \dfrac{4}{3} \mu \alpha^{r_1+2} \int \dfrac{x^2 \lbrack (1+\eta) x\eta_{x\tau} - x\eta_x\eta_\tau \rbrack^2}{1+\eta + x\eta_x} \,dx = L_1 + L_2,
	\end{aligned}
	\end{equation}
	where
	\begin{align*}
	& L_1 := \delta \alpha^{r_1+3-3\gamma} \int x^4 \overline{\rho} (1+\eta) \eta \eta_\tau \,dx, \\
	& L_2 := - \alpha^{r_1+3-3\gamma} \int x^3 \biggl( \dfrac{\zeta}{\bigl\lbrack (1+\eta+x\eta_x)(1+\eta)^2 \bigr\rbrack^\gamma} \biggr)_x (1+\eta)^2 \eta_\tau \,dx \\
	& ~~~~ = \alpha^{r_1+3-3\gamma} \int \dfrac{x^2 (1+\eta) \bigl( (1+\eta)(\eta_\tau + x\eta_{x\tau}) + 2(1+\eta + x\eta_x)\eta_\tau \bigr) \zeta}{\bigl\lbrack (1+\eta + x\eta_x)(1+\eta)^2 \bigr\rbrack^\gamma} \,dx.
	\end{align*}
	On the other hand, taking the $ L^2 $-inner product of \subeqref{eq:perturbation-0}{2} with $ \alpha^{-l_1} x^2 \zeta $ yields
	\begin{equation}\label{ene:002}
	\begin{aligned}
	\dfrac{d}{d\tau} \biggl\lbrace \dfrac{\alpha^{-l_1}}{2} \int x^2  \abs{\zeta}{2} \,dx \biggr\rbrace + \dfrac{l_1}{2} \alpha^{-l_1-1} \alpha_\tau \int x^2 \abs{\zeta}{2} \,dx = L_3 + L_4, 
	\end{aligned}
	\end{equation}
	where
	\begin{align*}
	& L_3 := - \gamma \alpha^{-l_1} \int x^2 \overline p \zeta \bigl\lbrack (1+\eta+x\eta_x)^{\gamma-1}(1+\eta)^{2\gamma} (\eta_\tau + x\eta_{x\tau}) \\
	&	~~~~ ~~~~ + 2 (1+\eta+x\eta_x)^{\gamma}(1+\eta)^{2\gamma-1} \eta_\tau \bigr\rbrack \,dx, \\
	& L_4 := \dfrac{4}{3} \mu (\gamma-1) \alpha^{3\gamma-l_1-1} \int x^2 \bigl\lbrack (1+\eta + x\eta_x) (1+\eta)^2 \bigr\rbrack^{\gamma} \zeta \mathfrak B^2 \,dx.
	\end{align*}
	
	We notice that by applying the fundamental theorem of calculus and H\"older's inequality to $ \int x^4 \overline{\rho} \abs{\eta}{2} \,dx $, one has,
	\begin{equation}\label{est:eta}
	\biggl( \int x^4 \overline{\rho} \abs{\eta(\tau)}{2} \,dx \biggr)^{1/2} \leq \biggl(\int x^4 \overline{\rho} \abs{\eta_0}{2} \,dx \biggr)^{1/2} + \int_0^\tau \biggl(\int x^4 \overline{\rho} \abs{\eta_\tau}{2} \,dx \biggr)^{1/2} \,d\tau.
	\end{equation}
	In addition, by applying Hardy's inequality, one can derive 
	\begin{equation*}
	\begin{aligned}
	& \int x^2 ( \eta_\tau^2 + x^2\eta_{x\tau}^2 )\,dx \lesssim (1+\omega) \biggl( \int x^2 \eta_\tau^2 \,dx \\
	& ~~~~ + \int x^2( (1+\eta)x\eta_{x\tau} - x\eta_x\eta_\tau )^2 \,dx \biggr) \\
	& \lesssim (1+\omega) \biggl( \int x^4 (1-x)^2 \eta_\tau^2 + x^4 \eta_{x\tau}^2 \,dx \\
	& ~~~~ + \int x^2( (1+\eta)x\eta_{x\tau} - x\eta_x\eta_\tau )^2 \,dx\biggr) \\
	& \lesssim (1+\omega) \int x^4 (1-x)^2 \eta_\tau^2 \,dx + \omega \int x^2 \eta_\tau^2 \,dx \\
	& ~~~~ + (1+\omega) \int x^2( (1+\eta)x\eta_{x\tau} - x\eta_x\eta_\tau )^2 \,dx,\\
	\end{aligned}
	\end{equation*}
	which implies, provided $ \omega $ is small enough,
	\begin{align*}
	& \int x^2 ( \eta_\tau^2 + x^2 \eta_{x\tau}^2 )\,dx \lesssim (1+\omega) \int x^4 (1-x)^2 \eta_\tau^2 \,dx \\ & ~~~~ + (1+\omega) \int x^2( (1+\eta)x\eta_{x\tau} - x\eta_x\eta_\tau )^2 \,dx.
	\end{align*}
	Repeating the this process, one can get
	\begin{equation}\label{est:hardy-01}
	\begin{aligned}
	& \int x^2 ( \eta_\tau^2 + x^2 \eta_{x\tau}^2 )\,dx \lesssim (1+\omega) \int x^4 \overline{\rho} \eta_\tau^2 \,dx \\
	& ~~~~ + (1+\omega) \int x^2( (1+\eta)x\eta_{x\tau} - x\eta_x\eta_\tau )^2 \,dx,
	\end{aligned}
	\end{equation}
	with property \eqref{density:boundary}. 
	
	With inequalities \eqref{est:eta} and \eqref{est:hardy-01}, we estimate $ L_i $'s in the following:
	\begin{align*}
	& \int L_1 \,d\tau \leq (1+\omega) \delta \int \alpha^{r_1 + 3 - 3\gamma} \biggl( \int x^4 \overline{\rho} \abs{\eta}{2} \,dx \biggr)^{1/2}\biggl( \int x^4 \overline{\rho} \abs{\eta_\tau}{2} \,dx \biggr)^{1/2} \,d\tau \\
	& ~~ \leq (1 + \omega) \delta \int \alpha^{r_1 + 3 - 3\gamma} \biggl( \int x^4 \overline\rho \abs{\eta_\tau}{2} \,dx \biggr)^{1/2} \,d\tau \times \biggl\lbrace \biggl( \int x^4 \overline\rho \abs{\eta_0}{2} \,dx \biggr)^{1/2}\\
	&	~~~~  + \int \biggl( \int x^4 \overline\rho \abs{\eta_\tau}{2} \,dx \biggr)^{1/2} \,d\tau \biggr\rbrace \leq \varepsilon \int \beta_1 \alpha^{r_1} \int x^4 \overline \rho \abs{\eta_\tau}{2}\,dx \,d\tau \\
	& ~~~~ + C_\varepsilon (1+\omega)\delta^2 \int x^4 \overline
	\rho \abs{\eta_0}{2} \,dx \times \int \beta_1^{-1} \alpha^{r_1 + 6 - 6\gamma} \,d\tau \\
	& ~~~~ + (1+\omega)\delta \beta_1^{-1} \biggl( \int \alpha^{r_1+6-6\gamma} \,d\tau\biggr)^{1/2} \biggl( \int \alpha^{-r_1} \,d\tau \biggr)^{1/2}\\
	& ~~~~ \times  \int \beta_1 \alpha^{r_1} \int x^4 \overline\rho \abs{\eta_\tau}{2} \,dx \,d\tau, \\
	& \int L_2 \,d\tau \lesssim (1+\omega) \int \alpha^{r_1+3-3\gamma}  \biggl( \int x^2 \abs{\zeta}{2} \,dx \biggr)^{1/2} \\
	& ~~~~ \times \biggl( \int x^2 ( \eta_\tau^2 + x^2 \eta_{x\tau}^2 ) \,dx \biggr)^{1/2}\,d\tau \lesssim (1+\omega) \int \alpha^{r_1 + 3 - 3\gamma} \biggl( \int x^2  \abs{\zeta}{2} \,dx \biggr)^{1/2} \\
	& ~~~~ \times \biggl( \int x^4 \overline \rho \eta_\tau^2 \,dx + \int x^2 ( ( 1+\eta ) x\eta_{x\tau} - x\eta_x \eta_\tau )^2 \,dx \biggr)^{1/2} \,d\tau \\
	& ~~ \lesssim \varepsilon \int \beta_1 \alpha^{r_1} \int x^4 \overline\rho \abs{\eta_\tau}{2} \,dx \,d\tau + \varepsilon \int \alpha^{r_1+2} \int x^2 ( (1+\eta) x\eta_{x\tau} - x\eta_x \eta_\tau )^2 \,dx\,d\tau \\
	& ~~~~ + C_\varepsilon (1+\omega)  \sup_\tau ( \beta_1^{-2} \alpha^{r_1 +l_1 + 6 - 6\gamma} + \beta_1^{-1} \alpha^{r_1+l_1+4-6\gamma} ) \int \beta_1 \alpha^{-l_1} \int x^2 \abs{\zeta}{2} \,dx \,d\tau,\\
	& \int L_3 \,d\tau \lesssim (1+\omega) \int \alpha^{-l_1} \biggl( \int x^2  \abs{\zeta}{2} \,dx \biggr)^{1/2} \biggl( \int x^2 ( \eta_{\tau}^2 + x^2\eta_{x\tau}^2 )\,dx \biggr)^{1/2}  \,d\tau \\
	& ~~ \lesssim \varepsilon \int \beta_1 \alpha^{-l_1} \int x^2 \abs{\zeta}{2} \,dx \,d\tau + C_\varepsilon \beta_1^{-2} \sup_\tau \alpha^{-r_1-l_1} \times \int \beta_1 \alpha^{r_1} \int x^4 \overline\rho \abs{\eta_\tau}{2} \,dx\,d\tau \\
	& ~~~~ + C_\varepsilon \beta_1^{-1} \sup_\tau \alpha^{-r_1-l_1-2} \times \int \alpha^{r_1+2} \int x^2 ((1+\eta)x\eta_{x\tau} - x\eta_x\eta_\tau)^2 \,dx\,d\tau,\\
	& \int L_4 \,d\tau \lesssim \int \alpha^{3\gamma-l_1-1} \int x^2 \abs{\zeta}{} \abs{\mathfrak B}{}  \abs{(1+\eta) x\eta_{x\tau} - x\eta_x \eta_\tau}{}  \,dx \,d\tau \\
	& ~~ \lesssim \omega \int \alpha^{3\gamma-l_1-\sigma-1} \biggl(\int x^2 \abs{\zeta}{2} \,dx \biggr)^{1/2} \biggl( \int x^2 \bigl\lbrack(1+\eta) x\eta_{x\tau} - x\eta_x \eta_\tau \bigr\rbrack^2 \,dx \biggr)^{1/2} \,d\tau \\
	& ~~ \lesssim \omega \beta_1^{-1/2} \sup_\tau \alpha^{3\gamma - l_1/2 - r_1/2 - \sigma - 2} \times   \biggl( \int \beta_1 \alpha^{-l_1} \int x^2 \zeta^2 \,dx \,d\tau \biggr)^{1/2} \\
	& ~~~~ \times \biggl(  \int \alpha^{r_1+2} \int x^2 ((1+\eta) x\eta_{x\tau} - x\eta_x \eta_\tau)^2 \,dx\,d\tau \biggr)^{1/2}.
	\end{align*}
Then with our choice of $ r_1, l_1 $ in \eqref{indices}, and the fact 
\begin{equation}\label{ineq:integral-time-weight}
	\int_0^\tau \alpha^{\iota} \,d\tau \leq \beta_1^{-1} \int_0^\tau \alpha^{\iota - 1} \alpha_\tau \,d\tau = \dfrac{1}{\beta_1 \iota} \bigl( \alpha^\iota - \alpha^{\iota}_0 \bigr), ~~ \iota \neq 0,
\end{equation}
the estimates above yield the lemma after integrating \eqref{ene:001} and \eqref{ene:002} with respect to $ \tau $, letting $ \varepsilon $ be small enough and $ \beta_1 $ be large enough.
%
\end{proof}

Next, we are going to show the $ L^2 $ estimates of $ \eta_{\tau\tau} $ and $ \zeta_\tau $. In order to do so, applying the operations 
$ \partial_\tau \lbrace (1+\eta)^2 \eqref{eq:perturbation-0}_{1} \rbrace $  and  $ \partial_\tau  \eqref{eq:perturbation-0}_{2} $ leads to the following system:
\begin{equation}\label{eq:perturbation-1}
	\begin{cases}
		\alpha x\overline \rho \eta_{\tau\tau\tau}  + 2 \alpha_\tau x \overline \rho \eta_{\tau\tau} +\alpha_{\tau\tau} x\overline \rho \eta_\tau \\
		~~~~ + \biggl\lbrace - \alpha^{4-3\gamma}   \delta x \overline \rho (1+\eta) \eta + \alpha^{4-3\gamma} (1+\eta)^2\biggl( \dfrac{\zeta}{\bigl \lbrack (1+\eta+x\eta_x)(1+\eta)^2\bigr\rbrack^\gamma} \biggr)_x  \biggr\rbrace_\tau \\
		~~~~ = \dfrac{4}{3} \mu \alpha^3 (1+\eta)^2 \bigl\lbrack \mathfrak B_{x\tau} + 3 \bigl( \dfrac{\eta_\tau}{1+\eta} \bigr)_{x\tau}  \bigr\rbrack + \dfrac{8}{3} \mu \alpha^3 (1+\eta) \eta_\tau \bigl\lbrack \mathfrak B_x + 3 \bigl( \dfrac{\eta_\tau}{1+\eta}\bigr)_x  \bigr\rbrack \\
		~~~~ + 4 \mu \alpha^2 \alpha_\tau (1+\eta)^2 \bigl\lbrack \mathfrak B_{x} + 3 \bigl( \dfrac{\eta_\tau}{1+\eta} \bigr)_{x}  \bigr\rbrack , \\
		\zeta_{\tau\tau} + \gamma \overline p \bigl\lbrace (1+\eta+x\eta_x)^{\gamma-1}(1+\eta)^{2\gamma} (\eta_\tau + x\eta_{x\tau}) \\
		~~~~ ~~~~ + 2 (1+\eta+x\eta_x)^{\gamma}(1+\eta)^{2\gamma-1} \eta_\tau \bigr\rbrace_\tau \\
		~~~~ = \dfrac{4}{3} \mu (\gamma -1) \bigl\lbrace \alpha^{3\gamma-1} \bigl \lbrack (1+\eta+x\eta_x)(1+\eta)^2\bigr\rbrack^\gamma \mathfrak B^2 \bigr\rbrace_\tau,
	\end{cases}
\end{equation}
with the boundary condition
\begin{equation*}
	\mathfrak B_\tau \big|_{x=1} = \bigl( \dfrac{\eta_\tau + x\eta_{x\tau}}{1+\eta+x\eta_x} - \dfrac{\eta_\tau}{1+\eta} \bigr)_\tau \big|_{x=1} = 0. 
\end{equation*}

\begin{lm}\label{lm:d-tau}
	Under the same assumptions as in Proposition \ref{prop:energy-estimates}, we have
	\begin{equation}\label{lmest:L2-d-tau}
	\begin{aligned}
		& \alpha^{r_2} \int x^4 \overline\rho \abs{\eta_{\tau\tau}}{2} \,dx + \alpha^{-l_2} \int x^2 \abs{\zeta_\tau}{2} \,dx + \int_0^\tau \alpha^{r_2-1}\alpha_\tau \int x^4 \overline\rho \abs{\eta_{\tau\tau}}{2} \,dx \,d\tau' \\
		& ~~~~ + \int_0^\tau \alpha^{r_2 +2} \int x^2 \bigl\lbrack (1+\eta) x\eta_{x\tau\tau} - x\eta_x \eta_{\tau\tau} \bigr\rbrack^2 \,dx\,d\tau' \\
		& ~~~~ + \int_0^\tau \alpha^{-l_2-1} \alpha_\tau \int x^2 \abs{\zeta_\tau}{2} \,dx\,d\tau'  \leq C_{r_1,\sigma_1,\beta_1,\beta_2} \omega \mathcal D_0(\tau) \\
		& ~~~~ +  C_{r_1,\sigma_1,\beta_1,\beta_2}  \mathcal E_{in},
	\end{aligned}
\end{equation}
for any given $ \tau \in (0,T) $, some constant $ C_{r_1,\sigma_1,\beta_1,\beta_2} $ depends only on $ r_1, \sigma_1,\beta_1,\beta_2 $, and some constant $ \varsigma > 0 $. Here, $ r_2, l_2 $ are given in \eqref{indices}. 
\end{lm}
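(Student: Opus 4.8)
The plan is to run the weighted-energy scheme from the proof of Lemma~\ref{lm:L2}, now on the once-$\tau$-differentiated system \eqref{eq:perturbation-1}, using the lower-order bound \eqref{lmest:L2} of Lemma~\ref{lm:L2} (already controlled by $\mathcal E_{in}$) to close the coupling. First I would take the $L^2$-inner product of $\eqref{eq:perturbation-1}_1$ with $\alpha^{r_2-1} x^3 \eta_{\tau\tau}$. The leading term $\alpha x\overline\rho\,\eta_{\tau\tau\tau}$ produces $\frac{d}{d\tau}\bigl\{\tfrac{\alpha^{r_2}}{2}\int x^4\overline\rho\,\abs{\eta_{\tau\tau}}{2}\,dx\bigr\}$, and combined with $2\alpha_\tau x\overline\rho\,\eta_{\tau\tau}$ and the derivative of the $\alpha^{r_2}$ weight this gives the positive damping $\bigl(2-\tfrac{r_2}{2}\bigr)\alpha^{r_2-1}\alpha_\tau\int x^4\overline\rho\,\abs{\eta_{\tau\tau}}{2}\,dx$ (positive since $r_2\le r_1<2$). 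Integrating the viscous term $\tfrac{4}{3}\mu\alpha^3(1+\eta)^2\mathfrak B_{x\tau}$ by parts — the boundary term at $x=1$ vanishing by $\mathfrak B_\tau\big|_{x=1}=0$ and at $x=0$ by the $x^3$ weight — yields, modulo lower-order pieces, the dissipation $\alpha^{r_2+2}\int x^2\bigl[(1+\eta)x\eta_{x\tau\tau}-x\eta_x\eta_{\tau\tau}\bigr]^2\,dx$, exactly matching the term in $\mathcal D_0(T)$. Everything else — the $\alpha_{\tau\tau}x\overline\rho\,\eta_\tau$ term, the $\partial_\tau$ of the pressure/potential block $\bigl\{-\alpha^{4-3\gamma}\delta x\overline\rho(1+\eta)\eta + \alpha^{4-3\gamma}(1+\eta)^2(\zeta/[\cdots]^\gamma)_x\bigr\}_\tau$, and the commutator terms $\eta_\tau[\mathfrak B_x+3(\eta_\tau/(1+\eta))_x]$ and $\alpha^2\alpha_\tau(1+\eta)^2[\mathfrak B_x+3(\cdots)_x]$ generated by $\partial_\tau\{(1+\eta)^2\,\eqref{eq:perturbation-0}_1\}$ — is collected as remainders $\widetilde L_i$.

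Symmetrically I would take the $L^2$-inner product of $\eqref{eq:perturbation-1}_2$ with $\alpha^{-l_2} x^2 \zeta_\tau$, obtaining $\frac{d}{d\tau}\bigl\{\tfrac{\alpha^{-l_2}}{2}\int x^2\abs{\zeta_\tau}{2}\,dx\bigr\}$, the positive damping $\tfrac{l_2}{2}\alpha^{-l_2-1}\alpha_\tau\int x^2\abs{\zeta_\tau}{2}\,dx$ (positive since $l_2=6\gamma-6-r_2>0$ under \eqref{constraint-000}), plus remainders: the $\partial_\tau$ of the two transport terms, which couple $\zeta_\tau$ to $\eta_\tau,\eta_{\tau\tau},x\eta_{x\tau},x\eta_{x\tau\tau}$ weighted by $\overline p$ and powers of $(1+\eta),(1+\eta+x\eta_x)$; and the $\partial_\tau$ of the viscous source $\bigl\{\alpha^{3\gamma-1}[(1+\eta+x\eta_x)(1+\eta)^2]^\gamma\mathfrak B^2\bigr\}_\tau$, which expands into $(3\gamma-1)\alpha^{3\gamma-2}\alpha_\tau[\cdots]^\gamma\mathfrak B^2$, a $\partial_\tau[\cdots]^\gamma$-term, and $2\alpha^{3\gamma-1}[\cdots]^\gamma\mathfrak B\mathfrak B_\tau$.

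The heart of the argument is the term-by-term control of the $\widetilde L_i$. I would use the Hardy-type bound behind \eqref{est:hardy-01}, now at the $\eta_{\tau\tau}$-level, i.e. $\int x^2(\eta_{\tau\tau}^2 + x^2\eta_{x\tau\tau}^2)\,dx \lesssim (1+\omega)\bigl(\int x^4\overline\rho\,\eta_{\tau\tau}^2\,dx + \int x^2[(1+\eta)x\eta_{x\tau\tau} - x\eta_x\eta_{\tau\tau}]^2\,dx\bigr)$, the a priori assumptions \eqref{def:a-priori-assm} to extract factors of $\omega$ and the decay $\norm{\alpha^{\sigma_1}\eta_\tau}{\Lnorm{\infty}}$, $\norm{\alpha^{\sigma_1}x\eta_{x\tau}}{\Lnorm{\infty}}$, $\norm{\alpha^{\sigma}\mathfrak B}{\Lnorm{\infty}}\le\omega$, the expansion bounds \eqref{expandingrate} and $\alpha_{\tau\tau}\lesssim\beta_2^2\alpha$ from \eqref{expandingrate-02} (for the $\alpha_{\tau\tau}x\overline\rho\,\eta_\tau$ term), and the weighted time-integration identity \eqref{ineq:integral-time-weight}. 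Lower-order couplings (those with $\eta_\tau$ or $\zeta$ and no second-order derivative) get absorbed into $\mathcal D_0(\tau)$ of Lemma~\ref{lm:L2}; the second-order couplings are split by Cauchy–Schwarz with a small parameter $\varepsilon$ into a piece absorbed on the left (into $\alpha^{r_2-1}\alpha_\tau\int x^4\overline\rho\abs{\eta_{\tau\tau}}{2}$, $\alpha^{r_2+2}\int x^2[\cdots]^2$, $\alpha^{-l_2-1}\alpha_\tau\int x^2\abs{\zeta_\tau}{2}$) and a $C_\varepsilon$-piece that, after \eqref{ineq:integral-time-weight}, carries a factor $\beta_1^{-1}$ (occasionally $\beta_1^{-1/2}$) times $\sup_\tau\alpha^{(\cdot)}$ times a piece of $\mathcal E_0(\tau)+\mathcal D_0(\tau)$ or the already-controlled $\mathcal E_{in}$; the $\tau=0$ boundary terms from the integrations-by-parts produce $\int x^4\overline\rho\abs{\eta_2}{2}$ and $\int x^2\abs{\zeta_1}{2}$, both inside $\mathcal E_{in}$. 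Choosing $\varepsilon$ small and then $\beta_1=\alpha_1$ large, integrating in $\tau$, and applying a continuity/Grönwall argument yields \eqref{lmest:L2-d-tau}.

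The main obstacle, as in Lemma~\ref{lm:L2}, is the bookkeeping of the exponents of $\alpha$: for each remainder one must verify, using precisely \eqref{constraint-000} and the index relations \eqref{indices} ($r_2=r_1+2\sigma_1-2$, $l_2=6\gamma-6-r_2=6\gamma-4-r_1-2\sigma_1$), that the net power of $\alpha$ accumulated after applying \eqref{ineq:integral-time-weight} is non-positive, so that $\sup_\tau\alpha^{(\cdot)}<\infty$ and the $\beta_1^{-1}$ prefactor can be made small. The genuinely delicate terms are the cubic ones from the differentiated viscous source, above all $2\alpha^{3\gamma-1}[\cdots]^\gamma\mathfrak B\mathfrak B_\tau$ paired with $\alpha^{-l_2}x^2\zeta_\tau$: here one spends the decay $\norm{\alpha^{\sigma}\mathfrak B}{\Lnorm{\infty}}\le\omega$ (with $\sigma=1$) on one factor $\mathfrak B$, expands $\mathfrak B_\tau$ into its $\eta_{\tau\tau},x\eta_{x\tau\tau}$-part plus a lower-order part, and must check that $3\gamma-1-l_2-\sigma$ together with the $\alpha$-weights already attached to the $\zeta_\tau$- and $\eta_{\tau\tau}$-norms still leaves a negative exponent — which is exactly where the requirement $\gamma\in I_0=(7/6,\infty)$ and the relation $l_2=6\gamma-6-r_2$ get consumed. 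The analogous check for the $(3\gamma-1)\alpha^{3\gamma-2}\alpha_\tau[\cdots]^\gamma\mathfrak B^2$ term (with quadratic decay $\omega^2\alpha^{-2\sigma}$) is similar but easier.
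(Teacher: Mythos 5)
Your proposal follows essentially the same route as the paper: the same multipliers $\alpha^{r_2-1}x^3\eta_{\tau\tau}$ and $\alpha^{-l_2}x^2\zeta_\tau$ applied to the $\tau$-differentiated system \eqref{eq:perturbation-1}, the same damping/dissipation structure, the Hardy-type bound at the $\eta_{\tau\tau}$ level, the a priori decay of $\eta_\tau$, $x\eta_{x\tau}$, $\mathfrak B$, the use of \eqref{lmest:L2} to absorb the first-order couplings, and the same index bookkeeping (where the delicate $\mathfrak B\mathfrak B_\tau$ term lands at net exponent zero with $\sigma=1$, hence is handled by the $\omega$ smallness exactly as in the paper's $L_{14}$). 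I see no gap; this is the paper's argument in outline.
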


\begin{proof}
	After taking the $ L^2 $-inner product of \subeqref{eq:perturbation-1}{1} with $ \alpha^{r_2 - 1} x^3 \eta_{\tau\tau} $ and \subeqref{eq:perturbation-1}{2} with $ \alpha^{- l_2} x^2 \zeta_\tau $, it follows
	\begin{align}
		& \dfrac{d}{d\tau} \biggl\lbrace \dfrac{\alpha^{r_2}}{2} \int x^4 \overline \rho \abs{\eta_{\tau\tau}}{2} \,dx  \biggr\rbrace + \bigl(2 -\dfrac{r_2}{2}  \bigr) \alpha^{r_2 - 1} \alpha_\tau \int x^4 \overline \rho \abs{\eta_{\tau\tau}}{2} \,dx {\nonumber} \\
		& ~~~~ +  \dfrac{4\mu}{3}\alpha^{r_2 + 2} \int \dfrac{x^2 \lbrack (1+\eta) x \eta_{x\tau\tau} - x\eta_x \eta_{\tau\tau} \rbrack^2}{1+\eta+x\eta_x} \,dx = \sum_{i=5}^{10} L_i, \label{ene:003}\\
		& \dfrac{d}{d\tau} \biggl\lbrace \dfrac{\alpha^{-l_2}}{2} \int x^2 \abs{\zeta_\tau}{2} \,dx 
		\biggr\rbrace +  \dfrac{l_2}{2} \alpha^{-l_2-1} \alpha_\tau \int x^2 \abs{\zeta_\tau}{2} \,dx
		{\nonumber}\\
		& ~~~~ 
		= \sum_{i=11}^{14} L_i,
		\label{ene:004}
	\end{align}
	where
	\begin{align*}
		& L_5: =  - \alpha^{r_2 - 1} \alpha_{\tau\tau} \int  x^4 \overline \rho \eta_\tau \eta_{\tau\tau} \,dx, \\
		& L_6 := \delta \alpha^{r_2 + 3 - 3 \gamma} \int \bigl\lbrace x^4 \overline \rho (1+\eta) \eta  \bigr\rbrace_\tau  \eta_{\tau\tau} \,dx, \\
		& L_7 := (4-3\gamma) \delta \alpha^{r_2 + 2-3\gamma}\alpha_\tau \int x^4 \overline\rho (1+\eta)\eta \eta_{\tau\tau} \,dx , \\
		& L_8 := \int \biggl\lbrace 2 \alpha^{4-3\gamma} x^2 (1+\eta)(1+\eta+x\eta_x) \biggl( \dfrac{\zeta}{\bigl\lbrack (1+\eta+x\eta_x)(1+\eta)^2 \bigr\rbrack^\gamma} \biggr) \biggr\rbrace_\tau \alpha^{r_2 - 1} \eta_{\tau\tau} \,dx \\
		& ~~~~ + \int \biggl\lbrace \alpha^{4-3\gamma} x^2 (1+\eta)^2 \biggl( \dfrac{\zeta}{\bigl\lbrack (1+\eta+x\eta_x)(1+\eta)^2 \bigr\rbrack^\gamma} \biggr) \biggr\rbrace_\tau \alpha^{r_2 - 1} (\eta_{\tau\tau} + x\eta_{x\tau\tau} ) \,dx,  \\
		& L_9 : =  \dfrac{4}{3} \mu \alpha^{r_2 + 2} \int \biggl( x^2 (1+\eta)^2 (\eta_{\tau\tau} + x \eta_{x\tau\tau}) + 2 x^2 (1+\eta)(1+\eta+x\eta_{x}) \eta_{\tau\tau} \biggr) \\
		& ~~~~ \times \biggl( \dfrac{(\eta_\tau+x\eta_{x\tau})^2}{(1+\eta+x\eta_x)^2} - \dfrac{\eta_\tau^2}{(1+\eta)^2} \biggr) + 6 x^2 (1+\eta)^2 \eta_{\tau\tau} \biggl( \dfrac{x\eta_{x} \eta_\tau^2}{(1+\eta)^3} - \dfrac{x\eta_{x\tau}\eta_\tau}{(1+\eta)^2} \biggr)  \,dx  , \\
		& L_{10} := \dfrac{8}{3} \mu \alpha^{r_2 +2} \int - ( x^3 (1+\eta) \eta_\tau \eta_{\tau\tau})_x \mathfrak B + 3 x^3 (1+\eta) \eta_\tau \eta_{\tau\tau} \biggl( \dfrac{\eta_\tau}{1+\eta} \biggr)_x \,dx\\
		& ~~~~ + 4 \mu \alpha^{r_2 + 1}\alpha_\tau \int  - (x^3 (1+\eta)^2 \eta_{\tau\tau})_x \mathfrak B + 3 x^3(1+\eta)^2 \eta_{\tau\tau} \biggl( \dfrac{\eta_\tau}{1+\eta} \biggr)_x \,dx  ,\\
		& L_{11} :=  - \gamma \alpha^{-l_2} \int x^2 \overline p (1+\eta + x\eta_x)^{\gamma} (1+\eta)^{2\gamma} \biggl( \dfrac{\eta_{\tau\tau} + x\eta_{x\tau\tau}}{1+\eta+x\eta_x} + 2 \dfrac{\eta_{\tau\tau}}{1+\eta} \biggr) \zeta_\tau\,dx,\\
		& L_{12} := - \gamma \alpha^{-l_2} \int x^2 \overline p \biggl\lbrace \bigl\lbrack ( 1+\eta + x\eta_x)^{\gamma-1} (1+\eta)^{2\gamma} \bigr\rbrack_\tau (\eta_\tau + x\eta_{x\tau}) \\
		& ~~~~ + 2 \bigl\lbrack ( 1+\eta + x\eta_x)^{\gamma} (1+\eta)^{2\gamma-1} \bigr\rbrack_\tau \eta_\tau \biggr\rbrace \zeta_\tau \,dx, \\
		& L_{13} : = \dfrac{4}{3} \mu (\gamma - 1)(3\gamma-1) \alpha^{3\gamma - l_2 - 2} \alpha_\tau \int x^2 \bigl\lbrack (1+\eta+x\eta_x)(1+\eta)^2 \bigr\rbrack^\gamma \mathfrak B^2 \zeta_\tau \,dx , \\
		& L_{14} := \dfrac{4}{3} \mu (\gamma-1) \alpha^{3\gamma-l_2-1} \int x^2 \bigl\lbrace \bigl\lbrack (1+\eta+x\eta_x)(1+\eta)^2 \bigr\rbrack^\gamma \mathfrak B^2 \bigr\rbrace_\tau \zeta_\tau \,dx  .
	\end{align*}
	Applying H\"older's and Young's inequalities to $ \int L_5 \,d\tau $ yields the following inequalities:
	\begin{align*}
		& \int L_5 \,d\tau \lesssim \int \beta_2^2 \alpha^{r_2} \int x^4 \overline \rho \abs{\eta_\tau}{} \abs{\eta_{\tau\tau}}{} \,dx \,d\tau \lesssim \varepsilon \int \beta_1 \alpha^{r_2} \int x^4 \overline\rho \abs{\eta_{\tau\tau}}{2} \,dx\,d\tau\\
		& ~~~~ + C_\varepsilon \sup_\tau \alpha^{r_2 - r_1} \beta_2^4 \beta_1^{-2} \int \beta_1 \alpha^{r_1} \int x^4 \overline \rho \abs{\eta_\tau}{2} \,dx\,d\tau,
	\end{align*}
	Similarly, with \eqref{est:eta},
	\begin{align*}
		& \int L_6 \,d\tau \lesssim \varepsilon \int \beta_1 \alpha^{r_2} \int x^4 \overline\rho \abs{\eta_{\tau\tau}}{2}\,dx\,d\tau \\
		& ~~~~ + C_\varepsilon \delta^2 \beta_{1}^{-2} \sup_\tau \alpha^{r_2 - r_1 - 6 \gamma + 6} \int \beta_1 \alpha^{r_1} \int x^4 \overline \rho \abs{\eta_\tau}{2} \,dx\,d\tau, \\
		& \int L_7 \,d\tau \lesssim \delta \int \alpha^{r_2 - 3 \gamma + 2} \alpha_\tau \biggl( \int x^4 \overline \rho \abs{\eta_{\tau\tau}}{2} \,dx \biggr)^{1/2} \biggl\lbrack \biggl( \int x^4 \overline\rho \abs{\eta_0}{2} \,dx \biggr)^{1/2} \\
		& ~~~~  + \int_0^\tau \biggl( \int x^4 \overline\rho \abs{\eta_\tau(\tau')}{2} \,dx \biggr)^{1/2} \,d\tau'  \biggr\rbrack \,d\tau \lesssim \varepsilon \int \alpha^{r_2-1} \alpha_\tau \int x^4 \overline\rho \abs{\eta_{\tau\tau}}{2} \,dx\,d\tau\\
		& ~~~~ + C_\varepsilon \delta^2 \int \alpha^{ r_2 - 6\gamma + 5}\alpha_\tau \,d\tau \times \int x^4 \overline \rho \abs{\eta_0}{2} \,dx + C_\varepsilon \delta^2 \beta_1^{-1}  \int \alpha^{-r_1} \,d\tau \\
		& ~~~~ ~~~~ \times  \int \alpha^{r_2 - 6 \gamma + 5} \alpha_\tau \,d\tau  \times \int \beta_1 \alpha^{r_1} \int x^4 \overline \rho \abs{\eta_\tau}{2} \,dx\,d\tau, 
	\end{align*}
	Moreover, notice $ L_8 $ -- $ L_{14} $,
	\begin{align*}
		& L_8 \lesssim (1+\omega) \alpha^{r_2 - 3 \gamma + 3} \int x^2 \abs{\zeta}{} ( \abs{\eta_\tau}{} + \abs{x\eta_{x\tau}}{} )(\abs{\eta_{\tau\tau}}{} + \abs{x\eta_{x\tau\tau}}{}) \,dx\\
		& ~~~~ + (1+\omega) \alpha^{r_2 - 3\gamma + 3} \int x^2 \abs{\zeta_\tau}{} (\abs{\eta_{\tau\tau}}{} + \abs{x\eta_{x\tau\tau}}{}) \,dx \\
		& ~~~~ + (1+\omega) \alpha^{r_2 - 3 \gamma + 2} \alpha_\tau \int x^2 \abs{\zeta}{} (\abs{\eta_{\tau\tau}}{} + \abs{x\eta_{x\tau\tau}}{}) \,dx, \\
		& L_9 \lesssim (1+\omega) \alpha^{r_2 + 2} \int x^2 (\abs{\eta_{\tau}}{} + \abs{x\eta_{x\tau}}{}) \abs{(1+\eta)x\eta_{x\tau} - x\eta_x \eta_\tau}{} ( \abs{\eta_{\tau\tau}}{} + \abs{x\eta_{x\tau\tau}}{}) \,dx, \\
		& L_{10} \lesssim (1+\omega)  \alpha^{r_2 + 2} \int x^2 (\abs{\eta_{\tau}}{} + \abs{x\eta_{x\tau}}{})\abs{(1+\eta)x\eta_{x\tau} - x\eta_x \eta_\tau}{}( \abs{\eta_{\tau\tau}}{} + \abs{x\eta_{x\tau\tau}}{}) \,dx \\
		& ~~~~ + (1+\omega) \alpha^{r_2+1} \alpha_\tau \int x^2 \abs{\mathfrak B}{}( \abs{(1+\eta)x\eta_{x\tau\tau} - x\eta_x \eta_{\tau\tau}}{}) \,dx, \\
		& L_{11} \lesssim (1+\omega) \alpha^{-l_2} \int x^2 \abs{\zeta_\tau}{} (\abs{\eta_{\tau\tau}}{} + \abs{x\eta_{x\tau\tau}}{} ) \,dx, \\
		& L_{12} \lesssim (1+\omega) \alpha^{-l_2}  \int x^2 \abs{\zeta_\tau}{} (\abs{\eta_{\tau}}{2} + \abs{x\eta_{x\tau}}{2} ) \,dx, \\
		& L_{13} \lesssim (1+\omega) \alpha^{3\gamma-l_2 - 2} \alpha_\tau \int x^2 \abs{\zeta_\tau}{} \abs{\mathfrak B}{} \abs{(1+\eta)x\eta_{x\tau} - x\eta_x \eta_\tau}{} \,dx,\\
		& L_{14} \lesssim (1+\omega) \alpha^{3\gamma - l_2 - 1} \int x^2 \abs{\zeta_\tau}{} \bigl( \abs{(1+\eta)x\eta_{x\tau} - x\eta_x \eta_\tau}{}(\abs{\eta_{\tau}}{2} +  \abs{x\eta_{x\tau}}{2}) \\
		& ~~~~ ~~~~ +  \abs{\mathfrak B}{} \abs{(1+\eta)x\eta_{x\tau\tau} - x\eta_x \eta_{\tau\tau}}{}  \bigr) \,dx.
	\end{align*}
	Consequently, applying \eqref{def:a-priori-assm}, H\"older's and Young's inequalities implies, as before, 
	\begin{align*}
		& \int L_8 \,d\tau \lesssim \varepsilon \int  \alpha^{r_2-1}\alpha_\tau \int x^4 \overline \rho \abs{\eta_{\tau\tau}}{2} \,dx\,d\tau\\
		& ~~~~  + \varepsilon \int \alpha^{r_2 + 2} \int x^2 \bigl\lbrack (1+\eta) x\eta_{x\tau\tau} - x\eta_x \eta_{\tau\tau}\bigr\rbrack^2 \,dx\,d\tau \\
		& ~~~~ + C_\varepsilon \omega^2 \bigl( \beta_1^{-2} \sup_\tau \alpha^{r_2 + l_1 - 6\gamma - 2 \sigma_1 + 6} + \beta_1^{-1} \sup_\tau \alpha^{r_2 + l_1 - 6\gamma - 2\sigma_1 + 4} \bigr)\\
		& ~~~~ ~~~~ \times \int \beta_1 \alpha^{-l_1} \int x^2 \abs{\zeta}{2} \,dx\,d\tau + C_\varepsilon \bigl( \beta_1^{-2} \sup_\tau \alpha^{r_2 + l_2 - 6\gamma  + 6} \\
		& ~~~~ ~~~~ + \beta_1^{-1} \sup_\tau \alpha^{r_2 + l_2 - 6\gamma + 4} \bigr) \times \int \beta_1 \alpha^{-l_2} \int x^2 \abs{\zeta_\tau}{2} \,dx\,d\tau \\
		& ~~~~ + C_\varepsilon \bigl(  \sup_\tau \alpha^{r_2 + l_1 - 6\gamma + 6} + \sup_\tau \alpha^{r_2 + l_1 - 6\gamma + 3} \alpha_\tau \bigr)\\
		& ~~~~ ~~~~ \times \int \alpha^{-l_1-1}\alpha_\tau \int x^2 \abs{\zeta}{2} \,dx\,d\tau, \\
		& \int L_9 + L_{10} \,d\tau \lesssim \varepsilon \int \alpha^{r_2-1}\alpha_\tau \int x^4 \overline\rho \abs{\eta_{\tau\tau}}{2} \,dx\,d\tau \\
		& ~~~~ + \varepsilon \int \alpha^{r_2 +2 }\int x^2 \bigl\lbrack (1+\eta) x\eta_{x\tau\tau} - x\eta_x \eta_{\tau\tau}\bigr\rbrack^2 \,dx\,d\tau \\
		& ~~~~ + C_\varepsilon \sup_\tau \bigl( \omega \beta_1^{-1} \alpha^{r_2 - r_1 - 2\sigma_1 + 2} + \omega \alpha^{r_2 - r_1 - 2\sigma_1} + \alpha^{r_2 - r_1 - 2 }\alpha_\tau^2 \bigr)\\
		& ~~~~ ~~~~ \times \int \alpha^{r_1 +2 }\int x^2 \bigl\lbrack (1+\eta) x\eta_{x\tau} - x\eta_x \eta_{\tau}\bigr\rbrack^2 \,dx\,d\tau, \\
		& \int L_{11} \,d\tau \lesssim \varepsilon \int \beta_1 \alpha^{-l_2} \int x^2 \abs{\zeta_\tau}{2} \,dx\,d\tau + C_\varepsilon \sup_\tau(\beta_1^{-2} \alpha^{-l_2 - r_2} + \beta_1^{-1} \alpha^{-l_2 - r_2 - 2} ) \\
		& ~~~~ \times \biggl( \int \beta_1 \alpha^{r_2} \int x^4 \overline \rho \abs{\eta_{\tau\tau}}{2} \,dx \,d\tau + \int \alpha^{r_2 + 2} \int x^2 \bigl\lbrack (1+\eta)x\eta_{x\tau\tau} - x\eta_{x}\eta_{\tau\tau}\bigr\rbrack^2 \,dx\,d\tau \biggr), \\
		& \int L_{12} \,d\tau \lesssim  \varepsilon \int  \alpha^{-l_2-1}\alpha_\tau \int x^2 \abs{\zeta_\tau}{2} \,dx\,d\tau \\
		& ~~~~ ~~~~ + C_\varepsilon \omega \sup_\tau(\beta_1^{-2} \alpha^{-l_2 - r_1 - 2\sigma_1} + \beta_1^{-1} \alpha^{-l_2 - r_1 - 2\sigma_1 - 2} )\\
		& ~~~~ ~~~~ \times \biggl( \int  \alpha^{r_1-1}\alpha_\tau \int x^4 \overline \rho \abs{\eta_{\tau}}{2} \,dx \,d\tau  + \int \alpha^{r_1 + 2} \int x^2 \bigl\lbrack (1+\eta)x\eta_{x\tau} - x\eta_{x}\eta_{\tau}\bigr\rbrack^2 \,dx\,d\tau \biggr), \\
		& \int L_{13} \,d\tau \lesssim \varepsilon \int \alpha^{-l_2- 1} \alpha_\tau \int x^2 \abs{\zeta_\tau}{2} \,dx\,d\tau + C_\varepsilon \omega
		 \sup_\tau (\alpha^{6\gamma-l_2 - r_1 - 2\sigma - 5} \alpha_\tau ) \\
		 & ~~~~ ~~~~ \times \int \alpha^{r_1 + 2} \int x^2 \bigl\lbrack(1+\eta)x\eta_{x\tau} - x\eta_x \eta_\tau\bigr\rbrack^2 \,dx\,d\tau, \\
		& \int L_{14} \,d\tau \lesssim \varepsilon \int \beta_1 \alpha^{-l_2} \int x^2 \abs{\zeta_\tau}{2} \,dx \,d\tau + C_\varepsilon \omega  \beta_1^{-1} \sup_\tau \alpha^{6\gamma - l_2 -r_1 - 4\sigma_1 - 4} \\
		& ~~~~\times  \int \alpha^{r_1+2} \int x^2 \bigl\lbrack (1+\eta) x\eta_{x\tau} - x\eta_x \eta_\tau \bigr\rbrack^2 \,dx \,d\tau \\
		& ~~~~ + C_\varepsilon \omega  \beta_1^{-1} \sup_\tau \alpha^{6\gamma - l_2 -r_2 - 2\sigma - 4} \times \int \alpha^{r_2+2} \int x^2 \bigl\lbrack (1+\eta) x\eta_{x\tau\tau} - x\eta_x \eta_{\tau\tau} \bigr\rbrack^2 \,dx \,d\tau ,
	\end{align*}
	where we have also applied inequality \eqref{est:hardy-01} with $ \eta_{\tau} $ replaced by $ \eta_{\tau\tau} $ and the fact that in $ L_{10} $,
	\begin{gather*}
	x \biggl(\dfrac{\eta_\tau}{1+\eta}\biggr)_x = \dfrac{(1+\eta+x\eta_x)}{(1+\eta)} \mathfrak B = \dfrac{(1+\eta)x\eta_{x\tau} - x\eta_x \eta_\tau }{(1+\eta)^2}, \\
	 - (x^3 (1+\eta)^2 \eta_{\tau\tau})_x \mathfrak B + 3 x^3(1+\eta)^2 \eta_{\tau\tau} \biggl( \dfrac{\eta_\tau}{1+\eta} \biggr)_x \\
	  = - x^2(1+\eta) ((1+\eta)x\eta_{x\tau\tau} - x\eta_x \eta_{\tau\tau})  \mathfrak B.
	\end{gather*}
	
	Consequently, after integrating \eqref{ene:003} and \eqref{ene:004} with respect to $ \tau $, choosing $ \varepsilon $ small enough, $ \beta_1 $ large enough, with $ \sigma = 1, \sigma_1 < \sigma $, and $ r_2, l_2 $ given in \eqref{indices}, 
	the estimates above and \eqref{lmest:L2} yield the lemma.
\end{proof}

\begin{proof}[Proof of Proposition \ref{prop:energy-estimates}]
	For fixed $ r_1 $ and $ \sigma_1 \in (0,\sigma) $, $ \sigma = 1 $, provided $ \omega $ small enough in \eqref{def:a-priori-assm} and $ \beta_1 $ large enough, summing up \eqref{lmest:L2}, \eqref{est:eta} and \eqref{lmest:L2-d-tau} implies \eqref{propest:energy} and finishes the proof of the proposition.  
\end{proof}

\section{Interior estimates}\label{sec:interior}
In this section, we are going to show the boundedness of the interior energy and dissipation functionals for a smooth solution to \eqref{eq:perturbation-0}. That is:
\begin{prop}[Interior estimates]\label{prop:interior-estimates}
Consider a smooth enough solution $ (\eta, \zeta) $ to system \eqref{eq:perturbation-0}, satisfying \eqref{bc:perturbation} and \eqref{initial:perturbation}. Suppose that \eqref{def:a-priori-assm} is satisfied with $ \omega \in (0,1) $ small enough, and that the expanding rate $ \beta_1 = \alpha_1 $ in \eqref{expandingrate} is large enough, we have
\begin{equation}\label{propest:interior}
	\begin{aligned}
	& \mathcal E_1 (T) + \mathcal D_1(T) \leq C_{r_1,\sigma_1,\beta_1,\beta_2} \mathcal E_{in} + C_{r_1,\sigma_1,\beta_1,\beta_2} \int \chi \bigl( \abs{\eta_0}{2} + \abs{x\eta_{0,x}}{2} \bigr) \,dx \\
	& ~~~~ + C_{r_1,\sigma_1,\beta_1,\beta_2} \bigl(\mathcal E_{in} + \mathcal E_0(T) +  \mathcal D_0(T) \bigr),
	\end{aligned}
\end{equation}
for any positive time $ T \in (0,\infty) $.  Here $ C_{r_1,\sigma_1,\beta_1,\beta_2}$ is some constant depending on $ r_1, \sigma_1,\beta_1,\beta_2$. 
\end{prop}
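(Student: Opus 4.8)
The plan is to run a localized energy argument near the center $ x=0 $ using the cut-off $ \chi $ of \eqref{def:cut-off}. On $ \operatorname{supp}\chi\subset[0,3/4] $ the self-similar profiles are non-degenerate, $ \overline\rho\simeq 1 $ and $ \overline p\simeq 1 $ by \eqref{eq:self-similar-sol}, so there \subeqref{eq:perturbation-0}{1} behaves like a damped viscous wave equation for $ \eta $ and \subeqref{eq:perturbation-0}{2} like a transport equation for $ \zeta $ with a viscous source. As in the proof of Proposition \ref{prop:energy-estimates}, I would split the argument into two lemmas: first the interior $ L^2 $ estimate of $ (\eta_\tau, x\eta_{x\tau},\zeta) $ from \eqref{eq:perturbation-0}, then the interior $ L^2 $ estimate of $ (\eta_{\tau\tau}, x\eta_{x\tau\tau},\zeta_\tau) $ obtained by running the same scheme on the $ \tau $-differentiated system \eqref{eq:perturbation-1}. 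Summing the two with $ \omega $ small, $ \beta_1 $ large, and invoking Proposition \ref{prop:energy-estimates} to absorb the commutator contributions would produce \eqref{propest:interior}.

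For the first step I would multiply \subeqref{eq:perturbation-0}{1} by $ \chi^2 $ times a suitable power of $ \alpha $ times $ x(1+\eta)^2\eta_\tau $ — the factor $ x\eta_\tau $ because $ \alpha x\eta_\tau $ is the velocity perturbation — and multiply \subeqref{eq:perturbation-0}{2} by $ \alpha^{-l_3}\chi^2\zeta $, then add. The $ \eta_{\tau\tau} $-term produces $ \frac{d}{d\tau} $ of a weighted kinetic energy $ \sim\int\chi^2 x^2\overline\rho\abs{\eta_\tau}{2}\,dx $, the $ \alpha_\tau\partial_\tau\eta $-term a damping, and the viscous term — which equals $ \frac{4}{3}\mu\alpha^3(\mathfrak H(\eta))_{x\tau} $ with $ \mathfrak H $ as in \eqref{def:relative-entropy-functional} and $ (\mathfrak H(\eta))_\tau=\mathfrak B+3\eta_\tau/(1+\eta) $ the \emph{full} linearized divergence — after integration by parts produces the dissipation $ \sim\int\chi^2\bigl((\mathfrak H(\eta))_\tau\bigr)^2\,dx $. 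The key point is that, unlike $ \mathfrak B $ alone (which enters $ \mathcal D_0 $ only with a degenerate $ x^2 $-weight near $ x=0 $), the full divergence $ (\mathfrak H(\eta))_\tau = 3\eta_\tau+x\eta_{x\tau}+\text{(lower order)} $ controls $ \eta_\tau $ itself, via the elementary identity $ \int\chi^2(3\eta_\tau+x\eta_{x\tau})^2\,dx=\int\chi^2(6\eta_\tau^2+x^2\eta_{x\tau}^2)\,dx-6\int x\chi\chi'\eta_\tau^2\,dx $ (boundary terms vanishing because of the $ x $-factor at $ x=0 $ and $ \chi(3/4)=0 $); hence the dissipation dominates $ \alpha^{r_3}\int\chi^2(\eta_\tau^2+x^2\eta_{x\tau}^2)\,dx $ with $ r_3=r_1 $. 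The $ \zeta $-multiplier gives the weighted damping $ l_3\alpha^{-l_3-1}\alpha_\tau\int\chi^2\zeta^2\,dx $ plus source terms controlled by $ \int\chi^2\overline p\abs{\zeta}{}(\abs{\eta_\tau}{}+\abs{x\eta_{x\tau}}{})\,dx $ and $ \alpha^{3\gamma-1-l_3}\int\chi^2\bigl[(1+\eta+x\eta_x)(1+\eta)^2\bigr]^\gamma\abs{\zeta}{}\mathfrak B^2\,dx $; the first is absorbed by Young's inequality against the dissipation just produced, the second by the a priori bound $ \norm{\alpha\mathfrak B}{\Lnorm{\infty}}\le\omega $, the exponents closing precisely because $ l_3=6\gamma-4-r_1 $, $ r_3=r_1 $ in \eqref{indices}.

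For the second step I would apply the analogous multipliers — $ \chi^2 x(1+\eta)^2\eta_{\tau\tau} $ up to a power of $ \alpha $ on \subeqref{eq:perturbation-1}{1}, and $ \alpha^{-l_4}\chi^2\zeta_\tau $ on \subeqref{eq:perturbation-1}{2} — yielding the energies $ \alpha^{-r_4}\int\chi^2 x^2\overline\rho\abs{\eta_{\tau\tau}}{2}\,dx $ and $ \alpha^{-l_4}\int\chi^2\abs{\zeta_\tau}{2}\,dx $, their dampings, and, by the same divergence identity applied to $ \eta_{\tau\tau} $, the dissipation $ \alpha^{2-r_4}\int\chi^2(\eta_{\tau\tau}^2+x^2\eta_{x\tau\tau}^2)\,dx $, with $ r_4=4-r_1-2\sigma_1 $, $ l_4=6\gamma-2-r_1-2\sigma_1 $ from \eqref{indices}. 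The many new terms (products of the first-order quantities $ \eta_\tau,x\eta_{x\tau},\mathfrak B $ with second-order ones) are treated by Young's inequality, absorbing second-order dissipations with a small constant and routing the first-order factors into the step-one dissipation or into $ \mathcal E_0(T)+\mathcal D_0(T) $ via \eqref{ineq:integral-time-weight} with $ \beta_1 $ large. Every term produced by $ \chi' $ upon integration by parts is supported in $ [1/2,3/4] $, where $ x^4\overline\rho\simeq 1 $, hence is dominated by $ \mathcal E_0(T)+\mathcal D_0(T) $ — this accounts for that contribution on the right of \eqref{propest:interior}. Finally, $ \int\chi(\abs{\eta_0}{2}+\abs{x\eta_{0,x}}{2})\,dx $ enters through a fundamental-theorem-of-calculus bound for $ \norm{\chi^{1/2}\eta(\tau)}{\Lnorm{2}} $ and a Hardy bound for $ \norm{\chi^{1/2}x\eta_x(\tau)}{\Lnorm{2}} $, as in \eqref{est:eta}--\eqref{est:hardy-01}, needed to close the terms from the pressure contribution $ \alpha^{4-3\gamma}\delta x\overline\rho\eta/(1+\eta) $.

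The main obstacle is the viscous source in the pressure equation, $ \frac{4}{3}\mu(\gamma-1)\alpha^{3\gamma-1}\bigl[(1+\eta+x\eta_x)(1+\eta)^2\bigr]^\gamma\mathfrak B^2 $ and, in the second step, its $ \tau $-derivative in \subeqref{eq:perturbation-1}{2}: these carry the strongly growing factor $ \alpha^{3\gamma-1} $, so absorbing them into the weighted dampings forces both the a priori decay rate $ \sigma=1 $ for $ \mathfrak B $ in \eqref{def:a-priori-assm} and the delicate arithmetic of the indices \eqref{indices}--\eqref{constraint-000}, and is ultimately responsible for the restriction $ \gamma>7/6 $. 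A secondary difficulty, already present at the linear level, is obtaining a dissipation coercive in $ \eta_\tau $ (and $ \eta_{\tau\tau} $) \emph{without} a degenerate spatial weight near $ x=0 $ — which is exactly what the full divergence structure $ (\mathfrak H(\eta))_\tau=3\eta_\tau+x\eta_{x\tau}+\cdots $ buys over the deviatoric quantity $ \mathfrak B $ — while keeping all boundary and $ \chi' $-commutator terms absorbable by $ \mathcal E_0(T)+\mathcal D_0(T) $.
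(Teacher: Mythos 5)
Your plan follows essentially the same route as the paper's proof: a two-step localized energy argument with the cut-off $\chi$, multipliers of the form $\chi x\eta_\tau$ and $\alpha^{-l_3}\chi\zeta$ for \eqref{eq:perturbation-0} and $\alpha^{-r_4-1}\chi x\eta_{\tau\tau}$ and $\alpha^{-l_4}\chi\zeta_\tau$ for the $\tau$-differentiated system \eqref{eq:perturbation-1}, with the full divergence structure of the viscosity supplying the non-degenerate interior dissipation of $\eta_\tau,x\eta_{x\tau}$ (and $\eta_{\tau\tau},x\eta_{x\tau\tau}$), the $\chi'$-commutators absorbed into $\mathcal E_0+\mathcal D_0$ since they live in $[1/2,3/4]$, the $\alpha^{3\gamma-1}\mathfrak B^2$ sources handled by the a priori decay of $\mathfrak B$, and the term $\int\chi(\abs{\eta_0}{2}+\abs{x\eta_{0,x}}{2})\,dx$ entering through a fundamental-theorem-of-calculus bound on $\chi^{1/2}\eta$, $\chi^{1/2}x\eta_x$ needed for the $\delta x\overline\rho\eta$ contribution. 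The only cosmetic difference is that in your first step you retain a $\frac{d}{d\tau}$ kinetic-energy structure for $\int\chi x^2\overline\rho\abs{\eta_\tau}{2}\,dx$, whereas the paper treats \subeqref{eq:perturbation-0}{1} at each fixed time as an elliptic identity (Lemma \ref{lm:elliptic-est-01}), estimating the inertial terms directly against $\mathcal E_0$ and $\mathcal D_0$, which in addition yields the pointwise-in-time bounds \eqref{ene:008}--\eqref{ene:009} reused in the later point-wise estimates; this variation does not affect the correctness of your argument for \eqref{propest:interior}.
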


Without loss of generality, we always assume $ \beta_1 > 1 $ in the following . 
We establish Proposition \ref{prop:interior-estimates} in the following two lemmas. 
First, the following lemma investigates the Elliptic structure of \subeqref{eq:perturbation-0}{1}:
\begin{lm}\label{lm:elliptic-est-01}
Under the same assumptions as in Proposition \ref{prop:interior-estimates}, we have
\begin{gather}
	\alpha^{-l_3} \int \chi \abs{\zeta}{2} \,dx + \int_0^\tau \alpha^{-l_3 - 1} \alpha_\tau \int \chi \abs{\zeta}{2} \,dx \,d\tau' \leq C_{r_1, \sigma_1} \mathcal E_{in} \nonumber \\
	 ~~~~ + C_{r_1,\sigma_1} \beta_1^{-\varsigma} \int_0^\tau \alpha^{-l_3 - 1} \alpha_\tau \int \chi \abs{\zeta}{2} \,dx\,d\tau' \nonumber\\
	 ~~~~ + C_{r_1,\sigma_1} ( \beta_1^{-\varsigma} + \omega + 1 ) \bigl( \mathcal E_0 + \mathcal D_0 \bigr), 	\label{lmest:elliptic-01-01}\\
	 \int_0^\tau \alpha^{r_3} \int \chi (\abs{\eta_\tau}{2} + \abs{x\eta_{x\tau}}{2} )\,dx \,d\tau' \leq C_{r_1,\sigma_1,\beta_1,\beta_2}  \bigl( \mathcal E_0 + \mathcal D_0 \bigr) \nonumber \\
	 ~~~~ + C_{r_1,\sigma_1} \beta_1^{-\varsigma} \int_0^\tau \alpha^{-l_3 - 1} \alpha_\tau \int \chi \abs{\zeta}{2} \,dx\,d\tau' , \label{lmest:elliptic-01-02} \\
		\int x^2 \bigl\lbrack (1+\eta)x\eta_{x\tau} - x\eta_{x} \eta_\tau \bigr\rbrack^2\,dx \leq C_{r_1,\sigma_1,\beta_1,\beta_2} \alpha^{- \mathfrak a} \mathcal E_0 , \label{ene:008} \\
		 \int \chi ( \abs{\eta_\tau}{2} + \abs{x\eta_{x\tau}}{2} ) \,dx \leq C_{r_1,\sigma_1,\beta_1,\beta_2} \alpha^{-\mathfrak b} \bigl( \mathcal E_0 \nonumber \\
		  + \sup_{0 \leq \tau' \leq \tau}\alpha^{-l_3}(\tau') \int \chi \abs{\zeta(\tau')}{2} \,dx \bigr), \label{ene:009} \\
	 \int\chi (\abs{\eta}{2} + \abs{x\eta_{x}}{2} )\,dx \leq  \int\chi (\abs{\eta_0}{2} + \abs{x\eta_{0,x}}{2} )\,dx + C_{r_1,\sigma_1,\beta_1,\beta_2} \bigl(\mathcal E_0 \nonumber \\
	 + \sup_{0 \leq \tau' \leq \tau}\alpha^{-l_3}(\tau') \int \chi \abs{\zeta(\tau')}{2} \,dx \bigr),\label{ene:011}
\end{gather}
for any given $ \tau \in (0,T) $, some constant $ C_{r_1,\sigma_1} $ depends only on $ r_1, \sigma_1 $, some constant $ C_{r_1,\sigma_1,\beta_1,\beta_2} $ depends only on $ r_1, \sigma_1,\beta_1,\beta_2 $, and some constant $ \varsigma > 0 $.
Here $ r_3, l_3 $ are given in \eqref{indices}, $ \mathfrak a = r_1 + \sigma_1 + 1, \mathfrak b = r_1 $.

In partucular, for $ \beta_1 $ large enough, \eqref{lmest:elliptic-01-01} and \eqref{lmest:elliptic-01-02} implies
\begin{equation}\label{lmest:elliptic-01-03}
	\begin{aligned}
		& \alpha^{-l_3} \int \chi \abs{\zeta}{2} \,dx + \int_0^\tau \alpha^{-l_3 - 1} \alpha_\tau \int \chi \abs{\zeta}{2} \,dx \,d\tau' \\
		& ~~~~ + \int_0^\tau \alpha^{r_3} \int \chi (\abs{\eta_\tau}{2} + \abs{x\eta_{x\tau}}{2} )\,dx \,d\tau' \leq C_{r_1,\sigma_1} \mathcal E_{in} \\
		& ~~~~ + C_{r_1,\sigma_1,\beta_1, \beta_2} \bigl( \mathcal E_0 + \mathcal D_0 \bigr). 
	\end{aligned}
\end{equation}
	
\end{lm}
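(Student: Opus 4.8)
The plan rests on the \emph{elliptic} character of \subeqref{eq:perturbation-0}{1} in the variable $ \eta_\tau $. First I would rewrite its viscous right-hand side: since $ \bigl(\tfrac{\eta_\tau}{1+\eta}\bigr)_x = \tfrac{1+\eta+x\eta_x}{x(1+\eta)}\,\mathfrak B $, a short computation gives $ \tfrac{4}{3}\mu\alpha^3\mathfrak B_x + 4\mu\alpha^3\bigl(\tfrac{\eta_\tau}{1+\eta}\bigr)_x = \tfrac{4}{3}\mu\alpha^3\,\tfrac{1}{x^3(1+\eta)^3}\,\partial_x\bigl(x^3(1+\eta)^3\mathfrak B\bigr) $. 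Collecting the remaining terms of \subeqref{eq:perturbation-0}{1} into $ \mathcal L := \tfrac{x\overline\rho}{(1+\eta)^2}(\alpha\eta_{\tau\tau}+\alpha_\tau\eta_\tau) - \alpha^{4-3\gamma}\tfrac{\delta x\overline\rho\eta}{1+\eta} + \alpha^{4-3\gamma}(\overline p q)_x $ and using $ x^3(1+\eta)^3\mathfrak B\to0 $ as $ x\to0 $, I would integrate once in $ x $ to obtain the pointwise identity
$$ x^3(1+\eta)^3\mathfrak B(x) = \frac{3}{4\mu\alpha^3}\int_0^x s^3(1+\eta)^3\mathcal L(s)\,ds , $$
and then integrate by parts in $ x $ to replace the $ \alpha^{4-3\gamma}(\overline p q)_x $--contribution by $ \alpha^{4-3\gamma}\bigl[x^3(1+\eta)^3\overline p q - 3\int_0^x s^2(1+\eta)^2(1+\eta+s\eta_s)\,\overline p q\,ds\bigr] $, trading the derivative of $ \overline p q $ (which we do not control pointwise) for $ \overline p q $ itself. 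Throughout I use that on $ \mathrm{supp}\,\chi\subset[0,3/4] $ one has $ \overline\rho\simeq1 $ and $ \overline p\simeq1 $ by \eqref{density:boundary} and $ \overline p_x=-\delta x\overline\rho $, and that $ \overline p q = \zeta[(1+\eta+x\eta_x)(1+\eta)^2]^{-\gamma} $ by \eqref{def:weighted-q}.

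With this representation, \eqref{ene:008} and the $ \mathfrak B $--part of \eqref{ene:009} follow by inserting it into $ \int x^2\mathfrak B^2\,dx $, respectively $ \int_0^{3/4}\mathfrak B^2\,dx $, and applying Cauchy--Schwarz with tailored powers of $ s $ followed by Fubini; each resulting integral is then dominated either by the $ x^2\overline\rho^{1/2} $--weighted norms of $ \eta_{\tau\tau},\eta_\tau,\eta $, hence by $ \mathcal E_0 $ since e.g. $ \alpha^2\!\int s^4\overline\rho^2\eta_{\tau\tau}^2\le\alpha^2\!\int s^4\overline\rho\,\eta_{\tau\tau}^2 $, or by $ \int_0^{3/4}(\overline p q)^2\,dx\lesssim \norm{\chi^{1/2}\zeta}{\Lnorm{2}}^2+\norm{x\zeta}{\Lnorm{2}}^2 $. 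Matching the time weights $ \mathfrak a=r_1+\sigma_1+1 $, $ \mathfrak b=r_1 $ is then a bookkeeping check using the index identities \eqref{indices}, the bounds $ \beta_1\alpha\le\alpha_\tau\le\beta_2\alpha $ and $ \alpha_{\tau\tau}\lesssim\beta_2^2\alpha $ from \eqref{expandingrate}--\eqref{expandingrate-02}, and the constraints $ r_1<6\gamma-6 $, $ 2\sigma_1<r_1 $. For the $ \eta_\tau $--part of \eqref{ene:009} I would first reduce $ \norm{\chi^{1/2}x\eta_{x\tau}}{\Lnorm{2}} $ to $ \norm{\chi^{1/2}\mathfrak B}{\Lnorm{2}}+\omega\norm{\chi^{1/2}\eta_\tau}{\Lnorm{2}} $ via the identity $ x\eta_{x\tau}=(1+\eta+x\eta_x)\mathfrak B+\tfrac{x\eta_x}{1+\eta}\eta_\tau $, and then estimate $ \norm{\chi^{1/2}\eta_\tau}{\Lnorm{2}} $ by a coercive elliptic estimate: testing \subeqref{eq:perturbation-0}{1} against a weighted cut-off of the schematic form $ \chi^2 x(1+\eta)^3\cdot\tfrac{\eta_\tau}{1+\eta} $ and integrating by parts, the viscous term produces, besides $ \alpha^3\int\chi^2 x^2 w_x^2 $ with $ w=\eta_\tau/(1+\eta) $, the \emph{unweighted} coercive quantity $ \alpha^3\int\chi^2 w^2 $; the $ \alpha_\tau $--term and the forcing (the $ \eta_{\tau\tau},\eta,\overline p q $ pieces) are absorbed by Young's inequality together with the $ \mathfrak B $--estimates just obtained, while the $ \chi' $--boundary terms, supported in $ [1/2,3/4] $ where $ x^4\overline\rho\simeq1 $, are controlled by $ \norm{x^2\overline\rho^{1/2}\eta_\tau}{\Lnorm{2}}^2\le\alpha^{-r_1}\mathcal E_0 $.

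For \eqref{ene:011}, from $ \tfrac{d}{d\tau}\!\int\chi(\eta^2+(x\eta_x)^2)\le 2\bigl(\int\chi(\eta^2+(x\eta_x)^2)\bigr)^{1/2}\bigl(\int\chi(\eta_\tau^2+(x\eta_{x\tau})^2)\bigr)^{1/2} $ one gets $ (\sqrt{f}\,)'\le\sqrt{g} $, and $ \int_0^\tau\sqrt{g}\,d\tau'\lesssim\bigl(\mathcal E_0+\sup_{\tau'}\alpha^{-l_3}\norm{\chi^{1/2}\zeta}{\Lnorm{2}}^2\bigr)^{1/2}\int_0^\tau\alpha^{-r_1/2}\,d\tau'\lesssim\beta_1^{-1}(\cdots)^{1/2} $ by \eqref{ineq:integral-time-weight}, since $ r_1>0 $. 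For \eqref{lmest:elliptic-01-01} I would take the $ L^2 $--inner product of \subeqref{eq:perturbation-0}{2} with $ \alpha^{-l_3}\chi^2\zeta $: the transport term yields $ \tfrac{d}{d\tau}\bigl\{\tfrac{\alpha^{-l_3}}{2}\int\chi^2\zeta^2\bigr\}+\tfrac{l_3}{2}\alpha^{-l_3-1}\alpha_\tau\int\chi^2\zeta^2 $, while the remaining terms, using $ \overline p\simeq1 $ on $ \mathrm{supp}\,\chi $, are bounded after Young's inequality by $ \varepsilon\alpha^{-l_3-1}\alpha_\tau\int\chi^2\zeta^2 $ (absorbed) plus $ C_\varepsilon\beta_1^{-1}\alpha^{-l_3}\int\chi^2(\eta_\tau^2+(x\eta_{x\tau})^2) $ plus a $ \mathfrak B^2 $--term, the latter two being integrated in $ \tau $ and estimated through \eqref{lmest:elliptic-01-02}. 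Estimate \eqref{lmest:elliptic-01-02} itself I would prove by integrating the interior $ \mathfrak B $-- and $ \eta_\tau $--bounds against $ \alpha^{r_3}\,d\tau $ and comparing with $ \mathcal D_0 $, e.g. $ \int_0^\tau\alpha^{r_1-4}\norm{x^2\overline\rho^{1/2}\eta_{\tau\tau}}{\Lnorm{2}}^2\,d\tau'\le\bigl(\sup\alpha^{-2-2\sigma_1}\bigr)\beta_1^{-1}\mathcal D_0 $, the only borderline contribution being exactly the term $ \beta_1^{-\varsigma}\int_0^\tau\alpha^{-l_3-1}\alpha_\tau\int\chi\zeta^2 $ kept on the right of \eqref{lmest:elliptic-01-02}; then \eqref{lmest:elliptic-01-03} is obtained by adding \eqref{lmest:elliptic-01-01} and \eqref{lmest:elliptic-01-02} and choosing $ \beta_1 $ large enough to absorb that term.

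\emph{The main obstacle} is the removal of the degeneracy at the center $ x=0 $: the natural energy identities control only $ x^4\overline\rho $-- (for $ \eta_\tau,\eta_{\tau\tau} $) or $ x^2 $-- (for $ \mathfrak B $) weighted quantities, so the coercive $ \alpha^3\int\chi^2\eta_\tau^2 $ must be extracted from the elliptic operator through a carefully weighted test function, and one must verify that the numerous cross terms it generates near $ x=0 $, in particular those carrying $ \eta_x $ and $ x\eta_{xx} $, either cancel --- as already happens in the pointwise representation of $ \mathfrak B $, where the apparent $ 1/x $ singularities of the $ \overline p q $-- and $ \int_0^x\overline p q $--terms cancel against each other --- or are absorbed using the a priori smallness \eqref{def:a-priori-assm} together with the $ \mathcal E_0 $--bounds.
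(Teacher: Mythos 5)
Your proposal is correct in substance, and for the heart of the lemma it coincides with the paper's own mechanism: the interior control of $ \eta_\tau $ comes from testing \subeqref{eq:perturbation-0}{1} against a cut-off times an $ x\eta_\tau $-type function (the paper uses $ \chi x\eta_\tau $, you use $ \chi^2 x(1+\eta)^2\eta_\tau $), where the piece $ \tfrac{8}{3}\mu\alpha^3 w_x $, $ w=\eta_\tau/(1+\eta) $, is integrated as $ \tfrac{4}{3}\mu\alpha^3(w^2)_x $ against the weight, producing exactly the unweighted coercive term you describe (this is \eqref{elpt-est-01:001}/\eqref{ene:005}); \eqref{lmest:elliptic-01-01} is then the same Gr\"onwall argument with $ \alpha^{-l_3}\chi\zeta $, \eqref{lmest:elliptic-01-02} the same $ \alpha^{r_3} $-integration against $ \mathcal D_0 $ with the borderline $ \zeta $-term absorbed for $ \beta_1 $ large, and \eqref{ene:011} the same FTC step. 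Where you genuinely diverge is \eqref{ene:008} and the $ \mathfrak B $-part of \eqref{ene:009}: the paper gets these by testing the equation directly against $ x^3(1+\eta)^2\eta_\tau $ and $ \chi x\eta_\tau $, so that $ \zeta $ and $ \eta_{\tau\tau} $ only ever appear paired with the weights already present in $ \mathcal E_0 $, whereas you invert the viscous operator once in $ x $ (the identity \eqref{ene:103}, which the paper reserves for the $ L^\infty $ bounds of Section 4) and estimate the resulting representation in weighted $ L^2 $. Your route works, but only because of the step you call ``tailored powers of $ s $'': plain Cauchy--Schwarz plus Fubini loses a logarithm at $ x=0 $ from the $ x^{-2} $ and $ x^{-3} $ prefactors, and would force $ \norm{\chi^{1/2}\zeta}{\Lnorm{2}}^2 $ (an $ \mathcal E_1 $-quantity) into \eqref{ene:008}, which the stated estimate excludes; with the $ \varepsilon $-shifted exponents (equivalently the $ k<1 $ Hardy inequality applied to $ \int_0^x s^2\abs{\zeta}{}\,ds $ and $ \int_0^x s^4\overline\rho\abs{\eta_{\tau\tau}}{}\,ds $) everything is indeed dominated by $ \mathcal E_0 $, and the exponent bookkeeping you sketch does close under \eqref{constraint-000}. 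Finally, the $ x\eta_{xx} $ cross terms you flag as the main obstacle never actually arise: if you differentiate only the weight $ \chi^2 x(1+\eta)^3 $ (whose derivative involves only $ \eta $, $ x\eta_x $, controlled by \eqref{def:a-priori-assm}) rather than the coefficient $ (1+\eta+x\eta_x)^{-1} $, no second spatial derivatives of $ \eta $ appear, so no appeal to the regularity estimates of Section \ref{sec:regularity} (which would be circular) is needed.
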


\begin{proof}
	After taking the $ L^2 $-inner product of \subeqref{eq:perturbation-0}{1} with $ \chi x \eta_\tau $ and applying integration by parts in the resultant, one has the following equation,
	\begin{equation}\label{elpt-est-01:001}
		\dfrac{4\mu}{3} \alpha^3 \int \chi \biggl\lbrace \dfrac{(\eta_\tau+x\eta_{x\tau})^2}{1+\eta+x\eta_x} + \dfrac{(1+\eta + x\eta_x)\eta_\tau^2}{(1+\eta)^2} \biggr\rbrace \,dx = I_{1} + I_2 + I_{3} + I_{4},
	\end{equation}
	where
	\begin{align*}
		& I_1 := - \dfrac{4\mu}{3} \alpha^3 \int \chi' x\eta_\tau \biggl( \dfrac{\eta_\tau+x\eta_{x\tau}}{1+\eta+x\eta} + \dfrac{\eta_\tau}{1+\eta} \biggr) \,dx \\
		& ~~~~ + \alpha^{4-3\gamma} \int \chi' \dfrac{x\zeta\eta_\tau}{ \bigl\lbrack (1+\eta+x\eta_x)(1+\eta)^2 \bigr\rbrack^\gamma} \,dx ,\\
		& I_2 := \alpha^{4-3\gamma} \int \chi \dfrac{\zeta(\eta_\tau +x\eta_{x\tau})}{\bigl\lbrack (1+\eta+x\eta_x)(1+\eta)^2 \bigr\rbrack^\gamma} \,dx,  \\
		& I_3 := - \alpha \int \chi \dfrac{x^2 \overline\rho \eta_{\tau\tau} \eta_\tau}{(1+\eta)^2} \,dx - \alpha_\tau 
		\int \chi \dfrac{x^2 \overline \rho \eta_\tau^2}{(1+\eta)^2} \,dx, \\
		& I_{4} := \alpha^{4-3\gamma} \delta \int \chi \dfrac{x^2 \overline\rho \eta \eta_\tau}{1+\eta} \,dx.
	\end{align*}
	Again, applying H\"older's and Young's inequalities to $ I_k $'s yields
	\begin{align*}
		& I_1 \lesssim \alpha^3 \bigg( \int x^4 \overline\rho \abs{\eta_\tau}{2} \,dx \biggr)^{1/2} \biggl( \int x^2 \bigl\lbrack (1+\eta) x\eta_{x\tau} - x\eta_x \eta_\tau\bigr\rbrack^2 \,dx \\
		& ~~~~ + \int x^4 \overline\rho \abs{\eta_\tau}{2} \,dx \biggr)^{1/2} + \alpha^{4-3\gamma} \biggl( \int x^2 \abs{\zeta}{2} \,dx \biggr)^{1/2} \bigg( \int x^4 \overline\rho \abs{\eta_\tau}{2} \,dx \biggr)^{1/2}, \\
		& I_2 \lesssim \varepsilon \alpha^{3} \int \chi ( \abs{\eta_\tau}{2} + \abs{x\eta_{x\tau}}{2} ) \,dx + C_\varepsilon \alpha^{5-6\gamma} \int \chi \abs{\zeta}{2} \,dx, \\
		& I_3 \lesssim \varepsilon \alpha^3 \int \chi \abs{\eta_\tau}{2} \,dx + C_\varepsilon \alpha^{-1} \int x^4 \overline\rho \abs{\eta_{\tau\tau}}{2} \,dx + C_\varepsilon \alpha^{-3} \alpha_\tau^2 \int x^4 \overline\rho \abs{\eta_\tau}{2} \,dx, \\
		& I_4 \lesssim \varepsilon \alpha^3 \int \chi \abs{\eta_\tau}{2} \,dx + C_\varepsilon \delta^2 \alpha^{5 - 6\gamma} \int x^4 \overline\rho \abs{\eta}{2} \,dx. 
	\end{align*}
	Consequently, after multiplying \eqref{elpt-est-01:001} with $ \alpha^{- 3} $
	, it follows
	\begin{equation}\label{ene:005}
		\begin{aligned}
		& \int \chi (\abs{\eta_\tau}{2} + \abs{x\eta_{x\tau}}{2} )\,dx \lesssim  \bigg( \int x^4 \overline\rho \abs{\eta_\tau}{2} \,dx \biggr)^{1/2} \\
		& ~~~~ \times \biggl( \int x^2 \bigl\lbrack (1+\eta) x\eta_{x\tau} - x\eta_x \eta_\tau\bigr\rbrack^2 \,dx + \int x^4 \overline\rho \abs{\eta_\tau}{2} \,dx \biggr)^{1/2} \\
		& ~~~~ + \alpha^{ - 3\gamma + 1} \biggl( \int x^2 \abs{\zeta}{2} \,dx \biggr)^{1/2} \bigg( \int x^4 \overline\rho \abs{\eta_\tau}{2} \,dx \biggr)^{1/2}\\
		& ~~~~ + \alpha^{ - 6\gamma + 2} \int\chi \abs{\zeta}{2} \,dx + \alpha^{-4} \int x^4 \overline\rho \abs{\eta_{\tau\tau}}{2} \,dx \\
		& ~~~~ + \alpha^{ - 6} \alpha_\tau^2 \int x^4 \overline\rho \abs{\eta_\tau}{2} \,dx + \delta^2 \alpha^{ - 6\gamma +2} \int x^4 \overline\rho \abs{\eta}{2} \,dx,
		\end{aligned}
	\end{equation}
	where we have chosen $ \varepsilon $ to be small enough. Meanwhile, taking the $ L^2 $-inner product of \subeqref{eq:perturbation-0}{2} with $ \alpha^{-l_3} \chi \zeta $ leads to
	\begin{equation}\label{ene:006}
		\dfrac{d}{dt} \biggl\lbrace \dfrac{\alpha^{-l_3}}{2}  \int \chi \abs{\zeta}{2} \,dx\biggr\rbrace + \dfrac{l_3}{2} \alpha^{-l_3-1} \alpha_\tau \int \chi \abs{\zeta}{2} \,dx
		 = L_{15} + L_{16},
	\end{equation}
	where
	\begin{align*}
		& L_{15} : = - \gamma \alpha^{-l_3} \int \chi \overline p \bigl\lbrack (1+\eta + x\eta_x )^{\gamma-1} (1+\eta)^{2\gamma} (\eta_\tau + x\eta_{x\tau}) \\
		& ~~~~ + 2 (1+\eta +x\eta_x)^\gamma (1+\eta)^{2\gamma-1} \eta_\tau 
		\bigr\rbrack \zeta \,dx \\
		& ~~~~ \lesssim \varepsilon  \alpha^{-l_3-1}\alpha_\tau \int \chi \abs{\zeta}{2}\,dx + C_\omega  \alpha^{-l_3+1}\alpha_\tau^{-1} \int \chi (\abs{\eta_\tau}{2} + \abs{x\eta_{x\tau}}{2})\,dx,\\
		& L_{16} := \dfrac{4\mu}{3}(\gamma-1) \alpha^{3\gamma - l_3 - 1} \int \chi \bigl\lbrack (1+\eta+x\eta_x) (1+\eta)^2 \bigr\rbrack^\gamma  \mathfrak B^2 \zeta \,dx \\
		& ~~~~ \lesssim \varepsilon \alpha^{-l_3-1}\alpha_\tau \int \chi \abs{\zeta}{2} \,dx + C_\varepsilon \omega \alpha^{6\gamma - l_3 - 2\sigma - 1}\alpha_\tau^{-1} \int\chi ( \abs{\eta_\tau}{2} + \abs{x\eta_{x\tau}}{2}  )\,dx. 
	\end{align*}
	Consequently, after integrating \eqref{ene:006} in $ \tau $ and substituting \eqref{ene:005}, with $ \varepsilon $ small enough, one has
		\begin{align*}
			& \sup_\tau \biggl\lbrace \dfrac{\alpha^{-l_3}}{2} \int \chi \abs{\zeta}{2}\,dx \biggr\rbrace + \int \alpha^{-l_3 - 1}\alpha_\tau \int \chi \abs{\zeta}{2} \,dx \,d\tau \\
			& ~~~~ \lesssim \bigl( \sup_\tau \alpha^{2\sigma - 6\gamma + 2 } + \omega \bigr) \biggl\lbrace  \sup_\tau( \alpha^{6\gamma - l_3 - 2\sigma -r_1-3/2 } \alpha_\tau^{-3/2} + \alpha^{6\gamma - l_3 - 2\sigma -r_1 } \alpha_\tau^{-2}) \\
			& ~~~~ \times \bigg( \int \alpha^{r_1-1}\alpha_\tau \int x^4 \overline\rho \abs{\eta_\tau}{2} \,dx \,d\tau \biggr)^{1/2}  \times \biggl( \int \alpha^{r_1 + 2}  \int x^2 \bigl\lbrack (1+\eta) x\eta_{x\tau} - x\eta_x \eta_\tau\bigr\rbrack^2 \,dx \,d\tau \\
			& ~~~~ ~~~~ + \int \alpha^{r_1-1} \alpha_\tau \int x^4 \overline\rho \abs{\eta_\tau}{2} \,dx \,d\tau  \biggr)^{1/2} \\
		& ~~~~ + \sup_\tau (\alpha^{6\gamma - l_3 - 2\sigma -r_1/2 + l_1/2 - 3\gamma + 1} \alpha_\tau^{-2}) \biggl( \int \alpha^{-l_1-1} \alpha_\tau \int x^2 \abs{\zeta}{2} \,dx \,d\tau \biggr)^{1/2} \\
		& ~~~~ ~~~~ \times  \bigg( \int \alpha^{r_1-1}\alpha_\tau \int x^4 \overline\rho \abs{\eta_\tau}{2} \,dx \,d\tau \biggr)^{1/2}\\
		& ~~~~ + \sup_\tau (\alpha^{-2\sigma + 2} \alpha_\tau^{-2}) \int \alpha^{-l_3 - 1} \alpha_\tau \int\chi \abs{\zeta}{2} \,dx\,d\tau \\
		& ~~~~ + \sup_\tau (\alpha^{6\gamma - l_3 -2\sigma -r_2-4} \alpha_\tau^{-2}) \int \alpha^{r_2 - 1} \alpha_\tau \int x^4 \overline\rho \abs{\eta_{\tau\tau}}{2} \,dx\,d\tau \\
		& ~~~~ + \sup_\tau \alpha^{6\gamma -l_3 - 2\sigma -r_1 - 6}  \int \alpha^{r_1-1}\alpha_\tau \int x^4 \overline\rho \abs{\eta_\tau}{2} \,dx \,d\tau \\
		& ~~~~ + \delta^2 \int  \alpha^{-l_3 - 2\sigma + 1}\alpha_\tau^{-1} \,d\tau \times \sup_\tau \int x^4 \overline\rho \abs{\eta}{2} \,dx \biggr\rbrace + \dfrac{\alpha^{-l_3}}{2} \int \chi \abs{\zeta}{2} \,dx \Big|_{\tau = 0} \\
		&  \lesssim \bigl( \sup_\tau \alpha^{2\sigma - 6\gamma + 2 } + \omega \bigr) \biggl\lbrace  \sup_\tau( \alpha^{6\gamma - l_3 - 2\sigma -r_1-3/2 } \alpha_\tau^{-3/2} + \alpha^{6\gamma - l_3 - 2\sigma -r_1 } \alpha_\tau^{-2}) \mathcal D_0 \\
		& ~~~~ + \sup_\tau (\alpha^{3\gamma - l_3 - 2\sigma -r_1/2 + l_1/2  + 1} \alpha_\tau^{-2}) \mathcal D_0  + \sup_\tau (\alpha^{-2\sigma + 2} \alpha_\tau^{-2}) \int \alpha^{-l_3 - 1} \alpha_\tau \int \chi \abs{\zeta}{2} \,dx\,d\tau \\
		& ~~~~ + \sup_\tau ( \alpha^{6\gamma - l_3 -2\sigma -r_2-4} \alpha_\tau^{-2}) \mathcal D_0 + \sup_\tau ( \alpha^{6\gamma -l_3 - 2\sigma -r_1 - 6}) \mathcal D_0 \\
		& ~~~~ + \delta^2  \mathcal E_0  \int \alpha^{-l_3 - 2\sigma + 1}\alpha_\tau^{-1} \,d\tau\biggr\rbrace + \dfrac{\alpha^{-l_3}}{2} \int \chi \abs{\zeta}{2} \,dx \Big|_{\tau = 0}.
		\end{align*}
	Then this finishes the proof of \eqref{lmest:elliptic-01-01} for $ \sigma = 1 $ and $ r_2, l_3 $ given in \eqref{indices}.
%
%
	
	 On the other hand, from \eqref{ene:005}, one has, 
	 \begin{align*}
	 	& \int \alpha^{r_3} \int \chi (\abs{\eta_\tau}{2} + \abs{x\eta_{x\tau}}{2} )\,dx \,d\tau \lesssim \sup_\tau ( \alpha^{r_3-r_1 - 1/2} \alpha_\tau^{-1/2} + \alpha^{r_3 - r_1 + 1} \alpha_\tau^{-1} ) \\
	 	& ~~~~ \times  \bigg( \int \alpha^{r_1-1}\alpha_\tau \int x^4 \overline\rho \abs{\eta_\tau}{2} \,dx\,d\tau  \biggr)^{1/2} \biggl(\int \alpha^{r_1+2} \int x^2 \bigl\lbrack (1+\eta) x\eta_{x\tau} - x\eta_x \eta_\tau\bigr\rbrack^2 \,dx \,d\tau \\
	 	& ~~~~ ~~~~ + \int  \alpha^{r_1-1} \alpha_\tau \int x^4 \overline\rho \abs{\eta_\tau}{2} \,dx \,d\tau \biggr)^{1/2} \\
		& ~~~~ + \sup_\tau (\alpha^{ r_3- 3\gamma + l_1/2 - r_1/2 + 2} \alpha_\tau^{-1}) \biggl(\int  \alpha^{-l_1-1} \alpha_\tau \int x^2 \abs{\zeta}{2} \,dx\,d\tau  \biggr)^{1/2} \\
		& ~~~~ ~~~~ \times \bigg( \int  \alpha^{r_1-1}\alpha_\tau \int x^4 \overline\rho \abs{\eta_\tau}{2} \,dx\,d\tau \biggr)^{1/2}\\
		& ~~~~ + \sup_\tau (\alpha^{r_3 - 6\gamma + l_3 + 3} \alpha_\tau^{-1}) \int \alpha^{-l_3-1}\alpha_\tau \int\chi \abs{\zeta}{2} \,dx\,d\tau \\
		& ~~~~  + \sup_\tau( \alpha^{r_3-r_2-3}\alpha_\tau^{-1})\int  \alpha^{r_2-1} \alpha_\tau \int x^4 \overline\rho \abs{\eta_{\tau\tau}}{2} \,dx \,d\tau \\
		& ~~~~ + \sup_\tau (\alpha^{r_3-r_1 - 5} \alpha_\tau )\int  \alpha^{r_1-1} \alpha_\tau \int x^4 \overline\rho \abs{\eta_\tau}{2} \,dx\,d\tau\\
		& ~~~~ + \delta^2 \int \alpha^{r_3 - 6\gamma +2}\,d\tau \times \sup_\tau \int x^4 \overline\rho \abs{\eta}{2} \,dx\\
		& ~~~~ \lesssim \sup_\tau ( \alpha^{r_3-r_1 - 1/2} \alpha_\tau^{-1/2} + \alpha^{r_3 - r_1 + 1} \alpha_\tau^{-1} ) \mathcal D_0 \\
		& ~~~~ + \sup_\tau (\alpha^{ r_3- 3\gamma + l_1/2 - r_1/2 + 2} \alpha_\tau^{-1}) \mathcal D_0 \\
		& ~~~~ + \sup_\tau (\alpha^{r_3 - 6\gamma + l_3 + 3} \alpha_\tau^{-1}) \int \alpha^{-l_3 - 1} \alpha_\tau \int \chi \abs{\zeta}{2} \,dx\,d\tau \\
		& ~~~~  + \sup_\tau( \alpha^{r_3-r_2-3}\alpha_\tau^{-1}) \mathcal D_0  + \sup_\tau (\alpha^{r_3-r_1 - 5} \alpha_\tau ) \mathcal D_0 \\
		& ~~~~ + \delta^2 \mathcal E_0 \int \alpha^{r_3 - 6\gamma +2}\,d\tau. 
	 \end{align*}
	 Then with $ l_1, r_2, l_3, r_3 $ given in \eqref{indices}, the above inequality yields \eqref{lmest:elliptic-01-02}.
	 
	 
%
	 
	 
	 In addition, after taking the $ L^2 $-inner product of \subeqref{eq:perturbation-0}{1} with $ x^3 (1+\eta)^2 \eta_\tau $, similar to \eqref{ene:001}, we have,
	 \begin{align*}
	 	& \dfrac{4 \mu }{3} \alpha^{3} \int \dfrac{x^2 \bigl\lbrack (1+\eta)x\eta_{x\tau} - x\eta_x \eta_\tau \bigr\rbrack^2}{1+\eta+x\eta_x} \,dx = \delta \alpha^{4-3\gamma} \int x^4 \overline\rho (1+\eta)\eta \eta_\tau\,dx \\
	 	& ~~~~ - \alpha^{4-3\gamma} \int \dfrac{x^2 (1+\eta) \bigl( (1+\eta) (\eta_\tau +x\eta_{x\tau}) + 2 (1+\eta+x\eta_x) \eta_\tau \bigr) \zeta}{\bigl\lbrack (1+\eta+x\eta_x) (1+\eta)^2 \bigr\rbrack^\gamma} \,dx\\
	 	& ~~~~ -  \alpha \int x^4 \overline\rho \eta_{\tau\tau} \eta_\tau \,dx - \alpha_\tau \int x^4 \overline\rho \abs{\eta_{\tau}}{2} \,dx \lesssim \delta \alpha^{-r_1/2 - 3\gamma + 4} \\
	 	& ~~~~ \times \biggl( \alpha^{r_1} \int x^4 \overline\rho \abs{\eta_\tau}{2}\,dx \biggr)^{1/2} \biggl( \int x^4 \overline\rho \abs{\eta}{2} \,dx \biggr)^{1/2} + \alpha^{l_1/2 +4-3\gamma} \\
	 	& ~~~~ \times \biggl( \alpha^{-l_1} \int x^2 \abs{\zeta}{2} \,dx \biggr)^{1/2} \biggl( \int x^2 \bigl\lbrack (1+\eta)x\eta_{x\tau} - x\eta_x \eta_\tau \bigr\rbrack^2 \,dx + \int x^4 \overline\rho \abs{\eta_\tau}{2} \,dx \biggr)^{1/2}\\
	 	& ~~~~ + \alpha^{1 - r_1/2 - r_2/2} \biggl( \alpha^{r_1} \int x^4 \overline\rho \abs{\eta_\tau}{2} \,dx \biggr)^{1/2} \biggl( \alpha^{r_2} \int x^4 \overline\rho \abs{\eta_{\tau\tau}}{2} \,dx\biggr)^{1/2} \\
	 	& ~~~~ + \alpha^{-r_1} \alpha_\tau  \times \alpha^{r_1} \int x^4 \overline\rho\abs{\eta_\tau}{2} \,dx 
	 	\lesssim  \varepsilon \alpha^3 \int x^2 \bigl\lbrack (1+\eta) x\eta_{x\tau} - x\eta_x \eta_\tau \bigr\rbrack^2 \, dx \\
	 	&~~~~ + \delta \alpha^{- {r_1} / 2 - 3\gamma + 4} \times \biggl( \alpha^{r_1} \int x^4 \overline\rho \abs{\eta_\tau}{2}\,dx \biggr)^{1/2} \biggl( \int x^4 \overline\rho \abs{\eta}{2} \,dx \biggr)^{1/2}\\
	 	& ~~~~ +C_\varepsilon \alpha^{l_1 - 6\gamma + 5} \times \alpha^{-l_1} \int x^2 \abs{\zeta}{2} \,dx \\
	 	& ~~~~ +  \alpha^{l_1/2-r_1/2- 3\gamma + 4} \biggl( \alpha^{r_1} \int x^4 \overline\rho \abs{\eta_\tau}{2} \, dx \biggr)^{1/2} \biggl( \alpha^{-l_1} \int x^2 \abs{\zeta}{2} \,dx \biggr)^{1/2}\\
	 	& ~~~~  + \alpha^{1 - {r_1}/2 - {r_2}/2}  \biggl( \alpha^{r_1} \int x^4 \overline\rho \abs{\eta_\tau}{2} \,dx \biggr)^{1/2} \biggl( \alpha^{r_2} \int x^4 \overline\rho \abs{\eta_{\tau\tau}}{2} \,dx \biggr)^{1/2} \\
	 	& ~~~~ + \alpha^{-r_1} \alpha_\tau  \times \alpha^{r_1} \int x^4 \overline\rho\abs{\eta_\tau}{2} \,dx,
	 \end{align*}
	 or, equivalently, after choosing $ \varepsilon $ small enough,
	 \begin{equation*}\tag{\ref{ene:008}}
	 	\begin{aligned}
	 	& \int x^2 \bigl\lbrack (1+\eta)x\eta_{x\tau} - x\eta_{x} \eta_\tau \bigr\rbrack^2\,dx \lesssim C_{\beta_1,\beta_2}\bigl( \delta \alpha^{-r_1/2 - 3\gamma + 1} + \alpha^{l_1 - 6\gamma + 2} \\
	 	& ~~~~ + \alpha^{l_1/2 - r_1/2 - 3\gamma + 1} + \alpha^{-r_1/2 - r_2/2 - 2} + \alpha^{-r_1 - 2}  \bigr) \mathcal E_0 \lesssim C_{\beta_1,\beta_2} \alpha^{-\mathfrak{a}} \mathcal E_0.
	 	\end{aligned}
	 \end{equation*}
	 With \eqref{ene:008} and \eqref{ene:005}, one has
	\begin{equation*}\tag{\ref{ene:009}}
	\begin{aligned}
	 	& \int \chi ( \abs{\eta_\tau}{2} + \abs{x\eta_{x\tau}}{2} ) \,dx \lesssim C_{\beta_1,\beta_2} \bigl( \alpha^{-r_1} + \alpha^{-r_1/2 - \mathfrak a/2 } \\
	 	& ~~~~ + \alpha^{-r_1/2 + l_1/2 -3\gamma+1} + \alpha^{l_3-6\gamma + 2} + \alpha^{-r_2 - 4} + \alpha^{-r_1 - 4} + \alpha^{-6\gamma + 2} \bigr) \\
	 	& ~~~~ \times \bigl( \mathcal E_0 + \sup_{\tau} \alpha^{-l_3} \int \chi \abs{\zeta}{2} \,dx \bigr) \lesssim C_{\beta_1,\beta_2} \alpha^{-\mathfrak{b}} \bigl( \mathcal E_0 + \sup_{\tau}\alpha^{-l_3} \int \chi \abs{\zeta}{2} \,dx \bigr) . 
	 \end{aligned}
	 \end{equation*}
	 It is easy to check
	 \begin{equation}\label{ene:010}
	 	\begin{aligned}
	 		& \mathfrak a = \min \bigl\lbrace 3\gamma + r_1/2 - 1, 6\gamma - l_1 - 2, 3\gamma +r_1 /2 - l_1/2 - 1, \\
	 		& ~~~~ ~~~~ r_1/2 + r_2/2 + 2, r_1 + 2  \bigr\rbrace = r_1 + \sigma_1 + 1  > 0 , \\
	 		& \mathfrak b = \min \bigl\lbrace r_1, r_1/2 + \mathfrak a / 2, 3\gamma + r_1/2 - l_1/2 - 1, 6\gamma - l_3 - 2\\
	 		& ~~~~ ~~~~ r_2 + 4, r_1 + 4, 6\gamma - 2  \bigr\rbrace = r_1 > 0. 
	 	\end{aligned}
	 \end{equation}
	 Therefore, after using the fundamental theorem of calculus, \eqref{ene:009} implies \eqref{ene:011}. 
%
%
%
\end{proof}


The next lemma is concerning the $ L^2 $ estimate of $ \eta_{\tau\tau} $ and $ \zeta_\tau $ in the interior domain. 
\begin{lm}\label{lm:interior-energy}
	Under the same assumptions as in Proposition \ref{prop:interior-estimates}, we have
	\begin{equation}\label{lmest:interior-energy}
		\begin{aligned}
			& \alpha^{-r_4} \int \chi x^2 \overline\rho \abs{\eta_{\tau\tau}}{2} \,dx + \alpha^{-l_4} \int \chi \abs{\zeta_\tau}{2} \,dx + \int_0^\tau \alpha^{-r_4 - 1} \alpha_\tau \int \chi x^2 \overline\rho \abs{\eta_{\tau\tau}}{2} \,dx\,d\tau' \\
			& ~~~~ + \int_0^\tau \alpha^{2-r_4} \int \chi \bigl( \abs{\eta_{\tau\tau}}{2} + \abs{x\eta_{x\tau\tau}}{2} \bigr) \,dx\,d\tau'  + \int_0^\tau \alpha^{-l_4 - 1} \alpha_\tau \int \chi \abs{\zeta_\tau}{2} \,dx\,d\tau' \\
			& ~~ \leq C_{r_1,\sigma_1} \mathcal E_{in} + C_{r_1,\sigma_1} \bigl( \beta_1^{-\varsigma} + \omega \bigr) \biggl( \int_0^\tau \alpha^{-l_4 - 1} \alpha_\tau \int \chi \abs{\zeta_\tau}{2} \,dx\,d\tau' \\
			& ~~~~ ~~~~ +  \int_0^\tau \alpha^{2-r_4} \int \chi \bigl( \abs{\eta_{\tau\tau}}{2} 
			 + \abs{x\eta_{x\tau\tau}}{2} \bigr) \,dx\,d\tau' \biggr) \\
			& ~~~~ + C_{r_1,\sigma_1, \beta_1, \beta_2}\biggl( \sup_\tau \int \chi \abs{\eta}{2} \,dx + \int_0^\tau \alpha^{r_3} \int \chi \bigl( \abs{\eta_\tau}{2} + \abs{x\eta_{x\tau}}{2} \bigr) \,dx \,d\tau' \\
			& ~~~~ ~~~~ + \int_0^\tau \alpha^{-l_3-1} \alpha_\tau \int \chi \abs{\zeta}{2} \,dx\,d\tau' + \mathcal D_0 \biggr),
		\end{aligned}
	\end{equation}
	for any given $ \tau \in (0,T) $, some constant $ C_{r_1,\sigma_1} $ depends only on $ r_1, \sigma_1 $,  some constant $ C_{r_1,\sigma_1,\beta_1,\beta_2} $ depends only on $ r_1, \sigma_1,\beta_1,\beta_2 $, and some constant $ \varsigma > 0 $.
	Here $ r_4, l_4 $ are given in \eqref{indices}. 
\end{lm}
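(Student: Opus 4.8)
The plan is to mirror, inside the interior region cut off by $\chi$, the combination of the $\partial_\tau$-level energy estimate (Lemma \ref{lm:d-tau}) and the $\chi$-localized coercivity estimate of Lemma \ref{lm:elliptic-est-01}, now applied to the once $\tau$-differentiated system \eqref{eq:perturbation-1}. Concretely there are two test-function computations followed by a bookkeeping step. For the $\eta_{\tau\tau}$-estimate I would take the $L^2$-inner product of \subeqref{eq:perturbation-1}{1} with $\alpha^{-r_4-1}\chi x\eta_{\tau\tau}$ (the $\tau$-analog of the multiplier $\chi x\eta_\tau$ used in \eqref{elpt-est-01:001}, with reciprocal weight since the interior quantities now grow in $\alpha$). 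The leading term $\alpha x\overline\rho\,\eta_{\tau\tau\tau}$ organizes as $\tfrac12\tfrac{d}{d\tau}\{\alpha^{-r_4}\int\chi x^2\overline\rho|\eta_{\tau\tau}|^2\,dx\}$ plus $\tfrac{r_4}{2}\alpha^{-r_4-1}\alpha_\tau\int\chi x^2\overline\rho|\eta_{\tau\tau}|^2\,dx$ (note $r_4=4-r_1-2\sigma_1>0$ by \eqref{constraint-000}), and $2\alpha_\tau x\overline\rho\,\eta_{\tau\tau}$ contributes a further damping of the same sign; in particular $\eta_{\tau\tau\tau}$ never needs to be controlled, which is why no independent elliptic identity for $\eta_{\tau\tau}$ (as in Lemma \ref{lm:elliptic-est-01}) is available or needed. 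Integrating by parts the viscous right-hand side $\tfrac43\mu\alpha^3(1+\eta)^2[\mathfrak B_{x\tau}+3(\eta_\tau/(1+\eta))_{x\tau}]$ against this multiplier produces, exactly as in \eqref{elpt-est-01:001}–\eqref{ene:005} (now at second $\tau$-order), the coercive interior dissipation $\gtrsim\alpha^{2-r_4}\int\chi(|\eta_{\tau\tau}|^2+|x\eta_{x\tau\tau}|^2)\,dx$, using the identity $\mathfrak B+3\tfrac{\eta_\tau}{1+\eta}=\tfrac{\eta_\tau+x\eta_{x\tau}}{1+\eta+x\eta_x}+\tfrac{2\eta_\tau}{1+\eta}$ and the a-priori smallness \eqref{def:a-priori-assm} to dominate the cross terms.

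The remaining terms from this first test fall into four groups, all handled by Hölder, Young, \eqref{est:eta}, \eqref{est:hardy-01} and the index relations \eqref{indices}. The $\chi'$-commutator terms are supported on $[1/2,3/4]$, a region bounded away from both $x=0$ and $x=1$ where $\overline\rho\simeq 1$, so they are dominated by the already-controlled global dissipation $\mathcal D_0$. The term $\alpha_{\tau\tau}x\overline\rho\,\eta_\tau$ is bounded via $\alpha_{\tau\tau}\lesssim\beta_2^2\alpha$ (\eqref{expandingrate-02}), Young, and \eqref{est:hardy-01}, giving $\mathcal E_0+\mathcal D_0$. The $\{\cdots\}_\tau$ terms carrying $\zeta_\tau,\zeta,\eta$ and $\eta_\tau,x\eta_{x\tau}$ are split so that their $\eta_{\tau\tau},x\eta_{x\tau\tau}$ content lands in the interior dissipation $\alpha^{2-r_4}\int\chi(|\eta_{\tau\tau}|^2+|x\eta_{x\tau\tau}|^2)$ with a small coefficient, while the rest is absorbed into $\mathcal D_0$, $\sup_\tau\int\chi|\eta|^2\,dx$, $\int_0^\tau\alpha^{r_3}\int\chi(|\eta_\tau|^2+|x\eta_{x\tau}|^2)$ and $\int_0^\tau\alpha^{-l_3-1}\alpha_\tau\int\chi|\zeta|^2$ — the last three being exactly the interior quantities bounded in Lemma \ref{lm:elliptic-est-01} (e.g.\ using $\mathfrak B\simeq\mathfrak C_1\simeq(1+\eta)(1+\eta+x\eta_x)\mathfrak B$ so that $\int\chi\mathfrak B^2\lesssim\int\chi(|\eta_\tau|^2+|x\eta_{x\tau}|^2)$). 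Finally the lower-order viscous terms $\tfrac83\mu\alpha^3(1+\eta)\eta_\tau[\mathfrak B_x+3(\cdots)_x]$ and $4\mu\alpha^2\alpha_\tau(1+\eta)^2[\mathfrak B_x+3(\cdots)_x]$ are integrated by parts once more; each then carries the decaying factor $\mathfrak B$ (hence a power $\alpha^{-\sigma_1}$ and a constant $\omega$), pairs with $\chi(\eta_{\tau\tau}+x\eta_{x\tau\tau})$, and is absorbed into $\varepsilon\cdot(\text{interior dissipation})+C_\varepsilon(\beta_1^{-\varsigma}+\omega)(\text{interior dissipation})+C_{\ldots}\int_0^\tau\alpha^{r_3}\int\chi(|\eta_\tau|^2+|x\eta_{x\tau}|^2)$, the exponents working out (after the Young split about the weight $\alpha^{2-r_4}$) precisely because $2-r_4-2\sigma_1=r_1-2<r_1=r_3$.

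For the $\zeta_\tau$-estimate I would take the $L^2$-inner product of \subeqref{eq:perturbation-1}{2} with $\alpha^{-l_4}\chi\zeta_\tau$, producing $\tfrac12\tfrac{d}{d\tau}\{\alpha^{-l_4}\int\chi|\zeta_\tau|^2\}+\tfrac{l_4}{2}\alpha^{-l_4-1}\alpha_\tau\int\chi|\zeta_\tau|^2$ on the left ($l_4=6\gamma-2-r_1-2\sigma_1>0$ since $\gamma>7/6$). The source $\gamma\overline p\{(1+\eta+x\eta_x)^{\gamma-1}(1+\eta)^{2\gamma}(\eta_\tau+x\eta_{x\tau})+2(1+\eta+x\eta_x)^{\gamma}(1+\eta)^{2\gamma-1}\eta_\tau\}_\tau$ contributes, via $\partial_\tau$, a $\eta_{\tau\tau},x\eta_{x\tau\tau}$-piece (absorbed into $\alpha^{2-r_4}\int\chi(|\eta_{\tau\tau}|^2+|x\eta_{x\tau\tau}|^2)$ with small coefficient, the compatibility $l_4=6\gamma-6+r_4$ making the weight match) and an $\eta_\tau,x\eta_{x\tau}$-piece (absorbed into $\int_0^\tau\alpha^{r_3}\int\chi(|\eta_\tau|^2+|x\eta_{x\tau}|^2)$ and the damping of $\zeta_\tau$). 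The viscous self-interaction $\tfrac43\mu(\gamma-1)\{\alpha^{3\gamma-1}[(1+\eta+x\eta_x)(1+\eta)^2]^\gamma\mathfrak B^2\}_\tau$ splits, on differentiating $\partial_\tau$, into $\alpha^{3\gamma-2}\alpha_\tau[\cdots]^\gamma\mathfrak B^2$, $\alpha^{3\gamma-1}[\cdots]^\gamma\cdot 2\mathfrak B\mathfrak B_\tau$, and $\alpha^{3\gamma-1}\partial_\tau([\cdots]^\gamma)\mathfrak B^2$; each carries $\omega$ (from $|\mathfrak B|\le\omega\alpha^{-\sigma}$) and, after the appropriate Young split (against $\alpha^{-l_4-1}\alpha_\tau|\zeta_\tau|^2$ and against $\alpha^{2-r_4}\int\chi\mathfrak B_\tau^2\lesssim\alpha^{2-r_4}\int\chi(|\eta_{\tau\tau}|^2+|x\eta_{x\tau\tau}|^2)+\omega^2\alpha^{r_1-2}\int\chi(|\eta_\tau|^2+|x\eta_{x\tau}|^2)$), produces only $\beta_1^{-1}\omega$-multiples of the interior dissipations plus $\int_0^\tau\alpha^{r_3}\int\chi(\cdots)$ — this is where $\gamma\in I_0$ together with \eqref{constraint-000} is used, guaranteeing that the powers $3\gamma-1-l_4-\sigma$, $6\gamma-1-l_4-2\sigma$ and the like collapse to $2-r_4$ (or lower) after the split and after invoking the time-integral gain \eqref{ineq:integral-time-weight}.

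Summing the two estimates and integrating in $\tau$, with $\varepsilon$ small, gives \eqref{lmest:interior-energy}; the final absorption of the $\beta_1^{-\varsigma}$- and $\omega$-multiples of $\int_0^\tau\alpha^{-l_4-1}\alpha_\tau\int\chi|\zeta_\tau|^2$ and $\int_0^\tau\alpha^{2-r_4}\int\chi(|\eta_{\tau\tau}|^2+|x\eta_{x\tau\tau}|^2)$, together with the substitution of the Lemma \ref{lm:elliptic-est-01} bounds for $\sup_\tau\int\chi|\eta|^2$, $\int_0^\tau\alpha^{r_3}\int\chi(|\eta_\tau|^2+|x\eta_{x\tau}|^2)$ and $\int_0^\tau\alpha^{-l_3-1}\alpha_\tau\int\chi|\zeta|^2$, is deferred to the proof of Proposition \ref{prop:interior-estimates}. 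The main obstacle is the exponent bookkeeping: in the interior the natural weights $\alpha^{-r_4},\alpha^{-l_4}$ are as large as roughly $6\gamma$, so every source term must be checked to acquire a net negative $\alpha$-power (or an extra $\beta_1^{-1}$) after its Young split and the time integration, and this only works because of the precise choices $r_4=2-r_2$, $l_4=6\gamma-6+r_4$ in \eqref{indices} and the two-sided constraint \eqref{constraint-000}; the secondary difficulty, already signalled above, is that — unlike in Lemma \ref{lm:elliptic-est-01} — no independent elliptic identity for $\eta_{\tau\tau}$ is available, so the full interior dissipation $\int\chi(|\eta_{\tau\tau}|^2+|x\eta_{x\tau\tau}|^2)$ has to be extracted from the coercivity of the viscous bilinear form inside the energy identity itself, which is what dictates the low-weight cut-off multiplier $\alpha^{-r_4-1}\chi x\eta_{\tau\tau}$.
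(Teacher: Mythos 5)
Your proposal is correct and follows essentially the same route as the paper: the same multipliers $\alpha^{-r_4-1}\chi x\eta_{\tau\tau}$ and $\alpha^{-l_4}\chi\zeta_\tau$ applied to the $\tau$-differentiated system \eqref{eq:perturbation-1}, the same extraction of the weighted energies, the damping terms with coefficients $(\tfrac{r_4}{2}+2)$ and $\tfrac{l_4}{2}$, the coercive viscous dissipation $\alpha^{2-r_4}\int\chi\bigl(\abs{\eta_{\tau\tau}}{2}+\abs{x\eta_{x\tau\tau}}{2}\bigr)\,dx$, and the same H\"older--Young bookkeeping with the indices \eqref{indices} sending the remainders into $\mathcal D_0$ and the interior quantities of Lemma \ref{lm:elliptic-est-01}. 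The only cosmetic differences (e.g.\ your extra integration by parts on the lower-order viscous terms, which the paper estimates directly as $L_{22}$--$L_{23}$, and routing the $\alpha_{\tau\tau}$ term through $\mathcal E_0+\mathcal D_0$ rather than the interior dissipation) do not change the argument.
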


\begin{proof}
	We are going to estimate the $ L^2 $ norm of $ \int \chi x^2 \overline\rho \abs{\eta_{\tau\tau}}{2} \,dx $ and $ \int \chi \abs{\zeta_\tau}{2} \,dx $ with appropriate time weights. After taking the $ L^2 $-inner product of \subeqref{eq:perturbation-1}{1} with $ \alpha^{-r_4-1} \chi x \eta_{\tau\tau} $ and \subeqref{eq:perturbation-1}{2}  with $ \alpha^{-l_4} \chi \zeta_\tau $, the resultant equations are
	\begin{equation}\label{ene:012}
	\begin{aligned}
		& \dfrac{d}{d\tau} \biggl\lbrace \dfrac{\alpha^{-r_4}}{2} \int \chi x^2 \overline\rho \abs{\eta_{\tau\tau}}{2} \,dx \biggr\rbrace + \biggl( \dfrac{r_4}{2} + 2\biggr) \alpha^{-r_4 - 1} \alpha_\tau  \int \chi x^2 \overline\rho \abs{\eta_{\tau\tau}}{2} \,dx \\
		& ~~~~ + \dfrac{4\mu}{3} \alpha^{2-r_4} \int \chi \biggl\lbrack \dfrac{(1+\eta)^2 x^2 \eta_{x\tau\tau}^2}{1+\eta+x\eta_x} + \dfrac{ \bigl\lbrack 2(1+\eta +x\eta_x)^2 - (1+\eta)^2 \bigr\rbrack \eta_{\tau\tau}^2}{1+\eta + x\eta_{x}} \biggr\rbrack\,dx\\
		& ~~ = \sum_{i=17}^{26} L_{i}, 
	\end{aligned}
	\end{equation}
	and
	\begin{equation}\label{ene:013}
		\begin{aligned}
			& \dfrac{d}{d\tau} \biggl\lbrace \dfrac{\alpha^{-l_4}}{2} \int \chi \abs{\zeta_\tau}{2}\,dx \biggr\rbrace + \dfrac{l_4}{2} \alpha^{-l_4-1} \alpha_\tau \int \chi \abs{\zeta_\tau}{2} \,dx = \sum_{i=27}^{30} L_i,
		\end{aligned}
	\end{equation}
	where
	\begin{align*}
		& L_{17} := - \alpha^{-r_4 - 1} \alpha_{\tau\tau} \int \chi x^2 \overline\rho\eta_\tau \eta_{\tau\tau} \,dx, \\
		& L_{18} := \delta \alpha^{3-3\gamma -r_4} \int \chi x^2 \overline\rho \bigl\lbrace (1+\eta)\eta \bigr\rbrace_\tau \eta_{\tau\tau} \,dx, \\
		& L_{19} := (4-3\gamma) \delta \alpha^{2 - 3\gamma - r_4} \alpha_\tau \int \chi x^2 \overline\rho (1+\eta) \eta \eta_{\tau\tau} \,dx, \\
		& L_{20} := \int \chi \biggl\lbrace 2 \alpha^{4-3\gamma} (1+\eta) x\eta_x \biggl( \dfrac{\zeta}{\bigl\lbrack (1+\eta+x\eta_x) (1+\eta)^2 \bigr\rbrack^\gamma} \biggr)  \biggr\rbrace_\tau \alpha^{-r_4 - 1} \eta_{\tau\tau} \,dx \\
		& ~~~~ + \int \chi \biggl\lbrace \alpha^{4-3\gamma} (1+\eta)^2 \biggl( \dfrac{\zeta}{\bigl\lbrack (1+\eta+x\eta_x) (1+\eta)^2 \bigr\rbrack^\gamma} \biggr)  \biggr\rbrace_\tau \alpha^{-r_4 - 1} (\eta_{\tau\tau} + x\eta_{x\tau\tau} )\,dx, \\
		& L_{21} := \int \chi' x \biggl\lbrace \alpha^{4-3\gamma} (1+\eta)^2 \biggl( \dfrac{\zeta}{\bigl\lbrack (1+\eta+x\eta_x) (1+\eta)^2 \bigr\rbrack^\gamma} \biggr) \biggr\rbrace_\tau \alpha^{-r_4 - 1} \eta_{\tau\tau} \,dx, \\
		& L_{22} := \dfrac{4}{3} \mu \alpha^{2-r_4} \int \chi \biggl\lbrack \dfrac{(\eta_\tau + x\eta_{x\tau})^2}{(1+\eta + x\eta_x)^2} + 2 \dfrac{\eta_\tau^2}{(1+\eta)^2} \biggr\rbrack \\
		& ~~~~ ~~~~ \times \bigl\lbrack (1+\eta)(1+\eta+ 2x\eta_x) \eta_{\tau\tau} + (1+\eta)^2 x\eta_{x\tau\tau} \bigr\rbrack \,dx \\
		& ~~~~ - \dfrac{8}{3} \mu \alpha^{2-r_4} \int \chi \biggl\lbrack \mathfrak B + 3 \biggl( \dfrac{\eta_\tau}{1+\eta} \biggr) \biggr\rbrack \times \biggl\lbrack (1+\eta + x\eta_x )\eta_\tau \eta_{\tau\tau} + (1+\eta) x\eta_{x\tau} \eta_{\tau\tau} \\
		& ~~~~ ~~~~ + (1+\eta) \eta_\tau x\eta_{x\tau\tau}  \biggr\rbrack  \,dx  , \\
		& L_{23} :=  - 4\mu \alpha^{1-r_4} \alpha_\tau \int \chi \biggl\lbrack \mathfrak B + 3 \biggl( \dfrac{\eta_\tau}{1+\eta} \biggr) \biggr\rbrack \times \biggl\lbrack (1+\eta)^2 \eta_{\tau\tau} + 2(1+\eta) x\eta_x \eta_{\tau\tau} + (1+\eta)^2 x\eta_{x\tau\tau}  \biggr\rbrack \,dx ,\\
		& L_{24} := - \dfrac{4}{3} \mu \alpha^{2-r_4} \int \chi' x \biggl\lbrace (1+\eta)^2 \biggl\lbrack \mathfrak B_\tau + 3 \biggl( \dfrac{\eta_\tau}{1+\eta} \biggr)_\tau  \biggr\rbrack - 2 (1+\eta) \eta_{\tau\tau} \biggr\rbrace \eta_{\tau\tau}  \,dx,  \\
		& L_{25}:= - \dfrac{8}{3} \mu \alpha^{2-r_4} \int \chi' x \biggl\lbrack \mathfrak B + 3 \biggl( \dfrac{\eta_\tau}{1+\eta} \biggr) \biggr\rbrack \times  (1+\eta) \eta_\tau\eta_{\tau\tau} \,dx , \\
		& L_{26} := - 4\mu \alpha^{1-r_4} \alpha_\tau \int \chi' x \biggl\lbrack \mathfrak B + 3 \biggl( \dfrac{\eta_\tau}{1+\eta} \biggr) \biggr\rbrack \times (1+\eta)^2 \eta_{\tau\tau} \,dx , \\
		& L_{27} := - \gamma \alpha^{-l_4} \int \chi \overline p (1+\eta + x\eta_x)^\gamma (1+\eta)^{2\gamma} \biggl( \dfrac{\eta_{\tau\tau} + x\eta_{x\tau\tau}}{1+\eta+x\eta_x} + 2 \dfrac{\eta_{\tau\tau}}{1+\eta}\biggr) \zeta_\tau \,dx ,\\
		& L_{28} := - \gamma \alpha^{-l_4} \int \chi \overline p \biggl\lbrace \bigl\lbrack (1+\eta_x + x\eta_x)^{\gamma-1} (1+\eta)^{2\gamma} \bigr\rbrack_\tau (\eta_\tau + x\eta_{x\tau}) \zeta_\tau \\
		& ~~~~ ~~~~ + \bigl\lbrack (1+\eta_x + x\eta_{x})^\gamma (1+\eta)^{2\gamma-1} \bigr\rbrack_\tau \eta_\tau \zeta_\tau \,dx ,\\
		& L_{29} := \dfrac{4}{3} \mu (\gamma-1) (3\gamma-1) \alpha^{3\gamma - l_4 - 2} \alpha_\tau \int \chi \bigl\lbrack (1+\eta_x +x\eta_x)(1+\eta)^2 \bigr\rbrack^\gamma \mathfrak B^2 \zeta_\tau \,dx, \\
		& L_{30} := \dfrac{4}{3} \mu (\gamma-1) \alpha^{3\gamma-l_4 - 1} \int \chi  \bigl\lbrace \bigl\lbrack (1+\eta_x +x\eta_x)(1+\eta)^2 \bigr\rbrack^\gamma  \mathfrak B^2 \bigr\rbrace_\tau \zeta_\tau \,dx.
	\end{align*}
	Next, we apply the H\"older's and Young's inequalities to estimate the right hand side of \eqref{ene:012} and \eqref{ene:013}. Indeed, we have, as before,
	\begin{align*}
		& \int L_{17} \,d\tau \lesssim \beta_2^2 \int \alpha^{-r_4} \int \chi x^2 \overline\rho \abs{\eta_\tau}{} \abs{\eta_{\tau\tau}}{} \,dx \,d\tau \\
		& ~~~~ \lesssim \varepsilon \int \alpha^{-r_4 - 1} \alpha_\tau \int \chi x^2 \overline \rho \abs{\eta_{\tau\tau}}{2} \,dx\,d\tau \\
		& ~~~~ ~~~~ + C_\varepsilon \beta_2^4 \beta_1^{-1} \sup_\tau \alpha^{-r_3 - r_4} \int \alpha^{r_3} \int \chi \abs{\eta_\tau}{2} \,dx\,d\tau, \\
		& \int L_{18} \,d\tau \lesssim \delta (1+\omega) \int \alpha^{3-3\gamma - r_4} \int \chi x^2 \overline \rho \abs{\eta_\tau}{} \abs{\eta_{\tau\tau}}{} \,dx\,d\tau \\
		& ~~~~ \lesssim \varepsilon \int \alpha^{-r_4 - 1}\alpha_\tau \int \chi x^2 \overline\rho \abs{\eta_{\tau\tau}}{2} \,dx \,d\tau \\
		& ~~~~ ~~~~ + C_\varepsilon \delta^2 \beta_1^{-1} \sup_\tau \alpha^{6 - 6\gamma - r_3 - r_4} \int \alpha^{r_3} \int \chi \abs{\eta_{\tau}}{2} \,dx\,d\tau , \\
		& \int L_{19} \,d\tau \lesssim \delta \int \alpha^{2-3\gamma-r_4}\alpha_\tau \int \chi x^2 \overline\rho \abs{\eta}{} \abs{\eta_{\tau\tau}}{} \,dx\,d\tau \\
		& ~~~~ \lesssim \varepsilon \int \alpha^{-r_4 - 1}\alpha_\tau \int \chi x^2 \overline \rho \abs{\eta_{\tau\tau}}{2} \,dx\,d\tau \\
		& ~~~~ ~~~~ + C_\varepsilon \delta^2 \beta_2 \int \alpha^{6-6\gamma - r_4} \,d\tau \times \sup_\tau \int \chi \abs{\eta}{2} \,dx , \\
		& \int L_{20} \,d\tau \lesssim  (1+\omega) \int \alpha^{2-3\gamma-r_4} \alpha_\tau \int \chi \abs{\zeta}{} ( \abs{\eta_{\tau\tau}}{} + \abs{x\eta_{x\tau\tau}}{} ) \,dx \,d\tau \\
		& ~~~~ ~~~~ + \omega \int \alpha^{3-3\gamma-r_4 - \sigma_1} \int \chi \abs{\zeta}{} ( \abs{\eta_{\tau\tau}}{} + \abs{x\eta_{x\tau\tau}}{} ) \,dx \,d\tau  \\
		& ~~~~ ~~~~ + (1+\omega) \int \alpha^{3-3\gamma - r_4} \int \chi \abs{\zeta_\tau}{} ( \abs{\eta_{\tau\tau}}{} + \abs{x\eta_{x\tau\tau}}{} ) \,dx \,d\tau \\
		& ~~~~ \lesssim \varepsilon \int \alpha^{2-r_4} \int \chi (\abs{\eta_{\tau\tau}}{2} + \abs{x\eta_{x\tau\tau}}{2} )\,dx \,d\tau \\
		& ~~~~ ~~~~ + C_\varepsilon \sup_\tau \bigl( \alpha^{3 - 6 \gamma +l_3 - r_4} \alpha_\tau + \omega \alpha^{5 - 6\gamma - 2 \sigma_1 + l_3 - r_4} \alpha_\tau^{-1}   \bigr) \int \alpha^{-l_3 - 1}\alpha_\tau \int \chi \abs{\zeta}{2} \,dx\,d\tau \\
		& ~~~~ ~~~~ + C_\varepsilon \sup_\tau ( \alpha^{5-6\gamma + l_4 - r_4} \alpha_\tau^{-1} ) \int \alpha^{-l_4-1} \alpha_\tau \int \chi \abs{\zeta_\tau}{2} \,dx\,d\tau , \\
		& \int L_{21} \,d\tau \lesssim (1+\omega) \int \alpha^{2-3\gamma - r_4} \alpha_\tau \int \abs{\chi'}{} x \abs{\zeta}{} \abs{\eta_{\tau\tau}}{} \,dx\,d\tau \\
		& ~~~~ ~~~~ + (1+\omega) \int \alpha^{3-3\gamma - r_4 } \int \abs{\chi'}{} x \abs{\zeta_\tau}{} \abs{\eta_{\tau\tau}}{} \,dx \,d\tau \\
		& ~~~~ ~~~~ +  \omega \int \alpha^{3-3\gamma-r_4 - \sigma_1} \int \abs{\chi'}{} x \abs{\zeta}{} \abs{\eta_{\tau\tau}}{} \,dx\,d\tau \\
		& ~~~~ \lesssim \sup_\tau \alpha^{3 - 3\gamma - r_4 - r_2/2 + l_1/2} \biggl( \int \alpha^{-l_1-1} \alpha_\tau \int x^2 \abs{\zeta}{2} \,dx\,d\tau \biggr)^{1/2} \\ & ~~~~ ~~~~ \times \biggl( \int \alpha^{r_2 - 1} \alpha_\tau \int x^4 \overline\rho \abs{\eta_{\tau\tau}}{2} \,dx\,d\tau \biggr)^{1/2} 
		+ \sup_\tau ( \alpha^{4 - 3\gamma - r_4 - r_2/2 + l_2/2} \alpha_\tau^{-1} ) \\
		& ~~~~ ~~~~ \times \biggl( \int \alpha^{-l_2- 1}\alpha_\tau \int x^2 \abs{\zeta_\tau}{2} \,dx\,d\tau \biggr)^{1/2} \biggl(  \int \alpha^{r_2 - 1} \alpha_\tau \int x^4 \overline\rho \abs{\eta_{\tau\tau}}{2} \,dx \,d\tau\biggr)^{1/2} \\
		& ~~~~ +  \omega \sup_\tau ( \alpha^{4 - 3\gamma - r_4 -r_2/2 + l_1/2 - \sigma_1} \alpha_\tau^{-1} )\biggl( \int \alpha^{-l_1-1} \alpha_\tau \int x^2 \abs{\zeta}{2} \,dx\,d\tau\biggr)^{1/2} \\
		& ~~~~ ~~~~ \times \biggl( \int \alpha^{r_2 -1} \alpha_\tau \int x^4 \overline \rho \abs{\eta_{\tau\tau}}{2} \,dx\,d\tau \biggr)^{1/2}  , \\
		& \int L_{22} \,d\tau \lesssim \omega \int \alpha^{2-r_4-\sigma_1} \int \chi ( \abs{\eta_\tau}{} + \abs{x\eta_{x\tau}}{}) ( \abs{\eta_{\tau\tau}}{} + \abs{x\eta_{x\tau\tau}}{} ) \,dx\,d\tau \\
		& ~~~~ \lesssim \varepsilon \int \alpha^{2-r_4} \int \chi  ( \abs{\eta_{\tau\tau}}{2} + \abs{x\eta_{x\tau\tau}}{2} ) \,dx\,d\tau \\
		& ~~~~ ~~~~ + C_\varepsilon \omega \sup_\tau \alpha^{2-r_4 - r_3 - 2\sigma_1} \int \alpha^{r_3} \int \chi ( \abs{\eta_\tau}{2} + \abs{x\eta_{x\tau}}{2} ) \,dx\,d\tau  , \\
		& \int L_{23} \,d\tau \lesssim (1+\omega) \int \alpha^{1-r_4}\alpha_\tau \int \chi (\abs{\eta_\tau}{} + \abs{x\eta_{x\tau}}{} )( \abs{\eta_{\tau\tau}}{} + \abs{x\eta_{x\tau\tau}}{} ) \,dx\,d\tau \\
		& ~~~~ \lesssim \varepsilon \int \alpha^{2-r_4} \int \chi ( \abs{\eta_{\tau\tau}}{2} + \abs{x\eta_{x\tau\tau}}{2}) \,dx\,d\tau \\
		& ~~~~ ~~~~ + C_\varepsilon \sup_\tau (\alpha^{-r_3-r_4} \alpha_\tau^2 ) \int \alpha^{r_3} \int \chi (\abs{\eta_\tau}{2} + \abs{x\eta_{x\tau}}{2}) \,dx\,d\tau , \\
		& \int L_{24} + L_{25} \,d\tau \lesssim (1+\omega) \int \alpha^{2-r_4} \int \abs{\chi'}{} x  ( \abs{\eta_{\tau\tau}}{} + \abs{x\eta_{x\tau\tau}}{} ) \abs{\eta_{\tau\tau}}{} \,dx\,d\tau\\
		& ~~~~ + (1+\omega) \int \alpha^{2-r_4} \int \abs{\chi'}{} x ( \abs{\eta_\tau}{2} + \abs{x\eta_{x\tau}}{2} ) \abs{\eta_{\tau\tau}}{} \,dx\,d\tau \\
		& ~~~~ \lesssim \sup_\tau ( \alpha^{3-r_2-r_4} \alpha_\tau^{-1} + \alpha^{3/2 - r_2 - r_4} \alpha_\tau^{-1/2} ) \biggl( \int \alpha^{r_2 - 1} \alpha_\tau \int x^4 \overline \rho \abs{\eta_{\tau\tau}}{2} \,dx \,d\tau \biggr)^{1/2} \\
		& ~~~~ ~~~~ \times \biggl( \int \alpha^{r_2 - 1} \alpha_\tau \int x^4 \overline\rho \abs{\eta_{\tau\tau}}{2} \,dx \,d\tau + \int \alpha^{r_2 +2} \int x^2 \bigl\lbrack (1+\eta)x\eta_{x\tau\tau} - x\eta_x \eta_{\tau\tau} \bigr\rbrack^2 \,dx \,d\tau \biggr)^{1/2} \\
		& ~~~~ + \omega \sup_{\tau} ( \alpha^{3-r_1/2 -r_2/2 - r_4 - \sigma_1} \alpha_\tau^{-1} + \alpha^{3/2 - r_1/2 - r_2/2 - r_4 - \sigma_1 } \alpha_\tau^{-1/2} ) \biggl( \int \alpha^{r_2 - 1} \alpha_\tau \int x^4 \overline\rho \abs{\eta_{\tau\tau}}{2} \,dx \,d\tau \biggr)^{1/2} \\
		& ~~~~ ~~~~ \times  \biggl( \int \alpha^{r_1 - 1} \alpha_\tau \int x^4 \overline \rho \abs{\eta_\tau}{2} \,dx\,d\tau + \int \alpha^{r_1 + 2} \int x^2 \bigl\lbrack (1+\eta) x\eta_{x\tau} - x\eta_x \eta_\tau \bigr\rbrack^2 \,dx\,d\tau \biggr)^{1/2}   , \\
		& \int L_{26} \,d\tau \lesssim    \sup_{\tau} ( \alpha^{2-r_1/2 -r_2/2 - r_4 } + \alpha^{1/2 - r_1/2 - r_2/2 - r_4  } \alpha_\tau^{1/2} ) \biggl( \int \alpha^{r_2 - 1} \alpha_\tau \int x^4 \overline\rho \abs{\eta_{\tau\tau}}{2} \,dx \,d\tau \biggr)^{1/2} \\
		& ~~~~ ~~~~ \times  \biggl( \int \alpha^{r_1 - 1} \alpha_\tau \int x^4 \overline \rho \abs{\eta_\tau}{2} \,dx\,d\tau + \int \alpha^{r_1 + 2} \int x^2 \bigl\lbrack (1+\eta) x\eta_{x\tau} - x\eta_x \eta_\tau \bigr\rbrack^2 \,dx\,d\tau \biggr)^{1/2}   , \\
		& \int L_{27} \,d\tau \lesssim  (1+\omega) \int \alpha^{-l_4} \int \chi \abs{\zeta_\tau}{} ( \abs{\eta_{\tau\tau}}{} + \abs{x\eta_{x\tau\tau}}{} ) \,dx\,d\tau \\
		& ~~~~ \lesssim \varepsilon \int \alpha^{-l_4 - 1} \alpha_\tau \int \chi \abs{\zeta_{\tau}}{2} \,dx\,d\tau \\
		& ~~~~ ~~~~ + C_\varepsilon \sup_\tau ( \alpha^{r_4 - l_4 - 1} \alpha_\tau^{-1} ) \times \int \alpha^{2-r_4} \int \chi (\abs{\eta_{\tau\tau}}{2} + \abs{x\eta_{x\tau\tau}}{2} )\,dx\,d\tau , \\
		& \int L_{28} \,d\tau  \lesssim \varepsilon \int \alpha^{-l_4 - 1} \alpha_\tau \int \chi \abs{\zeta_{\tau}}{2} \,dx\,d\tau \\
		& ~~~~ ~~~~ + C_\varepsilon \omega \sup_\tau ( \alpha^{-r_3 - l_4 - 2\sigma_1 + 1} \alpha_\tau^{-1} ) \times \int \alpha^{r_3} \int \chi (\abs{\eta_{\tau}}{2} + \abs{x\eta_{x\tau}}{2} )\,dx\,d\tau  , \\
		& \int L_{29} \,d\tau \lesssim  (1+\omega) \int \alpha^{3\gamma - l_4 - 2 } \alpha_\tau \int \chi \abs{\mathfrak B}{2}  \abs{\zeta_\tau}{} \,dx \,d\tau \\
		& ~~~~ \lesssim \varepsilon \int \alpha^{-l_4 - 1} \alpha_\tau \int \chi \abs{\zeta_\tau}{2} \,dx\,d\tau\\
		& ~~~~ ~~~~ + C_\varepsilon \omega \sup_\tau ( \alpha^{6\gamma - 3 - l_4 - r_3 - 2\sigma} \alpha_\tau ) \int \alpha^{r_3} \int \chi (\abs{\eta_\tau}{2} + \abs{x\eta_{x\tau}}{2} ) \,dx\,d\tau  , \\
		& \int L_{30} \,d\tau \lesssim  (1+\omega) \int \alpha^{3\gamma - l_4 - 1} \int \chi ( \abs{\eta_\tau}{2} + \abs{x\eta_{x\tau}}{2} ) \abs{\mathfrak B}{} \abs{\zeta_\tau}{} \,dx \,d\tau \\
		& ~~~~ + (1+\omega) \int \alpha^{3\gamma - l_4 - 1} \int \chi \abs{\mathfrak B}{}  ( \abs{\eta_{\tau\tau}}{} + \abs{x\eta_{x\tau\tau}}{} ) \abs{\zeta_\tau}{} \,dx\,d\tau \\
		& ~~~~ \lesssim \varepsilon \int \alpha^{-l_4 - 1} \alpha_\tau \int \chi \abs{\zeta_\tau}{2} \,dx\,d\tau + C_\varepsilon \omega \sup_\tau ( \alpha^{6\gamma - l_4 -r_3 - 2\sigma - 2\sigma_1 - 1} \alpha_\tau^{-1}) \\
		& ~~~~ \times \int \alpha^{r_3} \int \chi (\abs{\eta_\tau}{2} + \abs{x\eta_{x\tau}}{2} ) \,dx\,d\tau + C_\varepsilon \omega \sup_\tau ( \alpha^{6\gamma - l_4 +r_4 - 2\sigma -3} \alpha_\tau^{-1} ) \\
		& ~~~~ \times \int \alpha^{2-r_4} \int \chi ( \abs{\eta_{\tau\tau}}{2} + \abs{x\eta_{x\tau\tau}}{2} ) \,dx\,d\tau .
	\end{align*}
Therefore, integrating \eqref{ene:012} and \eqref{ene:013} with respect to $ \tau $ yields the lemma, with small enough $ \varepsilon $ and the indices given in \eqref{indices}. 
%
\end{proof}

\begin{proof}[Proof of Proposition \ref{prop:interior-estimates}]
	For $ \beta_1 $ large enough, $ \omega $ small enough, \eqref{propest:interior} is the consequence of the estimates in \eqref{ene:011}, \eqref{lmest:elliptic-01-03} and \eqref{lmest:interior-energy}.  This finishes the proof of the proposition. 
\end{proof}

\section{Point-wise estimates}\label{sec:point-wise}

In this section, we aim at closing the a priori assumption in \eqref{def:a-priori-assm}. In fact, from sections \ref{sec:energy-estimate} and \ref{sec:interior}, we already have the boundedness of the total energy and dissipation functionals. In fact, summarizing Proposition \ref{prop:energy-estimates} and Propostion \ref{prop:interior-estimates} implies:
\begin{prop}[Total energy estimates]\label{prop:total-functional}
	Consider a smooth enough solution $ (\eta, \zeta) $ to system \eqref{eq:perturbation-0}, satisfying \eqref{bc:perturbation} and \eqref{initial:perturbation}. Suppose that \eqref{def:a-priori-assm} is satisfied with $ \omega \in (0,1) $ small enough, and that the expanding rate $ \beta_1 = \alpha_1 $ in \eqref{expandingrate} is large enough, we have
\begin{equation}\label{propest:total}
	\begin{aligned}
	& \mathcal E_0 (T) + \mathcal E_1(T) + \mathcal D_0(T) + \mathcal D_1(T) \leq C_{r_1,\sigma_1,\beta_1,\beta_2} \mathcal E_{in} \\
	& ~~~~ + C_{r_1,\sigma_1,\beta_1,\beta_2} \int \chi \bigl( \abs{\eta_0}{2} + \abs{x\eta_{0,x}}{2} \bigr) \,dx ,
	\end{aligned}
\end{equation}
for any positive time $ T \in (0,\infty) $. Here $ C_{r_1,\sigma_1, \beta_1,\beta_2 }$ is some constant depending on $ r_1, \sigma_1, \beta_1, \beta_2 $. 
\end{prop}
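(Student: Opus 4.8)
The plan is to derive Proposition~\ref{prop:total-functional} as an immediate consequence of the two preceding propositions, with no new estimates. First I would fix the constants: choose $\omega\in(0,1)$ small enough and $\beta_1=\alpha_1$ large enough so that the smallness/largeness requirements of \emph{both} Proposition~\ref{prop:energy-estimates} and Proposition~\ref{prop:interior-estimates} are met simultaneously (each imposes only finitely many such conditions, so the admissible range is again nonempty). Under the a priori assumption \eqref{def:a-priori-assm} with this $\omega$, both propositions then apply to the given smooth solution $(\eta,\zeta)$ of \eqref{eq:perturbation-0}, for every $T\in(0,\infty)$.

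Second, I would invoke Proposition~\ref{prop:energy-estimates} to obtain
\begin{equation*}
	\mathcal E_0(T)+\mathcal D_0(T)\leq C_{r_1,\sigma_1,\beta_1,\beta_2}\,\mathcal E_{in}.
\end{equation*}
Third, I would invoke Proposition~\ref{prop:interior-estimates}, which bounds $\mathcal E_1(T)+\mathcal D_1(T)$ by
$C_{r_1,\sigma_1,\beta_1,\beta_2}\bigl(\mathcal E_{in}+\int\chi(|\eta_0|^2+|x\eta_{0,x}|^2)\,dx\bigr)$ together with the extra term
$C_{r_1,\sigma_1,\beta_1,\beta_2}\bigl(\mathcal E_{in}+\mathcal E_0(T)+\mathcal D_0(T)\bigr)$. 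Substituting the bound from the second step for this last occurrence of $\mathcal E_0(T)+\mathcal D_0(T)$ eliminates the dependence on the full functional $\mathcal E_0+\mathcal D_0$ and leaves only $\mathcal E_{in}$ and the interior initial-data integral on the right.

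Finally, adding the resulting estimate for $\mathcal E_1(T)+\mathcal D_1(T)$ to the estimate for $\mathcal E_0(T)+\mathcal D_0(T)$ and absorbing the finitely many constants into a single $C_{r_1,\sigma_1,\beta_1,\beta_2}$ gives \eqref{propest:total}. The only point deserving a word of care — and the nearest thing to an ``obstacle'' — is that the constant in Proposition~\ref{prop:interior-estimates} multiplying $\mathcal E_0(T)+\mathcal D_0(T)$ is \emph{not} assumed small, so the substitution genuinely requires the \emph{absolute} bound $\mathcal E_0+\mathcal D_0\lesssim\mathcal E_{in}$ of \eqref{propest:energy}, rather than a self-improving inequality with a $\mathcal D_0$ left to be absorbed; this is exactly what Proposition~\ref{prop:energy-estimates} supplies. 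One may also note in passing that $\int\chi(|\eta_0|^2+|x\eta_{0,x}|^2)\,dx\leq\norm{\eta_0}{\Lnorm{2}}^2+\norm{x\eta_{0,x}}{\Lnorm{2}}^2<\infty$ by the regularity of the initial data in \eqref{initial:perturbation}, so the right-hand side of \eqref{propest:total} is finite.
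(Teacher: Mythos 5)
Your proposal is correct and coincides with the paper's argument: the paper obtains Proposition \ref{prop:total-functional} precisely by "summarizing" Propositions \ref{prop:energy-estimates} and \ref{prop:interior-estimates}, i.e. substituting the absolute bound $\mathcal E_0(T)+\mathcal D_0(T)\leq C_{r_1,\sigma_1,\beta_1,\beta_2}\mathcal E_{in}$ from \eqref{propest:energy} into the right-hand side of \eqref{propest:interior} and adding the two estimates, exactly as you do. Your remark that this requires the genuine a priori bound on $\mathcal E_0+\mathcal D_0$ (not an absorption argument) is the right point of care, and nothing further is needed.
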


In the following, $ \beta_1, \beta_2 $ are fixed according to Proposition \ref{prop:total-functional}. 
In order to present the point-wise estimates properly, we split the estimates into four steps. Also, to shorten the notation, in the following, we use the constant $ C $ to denote a generic constant depending on $ r_1, \sigma_1, \beta_1, \beta_2 $, which may be different from line to line. Also, the symbol $ A \lesssim B $ is used to denote $ A \leq C B $ for the constant $ C $ as described. 

{\bf\par\noindent Step 1:} Point-wise estimate of $ \zeta $ in term of $ \mathfrak B $. 
After integrating \subeqref{eq:perturbation-0}{2} in $ \tau $-variable, one has
\begin{equation}\label{ene:101}
\begin{aligned}
	& \abs{\zeta}{} = \biggl| \zeta_0 - \overline p \bigl\lbrace  (1+\eta + x\eta_{x})^\gamma (1+\eta)^{2\gamma} -   (1+\eta_0 + x\eta_{0,x})^\gamma (1+\eta_0)^{2\gamma} \bigr\rbrace\\
	& ~~~~ + \dfrac{4}{3} \mu(\gamma-1) \int_0^\tau \alpha^{3\gamma - 1}\bigl\lbrack (1+\eta + x\eta_{x})(1+\eta)^2 \bigr\rbrack^\gamma \mathfrak B^2 \,d\tau \biggr|\\
	& ~~ \lesssim \abs{\zeta_0}{} +\overline p \bigl(  \abs{ \eta_0}{} + \abs{x\eta_{0,x}}{} + \abs{\eta}{} + \abs{x\eta_{x}}{} \bigr) + \int_0^\tau \alpha^{3\gamma-1} \abs{\mathfrak B}{2} \,d\tau ,
\end{aligned}
\end{equation}
which implies
\begin{equation}\label{ene:102}
\begin{aligned}
	& \norm{\zeta}{\Lnorm{\infty}} \lesssim \norm{\zeta_0}{\Lnorm{\infty}} + \norm{\eta}{\Lnorm{\infty}} + \norm{x\eta_x}{\Lnorm{\infty}} + \int_0^\tau \alpha^{3\gamma-1} \norm{\mathfrak B}{\Lnorm{\infty}}^2 \,d\tau
	\\
	& ~~~~ \lesssim \norm{\zeta_0}{\Lnorm{\infty}} + \norm{\eta}{\Lnorm{\infty}} + \norm{x\eta_x}{\Lnorm{\infty}} + \omega \int_0^\tau \alpha^{3\gamma-1- \sigma} \norm{\mathfrak B}{\Lnorm{\infty}} \,d\tau. 
\end{aligned}
\end{equation}
{\bf\par\noindent Step 2:} Point-wise estimate of $ \mathfrak B $. Notice, the right-hand side of \subeqref{eq:perturbation-0}{1} can be written as
\begin{equation*}
	\dfrac{4}{3} \mu \alpha^3 \biggl( \dfrac{\eta_\tau + x\eta_{x\tau}}{1+\eta + x\eta_x} + 2 \dfrac{\eta_\tau}{1+\eta} \biggr)_x = \dfrac{4}{3} \mu \alpha^3 \biggl( \dfrac{\bigl( x^3 (1+\eta)^3 \bigr)_{x\tau}}{\bigl( x^3 (1+\eta)^3 \bigr)_x} \biggr)_x. 
\end{equation*}
After multiplying \subeqref{eq:perturbation-0}{1} with $ x^3 (1+\eta)^3 $ and integrating the resultant in the interval $ (0,x) $, for $ x \in (0,1] $, it follows
\begin{equation}\label{ene:103}
	\begin{aligned}
		& \dfrac{4}{3} \mu \alpha^3 \underbrace{\biggl\lbrack \dfrac{\bigl( x^3 (1+\eta)^3 \bigr)_{x\tau}}{\bigl( x^3 (1+\eta)^3 \bigr)_x} \times x^3 (1+\eta)^3 - \int_0^x \dfrac{\bigl( x^3 (1+\eta)^3 \bigr)_{x\tau}}{\bigl( x^3 (1+\eta)^3 \bigr)_x} \times \bigl( x^3 (1+\eta)^3 \bigr)_x \,dx \biggr\rbrack}_{(i)} \\
		& ~~ = \alpha^{4-3\gamma} \underbrace{\biggl\lbrack \dfrac{\zeta \times x^3 (1+\eta)^3  }{\bigl\lbrack (1+\eta + x\eta_x) (1+\eta)^2 \bigr\rbrack^\gamma} - \int_0^x \dfrac{\zeta \times \bigl(x^3 (1+\eta)^3 \bigr)_x }{\bigl\lbrack (1+\eta + x\eta_x) (1+\eta)^2 \bigr\rbrack^\gamma} \,dx \biggr\rbrack}_{(ii)} \\
		& ~~~~ +\alpha  \underbrace{\int_0^x x^4\overline\rho (1+\eta) \partial_\tau^2 \eta \,dx}_{(iii)} + \alpha_\tau \underbrace{\int_0^x x^4\overline\rho (1+\eta) \partial_\tau \eta \,dx}_{(iv)} - \alpha^{4-3\gamma} \underbrace {\delta \int_0^x x^4 \overline\rho \eta (1+\eta)^2 \,dx}_{(v)}.
	\end{aligned}
\end{equation}
Then direct calculation shows
\begin{align*}
	& (i) = x^3 (1+\eta)^3 \mathfrak B,\\
	& \abs{(ii)}{}\lesssim x^3 (1+\eta)^3 \norm{\zeta}{\Lnorm{\infty}}, \\
	& \abs{(iii)}{} \lesssim x^3 (1+\eta)^3 \biggl( \int_0^x x^2 \overline\rho \abs{\partial_{\tau}^2 \eta}{2} \,dx \biggr)^{1/2} \lesssim x^3 (1+\eta)^3 \biggl( \int \chi x^2 \overline\rho \abs{\partial_{\tau}^2 \eta}{2} \,dx \\
	& ~~~~ + \int x^4 \overline\rho \abs{\partial_{\tau}^2 \eta}{2} \,dx \biggr)^{1/2} \lesssim x^3 (1+\eta)^3 \times ( \alpha^{-r_2/2} + \alpha^{r_4/2} ) ( \mathcal E_0 + \mathcal E_1)^{1/2} ,\\
	& \abs{(iv)}{} \lesssim x^3 (1+\eta)^3 \times ( \alpha^{-r_1/2} + \alpha^{-\mathfrak b/2} ) ( \mathcal E_0 + \mathcal E_1)^{1/2}, \\
	& \abs{(v)}{} \lesssim x^3(1+\eta)^3 \norm{\eta}{\Lnorm{\infty}}
\end{align*}
where we have applied \eqref{ene:009}. Consequently, \eqref{ene:103} yields
\begin{equation}\label{ene:104}
	\begin{aligned}
		& \norm{\mathfrak B}{\Lnorm{\infty}} \lesssim \alpha^{1-3\gamma} \bigl( \norm{\zeta}{\Lnorm{\infty}} + \norm{\eta}{\Lnorm{\infty}} \bigr) + \alpha^{r_4/2-2} (1+\beta_2) \bigl( \mathcal E_0 + \mathcal E_1 \bigr)^{1/2}.
	\end{aligned}
\end{equation}

{\bf\par\noindent Step 3:} Point-wise estimate of $ \norm{\alpha^{\sigma_1}\eta_\tau}{\Lnorm{\infty}} $ and $ \norm{\alpha^{\sigma_1}x\eta_{x\tau}}{\Lnorm{\infty}} $. After integrating \subeqref{eq:perturbation-0}{1} in the interval $ (x,1) $, for $ x \in (0,1) $, it follows
\begin{equation}\label{ene:105}
	\begin{aligned}
		& \dfrac{4}{3} \mu \alpha^3 \biggl\lbrack \dfrac{\eta_\tau + x\eta_{x\tau}}{1+\eta + x\eta_x} + 2 \dfrac{\eta_\tau}{1+\eta} \biggr\rbrack = 4\mu \alpha^3 \underbrace{\dfrac{\eta_\tau}{1+\eta}\Big|_{x=1}}_{(vi)} + \alpha^{4-3\gamma} \underbrace{\dfrac{\zeta}{\bigl\lbrack(1+\eta +x\eta_x)(1+\eta)^2 \bigr\rbrack^\gamma}}_{(vii)} \\
		& ~~~~ + \alpha^{4-3\gamma} \delta \underbrace{\int_x^1 \dfrac{x\overline\rho \eta}{1+\eta}\,dx }_{(viii)} - \alpha \underbrace{\int_x^1 \dfrac{x\overline\rho \partial_\tau^2 \eta}{(1+\eta)^2}\,dx}_{(ix)}- \alpha_\tau \underbrace{\int_x^1 \dfrac{x\overline\rho \partial_\tau \eta}{(1+\eta)^2}\,dx}_{(x)}.
	\end{aligned}
\end{equation}
Then, after applying the Sobolev imbedding inequality, H\"older's inequality, \eqref{ene:008} and \eqref{ene:009}, directly we have
\begin{align*}
	& \abs{(vi)}{} \lesssim \bigl( \int_{1/2}^1 ( \abs{\eta_\tau}{2} + \abs{\eta_{x\tau}}{2} ) \,dx \bigr)^{1/2} \lesssim \biggl( \int x^4 \overline\rho \abs{\eta_\tau}{2} \,dx + \int x^2 \bigl\lbrack (1+\eta) x\eta_{x\tau} - x\eta_x \eta_\tau\bigr\rbrack^2 \,dx \biggr)^{1/2} \\
	& ~~~~ \lesssim ( \alpha^{-r_1/2} + \alpha^{-\mathfrak a/2} ) \mathcal E_0^{1/2}, \\
	& \abs{(vii)}{} \lesssim \norm{\zeta}{\Lnorm{\infty}}, \\
	& \abs{(viii)}{} \lesssim  \norm{\eta}{\Lnorm{\infty}}, \\
	& \abs{(ix)}{} \lesssim \bigl( \int \chi x^2 \overline\rho \abs{\partial_\tau^2 \eta}{2} \,dx + \int x^4 \overline\rho \abs{\partial_\tau^2 \eta}{2} \,dx \bigr)^{1/2} \lesssim \alpha^{r_4/2} \bigl(\mathcal E_0 + \mathcal E_1\bigr)^{1/2} ,\\
	& \abs{(x)}{} \lesssim \bigl( \int \chi \abs{\partial_\tau \eta}{2} \,dx + \int x^4 \overline\rho \abs{\partial_\tau \eta}{2} \,dx \bigr)^{1/2} \lesssim ( \alpha^{-\mathfrak b/2} + \alpha^{-r_1/2} )\bigl(\mathcal E_0 + \mathcal E_1\bigr)^{1/2} .
\end{align*}
Consequently, \eqref{ene:105} yields
\begin{equation}\label{ene:106}
	\begin{aligned}
		& \norm{\dfrac{\eta_\tau + x\eta_{x\tau}}{1+\eta + x\eta_x}+ 2 \dfrac{\eta_\tau}{1+\eta}}{\Lnorm{\infty}} \lesssim (1+\beta_2)(\alpha^{-r_1/2} + \alpha^{-\mathfrak a /2} + \alpha^{r_4/2 - 2} 
		) \\ & ~~~~ \times \bigl( \mathcal E_0 + \mathcal E_1 \bigr)^{1/2} 
		 + \alpha^{1-3\gamma} \bigl( \norm{\zeta}{\Lnorm{\infty}} + \norm{\eta}{\Lnorm{\infty}} \bigr).
	\end{aligned}
\end{equation}
On the other hand, applying the fundamental theorem of calculus gives us the estimates
\begin{equation}\label{ene:107}
\begin{gathered}
	\norm{\eta}{\Lnorm{\infty}} \leq \norm{\eta_0}{\Lnorm{\infty}} + \int_0^\tau \norm{\eta_\tau}{\Lnorm{\infty}} \,d\tau, \\
	\norm{x\eta_x}{\Lnorm{\infty}} \leq \norm{x\eta_{0,x}}{\Lnorm{\infty}} + \int_0^\tau \norm{x\eta_{x\tau}}{\Lnorm{\infty}} \,d\tau.
\end{gathered}
\end{equation}
Therefore, combining \eqref{ene:102}, \eqref{ene:104}, \eqref{ene:106} and \eqref{ene:107} yields the following estimate
\begin{align*}
	& \norm{\eta_\tau}{\Lnorm{\infty}} + \norm{x\eta_{x\tau}}{\Lnorm{\infty}} \lesssim (1+\beta_2) ( \alpha^{r_4/2-2} + \alpha^{-r_1/2} + \alpha^{-\mathfrak a/2} ) \bigl( \mathcal E_0 + \mathcal E_1 \bigr)^{1/2}\\
	& ~~~~ + \alpha^{1-3\gamma}  \biggl( \norm{\zeta_0}{\Lnorm{\infty}} + \norm{\eta_0}{\Lnorm{\infty}} + \norm{x\eta_{0,x}}{\Lnorm{\infty}} \\
	& ~~~~ + \int_0^\tau (1+ \omega \alpha^{3\gamma - 1 - \sigma})( \norm{\eta_\tau}{\Lnorm{\infty}} + \norm{x\eta_x}{\Lnorm{\infty}} ) \,d\tau \biggr),
\end{align*}
or equivalently,
\begin{equation}\label{ene:108}
\begin{aligned}
	& \norm{\alpha^{\sigma_1} \eta_\tau}{\Lnorm{\infty}} + \norm{ \alpha^{\sigma_1} x\eta_{x\tau}}{\Lnorm{\infty}} \lesssim (1+\beta_2) ( \alpha^{\sigma_1 + r_4/2-2} + \alpha^{\sigma_1 -r_1/2} \\
	& ~~~~ + \alpha^{\sigma_1 -\mathfrak a/2} ) \bigl( \mathcal E_0 + \mathcal E_1 \bigr)^{1/2}
	 + \alpha^{1-3\gamma + \sigma_1} \bigl(  \norm{\zeta_0}{\Lnorm{\infty}} + \norm{\eta_0}{\Lnorm{\infty}} + \norm{x\eta_{0,x}}{\Lnorm{\infty}}\bigr) \\
	& ~~~~ + \sup\lbrace \alpha^{1-3\gamma + \sigma_1}, \alpha^{1-3\gamma}, \omega \alpha^{ - \sigma} \rbrace \int_0^\tau ( \norm{ \alpha^{\sigma_1} \eta_\tau}{\Lnorm{\infty}} + \norm{ \alpha^{\sigma_1} x\eta_x}{\Lnorm{\infty}} ) \,d\tau.
\end{aligned}
\end{equation}
Consequently, by applying Gr\"onwall's inequaltiy, we concludes from \eqref{ene:108} that
\begin{equation}\label{ene:109}
	\begin{aligned}
	& \norm{\alpha^{\sigma_1} \eta_\tau}{\Lnorm{\infty}} + \norm{\alpha^{\sigma_1}x\eta_{x\tau}}{\Lnorm{\infty}} \lesssim \norm{\zeta_0}{\Lnorm{\infty}} + \norm{\eta_0}{\Lnorm{\infty}} + \norm{x\eta_{0,x}}{\Lnorm{\infty}}\\
	& ~~~~ + \bigl(\mathcal E_0 + \mathcal E_1\bigr)^{1/2},
	\end{aligned}
\end{equation}
provided
\begin{gather*}
	\sigma_1 + r_4/2 - 2 < 0, \sigma_1 - r_1/2 < 0, \sigma_1 - \mathfrak a/2 < 0,\\
	1 - 3\gamma + \sigma_1 < 0,
\end{gather*}
or, after substituting the indices in \eqref{indices}
\begin{gather}
	2\sigma_1 < r_1. \label{constraint-90}
\end{gather}
%

{\bf\par\noindent Step 4:} Point-wise estimate of $ \norm{\alpha^{\sigma}\mathfrak B}{\Lnorm{\infty}} $. From \eqref{ene:102}, \eqref{ene:104} and \eqref{ene:107}, one can derive
\begin{align*}
	& \norm{\mathfrak B}{\Lnorm{\infty}} \lesssim (1+\beta_2) \alpha^{r_4/2-2} \bigl( \mathcal E_0 + \mathcal E_1 \bigr)^{1/2} + \alpha^{1-3\gamma} \biggl( \norm{\zeta_0}{\Lnorm{\infty}} + \norm{\eta_0}{\Lnorm{\infty}} \\
	& ~~~~ + \norm{x\eta_{0,x}}{\Lnorm{\infty}} 
	 + \int_0^\tau ( \norm{\eta_\tau}{\Lnorm{\infty}} + \norm{x\eta_{x\tau}}{\Lnorm{\infty}} )\,d\tau + \omega \int_0^\tau \alpha^{3\gamma - 1 -\sigma} \norm{\mathfrak B}{\Lnorm{\infty}} \,d\tau   \biggr),
\end{align*}
or equivalently, after substituting \eqref{ene:109},
\begin{equation}\label{ene:110}
	\begin{aligned}
		& \norm{\alpha^\sigma \mathfrak B}{\Lnorm{\infty}} \lesssim (1+\beta_2) \alpha^{\sigma + r_4/2 - 2}\bigl( \mathcal E_0 + \mathcal E_1\bigr)^{1/2} \\
		& ~~~~ + \alpha^{\sigma + 1 - 3\gamma} \biggl( \norm{\zeta_0}{\Lnorm{\infty}} + \norm{\eta_0}{\Lnorm{\infty}} + \norm{x\eta_{0,x}}{\Lnorm{\infty}} + \bigl( \mathcal E_0 + \mathcal E_1\bigr)^{1/2} \biggr) \\
		& ~~~~ + \max\lbrace \omega \alpha^{\sigma + 1 - 3\gamma}, \omega\alpha^{-\sigma} \rbrace \int_0^\tau \norm{\alpha^\sigma \mathfrak B}{\Lnorm{\infty}} \,d\tau .
	\end{aligned}
\end{equation}
From \eqref{ene:110}, as before, we conclude
\begin{equation}\label{ene:111}
	\norm{\alpha^\sigma \mathfrak B}{\Lnorm{\infty}} \lesssim \norm{\zeta_0}{\Lnorm{\infty}} + \norm{\eta_0}{\Lnorm{\infty}} + \norm{x\eta_{0,x}}{\Lnorm{\infty}} + \bigl( \mathcal E_0 + \mathcal E_1 \bigr)^{1/2},
\end{equation}
provided
\begin{gather*}
	\sigma + r_4/2 - 2 < 0, \sigma + 1 - 3\gamma < 0, \sigma > 0,
\end{gather*}
or, after substituting the indices in \eqref{indices}
\begin{equation}\label{constraint-91}
	2 - r_1 < 2\sigma_1.
\end{equation}
Notice, \eqref{constraint-90} and \eqref{constraint-91} yield the constraint in \eqref{constraint-000}.

We summarize the estimate above in the following proposition:
\begin{prop}[Closing the a priori estimate]\label{prop:point-wise-0}
With the indices given as in \eqref{indices}, as long as the constraint \eqref{constraint-000} is satisfied, we have
\begin{equation}\label{propest:point-wise-1}
	\begin{aligned}
		& \max \lbrace \norm{\eta}{\Lnorm{\infty}}, \norm{x\eta_x}{\Lnorm{\infty}}, \norm{\alpha^{\sigma_1} \eta_\tau}{\Lnorm{\infty}}, \norm{\alpha^{\sigma_1} x\eta_{x\tau}}{\Lnorm{\infty}}, \norm{\alpha^\sigma \mathfrak B}{\Lnorm{\infty}} \rbrace \\
		& ~~~~ \leq C_{r_1,\sigma_1,\beta_1,\beta_2} \bigl(\norm{\zeta_0}{\Lnorm{\infty}} + \norm{\eta_0}{\Lnorm{\infty}} + \norm{x\eta_{0,x}}{\Lnorm{\infty}} + \bigl( \mathcal E_0 + \mathcal E_1 \bigr)^{1/2} \bigr),
	\end{aligned}
\end{equation}
under the same assumptions as in Proposition \ref{prop:total-functional}. Here $ C_{r_1,\sigma_1,\beta_1,\beta_2} $ is a constant depending on $ r_1, \sigma_1, \beta_1, \beta_2 $. 
In particular, the constraint \eqref{constraint-000} requires 
\begin{equation}\label{constraint-000-gamma}
	\gamma > \dfrac{7}{6}, ~ \text{i.e.} ~ \gamma \in I_0,
\end{equation}
where $ I_0 $ is given in \eqref{thmfull:gamma-intervals}.
\end{prop}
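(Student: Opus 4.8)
The plan is to prove the four point-wise bounds collected in \eqref{propest:point-wise-1} in a chain, in the order $\zeta \to \mathfrak B \to (\eta_\tau, x\eta_{x\tau}) \to \alpha^\sigma\mathfrak B$, each step using the previous one together with the already-established total energy bound of Proposition \ref{prop:total-functional}. First, integrating the transport equation \subeqref{eq:perturbation-0}{2} in $\tau$ writes $\zeta$ as $\zeta_0$, minus an $\overline p$-weighted increment of $\bigl[(1+\eta+x\eta_x)(1+\eta)^2\bigr]^\gamma$, plus $\tfrac{4}{3}\mu(\gamma-1)\int_0^\tau\alpha^{3\gamma-1}\bigl[\cdots\bigr]\mathfrak B^2\,d\tau$; taking $L^\infty$ and using the a priori smallness of $\eta, x\eta_x$ together with $\norm{\alpha^\sigma\mathfrak B}{\Lnorm{\infty}}\le\omega$ bounds $\norm{\zeta}{\Lnorm{\infty}}$ by $\norm{\zeta_0}{\Lnorm{\infty}}+\norm{\eta}{\Lnorm{\infty}}+\norm{x\eta_x}{\Lnorm{\infty}}+\omega\int_0^\tau\alpha^{3\gamma-1-\sigma}\norm{\mathfrak B}{\Lnorm{\infty}}\,d\tau$.

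Second, I would exploit the one-dimensional elliptic (ODE-in-$x$) structure of \subeqref{eq:perturbation-0}{1}: multiplying by $x^3(1+\eta)^3$ and integrating on $(0,x)$ turns the viscous term into exactly $\tfrac{4}{3}\mu\alpha^3 x^3(1+\eta)^3\mathfrak B$, since the right-hand side equals $\tfrac{4}{3}\mu\alpha^3\bigl((x^3(1+\eta)^3)_{x\tau}/(x^3(1+\eta)^3)_x\bigr)_x$. Dividing out $x^3(1+\eta)^3$ controls $\norm{\mathfrak B}{\Lnorm{\infty}}$ by $\alpha^{1-3\gamma}\bigl(\norm{\zeta}{\Lnorm{\infty}}+\norm{\eta}{\Lnorm{\infty}}\bigr)$ plus $\alpha^{r_4/2-2}$ times weighted $L^2$ norms of $\partial_\tau^2\eta$ and $\partial_\tau\eta$, which are in turn bounded by $(\mathcal E_0+\mathcal E_1)^{1/2}$ after invoking \eqref{ene:008}--\eqref{ene:009} and the definitions of the functionals. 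Third, integrating \subeqref{eq:perturbation-0}{1} on $(x,1)$ instead and using the boundary condition $\mathfrak B|_{x=1}=0$ from \eqref{bc:perturbation} gives a point-wise formula for $\tfrac{\eta_\tau+x\eta_{x\tau}}{1+\eta+x\eta_x}+2\tfrac{\eta_\tau}{1+\eta}$ in terms of a boundary trace of $\eta_\tau/(1+\eta)$, of $\zeta$, and of weighted integrals of $\partial_\tau^2\eta,\partial_\tau\eta$; combining this with the fundamental-theorem-of-calculus bounds $\norm{\eta}{\Lnorm{\infty}}\le\norm{\eta_0}{\Lnorm{\infty}}+\int_0^\tau\norm{\eta_\tau}{\Lnorm{\infty}}\,d\tau$ and the analogue for $x\eta_x$, one obtains a closed Gr\"onwall inequality for $\norm{\alpha^{\sigma_1}\eta_\tau}{\Lnorm{\infty}}+\norm{\alpha^{\sigma_1}x\eta_{x\tau}}{\Lnorm{\infty}}$, whose kernel is a convergent time-integral once the relevant exponents are negative. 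Finally, substituting the resulting bound back into the $\mathfrak B$-estimate of Step~2 produces a second Gr\"onwall inequality, now for $\norm{\alpha^\sigma\mathfrak B}{\Lnorm{\infty}}$, which closes the estimate.

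The main difficulty is not any individual estimate but the bookkeeping of the powers of $\alpha$ (equivalently of $e^{\beta_1\tau}$): the forcing terms in Step~3 decay and the Gr\"onwall kernel is integrable precisely when $\sigma_1+r_4/2-2<0$, $\sigma_1-r_1/2<0$, $\sigma_1-\mathfrak a/2<0$ and $1-3\gamma+\sigma_1<0$, which, after substituting \eqref{indices} and $\sigma=1$, collapse to $2\sigma_1<r_1$; the analogous requirement in Step~4, namely $\sigma+r_4/2-2<0$, $\sigma+1-3\gamma<0$, $\sigma>0$, collapses to $2-r_1<2\sigma_1$. Together with $r_1<\min\{6\gamma-6,2\}$ --- already needed in the energy and interior estimates so that $l_1,l_2,l_3,l_4$ carry the correct signs --- these are exactly the constraints in \eqref{constraint-000}, and their simultaneous solvability forces $1<r_1<\min\{6\gamma-6,2\}$, hence $6\gamma-6>1$, i.e. $\gamma>7/6$. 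Thus the argument concludes by checking that the admissible range \eqref{constraint-000} is nonempty precisely when $\gamma\in I_0$, which is \eqref{constraint-000-gamma}.
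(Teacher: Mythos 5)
Your proposal is correct and follows essentially the same route as the paper: the same chain $\zeta \to \mathfrak B \to (\eta_\tau, x\eta_{x\tau}) \to \alpha^\sigma\mathfrak B$, obtained by integrating \subeqref{eq:perturbation-0}{2} in $\tau$, multiplying \subeqref{eq:perturbation-0}{1} by $x^3(1+\eta)^3$ and integrating on $(0,x)$, integrating \subeqref{eq:perturbation-0}{1} on $(x,1)$ with the boundary condition \eqref{bc:perturbation}, and closing with two Gr\"onwall arguments. Your exponent bookkeeping reproduces \eqref{constraint-90}--\eqref{constraint-91}, hence \eqref{constraint-000}, and your solvability check $1<r_1<\min\{6\gamma-6,2\}$ correctly yields $\gamma>7/6$.
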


Next, we are going to show the boundedness of the perturbation variable $ q $ for the pressure. In particular, this shows that if the self-similar solution has bounded entropy, so will the perturbed solution do. From the definition of $ \zeta $ in \eqref{def:weighted-q}, this is equivalent to show the boundedness of $ \zeta / \overline p $. 
\begin{prop}[Boundedness of  $ \zeta / \overline p $]\label{prop:boundedness-entropy} With the indices given as in \eqref{indices}, as long as the constraint
\begin{equation}\label{constraint-001}
	\begin{gathered}
	\max\lbrace 3\gamma - 3 - r_1, 2 - r_1 \rbrace < 2 \sigma_1 < r_1 < \min\lbrace 6 \gamma - 6, 2\rbrace, \\
	\dfrac{7}{6} < \gamma < \dfrac{7}{3}, ~ \text{i.e.} ~\gamma \in I_1
	\end{gathered}
\end{equation}
is satisfied, we have, $ \exists \varepsilon_0 \in (0,1) $, 
\begin{equation}\label{propest:point-wise-2}
	\begin{aligned}
	& \norm{\dfrac{\zeta}{\overline p}}{\Lnorm{\infty}(\varepsilon_0,1)} \leq C_{r_1,\sigma_1,\beta_1,\beta_2} \biggl\lbrace  \norm{\dfrac{\zeta_0}{\overline p}}{\Lnorm{\infty}} + \norm{\eta_0}{\Lnorm{\infty}} + \norm{x\eta_{0,x}}{\Lnorm{\infty}} + \norm{\dfrac{\zeta_0}{\overline p}}{\Lnorm{\infty}}^2 \\
	& ~~~~ + \norm{\eta_0}{\Lnorm{\infty}}^2 
	 + \norm{x\eta_{0,x}}{\Lnorm{\infty}}^2 
	+  \bigl( \mathcal E_0 + \mathcal E_1\bigr)^{1/2}+\bigl( \mathcal E_0 + \mathcal E_1\bigr)\biggr\rbrace,
	\end{aligned}
\end{equation}
and
\begin{equation}\label{propest:point-wise-21}
	\begin{aligned}
		& \norm{\dfrac{\zeta}{\overline p}}{\Lnorm{\infty}} \leq C_{r_1,\sigma_1,\beta_1,\beta_2} \alpha^{3\gamma - 3} \biggl\lbrace  \norm{\dfrac{\zeta_0}{\overline p}}{\Lnorm{\infty}} + \norm{\eta_0}{\Lnorm{\infty}} + \norm{x\eta_{0,x}}{\Lnorm{\infty}} + \norm{\dfrac{\zeta_0}{\overline p}}{\Lnorm{\infty}}^2 \\
	& ~~~~ + \norm{\eta_0}{\Lnorm{\infty}}^2 
	 + \norm{x\eta_{0,x}}{\Lnorm{\infty}}^2 
	+  \bigl( \mathcal E_0 + \mathcal E_1\bigr)^{1/2} + \bigl( \mathcal E_0 + \mathcal E_1 \bigr) \biggr\rbrace,
	\end{aligned}
\end{equation}
provided that $ \norm{q_0}{\Lnorm{\infty}} $ is small enough. 
If in addition,
\begin{equation}\label{constraint-002}
	\begin{gathered}
	\max \lbrace 3\gamma - 1 - r_1, 2 - r_1 \rbrace < 2\sigma_1 < r_1 < \min\lbrace 6 \gamma - 6, 2\rbrace, \\
	\dfrac{11}{9} < \gamma < \dfrac{5}{3}, ~ \text{i.e.} ~ \gamma \in I_2,
	\end{gathered}
\end{equation}
we have
\begin{equation}\label{propest:point-wise-3}
	\begin{aligned}
	& \norm{\dfrac{\zeta}{\overline p}}{\Lnorm{\infty}} \leq C_{r_1,\sigma_1,\beta_1,\beta_2}\biggl\lbrace  \norm{\dfrac{\zeta_0}{\overline p}}{\Lnorm{\infty}} + \norm{\eta_0}{\Lnorm{\infty}} + \norm{x\eta_{0,x}}{\Lnorm{\infty}} + \norm{\dfrac{\zeta_0}{\overline p}}{\Lnorm{\infty}}^2 \\
	& ~~~~ + \norm{\eta_0}{\Lnorm{\infty}}^2 
	 + \norm{x\eta_{0,x}}{\Lnorm{\infty}}^2 
	+  \bigl( \mathcal E_0 + \mathcal E_1\bigr)^{1/2} + \bigl( \mathcal E_0 + \mathcal E_1\bigr) \biggr\rbrace,
	\end{aligned}
\end{equation}
provided that $ \norm{q_0}{\Lnorm{\infty}} $ is small enough.
Here $ C_{r_1,\sigma_1,\beta_1,\beta_2} $ is a constant depending on $ r_1, \sigma_1, \beta_1, \beta_2 $, and $ C_{r_1,\sigma_1,\beta_1,\beta_2,\varpi} $ is a constant depending on $ r_1, \sigma_1, \beta_1, \beta_2, \varpi $.

\end{prop}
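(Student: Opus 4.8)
The plan is to reduce the whole statement to controlling a single scalar quantity, the cumulative viscous heating $ J(x,\tau):=\int_0^\tau\alpha^{3\gamma-1}(\tau')\,\tfrac{\mathfrak B^2}{\overline p}(x,\tau')\,d\tau' $, in three successively stronger senses, and then to estimate $ J $ by feeding the elliptic reconstruction of $ \mathfrak B $ back into itself. First I would integrate the identity \eqref{eq:entropy} in $ \tau $ and use the definition \eqref{def:weighted-q}: this expresses $ \zeta/\overline p $ pointwise as $ (1+q_0)\bigl[(1+\eta_0+x\eta_{0,x})(1+\eta_0)^2\bigr]^\gamma-\bigl[(1+\eta+x\eta_x)(1+\eta)^2\bigr]^\gamma $ plus $ \tfrac43\mu(\gamma-1) $ times $ \int_0^\tau\alpha^{3\gamma-1}\bigl[(1+\eta+x\eta_x)(1+\eta)^2\bigr]^\gamma\tfrac{\mathfrak B^2}{\overline p}\,d\tau' $. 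Because $ \|\eta\|_{L^\infty},\|x\eta_x\|_{L^\infty} $ are already controlled through Proposition \ref{prop:point-wise-0}, the non-$J$ part is bounded, up to quadratic corrections, by $ \|q_0\|_{L^\infty}+\|\eta_0\|_{L^\infty}+\|x\eta_{0,x}\|_{L^\infty}+(\mathcal E_0+\mathcal E_1)^{1/2} $, so \eqref{propest:point-wise-2}, \eqref{propest:point-wise-21} and \eqref{propest:point-wise-3} reduce respectively to bounding $ \|J(\tau)\|_{L^\infty(\varepsilon_0,1)} $, $ \alpha^{3-3\gamma}\|J(\tau)\|_{L^\infty(0,1)} $ and $ \|J(\tau)\|_{L^\infty(0,1)} $ uniformly in $ \tau $.

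Second, near the free boundary I would use the representation of $ \mathfrak B $ obtained by integrating \subeqref{eq:perturbation-0}{1} over $ (x,1) $, exactly as in \eqref{ana:001}: $ \tfrac43\mu\alpha^3\mathfrak B(x) $ is a sum of terms each carrying a factor $ \bigl(\int_x^1\overline\rho\,dy\bigr)^{1/2} $ against an energy-type quantity, the term $ \alpha^{4-3\gamma}\overline p(x)q(x) $, and the viscous remainder $ \alpha^3\int_x^1\bigl(\tfrac{\eta_\tau}{1+\eta}\bigr)_y\,dy $. Dividing by $ \overline p(x)^{1/2} $ and invoking the elliptic comparison $ \int_x^1\overline\rho\,dy\lesssim\overline p(x) $ for $ x>1/2 $, the first group becomes an honest (weightless) multiple of the energy functionals, the second becomes $ \alpha^{4-3\gamma}\overline p(x)^{1/2}q(x) $, and, writing $ \bigl(\tfrac{\eta_\tau}{1+\eta}\bigr)_x=\tfrac{1+\eta+x\eta_x}{x(1+\eta)}\mathfrak B $ and using Cauchy--Schwarz, the remainder is bounded by $ \alpha^3(1-x)^{1/2}\bigl(\int_x^1\tfrac{\mathfrak B^2}{\overline p}\,dy\bigr)^{1/2} $. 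Squaring and running a Gr\"onwall argument in the $ x $-variable inward from $ x=1 $ — the coefficient $ (1-x) $ being small on $ (\varepsilon_0,1) $ for $ \varepsilon_0 $ near $ 1 $ — I expect $ \bigl\|\tfrac{\mathfrak B^2}{\overline p}\bigr\|_{L^\infty(\varepsilon_0,1)}\lesssim(\alpha^{-4-r_1}+\alpha^{-4-r_2}+\alpha^{2-6\gamma})\mathcal E_0+\alpha^{2-6\gamma}\bigl\|\tfrac{\zeta}{\overline p}\bigr\|_{L^\infty(\varepsilon_0,1)}^2 $, using $ \overline p q^2\simeq\zeta^2/\overline p $. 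Multiplying by $ \alpha^{3\gamma-1} $ and integrating in $ \tau $, the constraint \eqref{constraint-001} — precisely $ 3\gamma-3-r_1<2\sigma_1 $ together with $ \gamma<7/3 $, so that $ 3\gamma-5-r_1,\,3\gamma-5-r_2<0 $ — is what makes the energy part of $ J $ bounded uniformly in $ \tau $, while the quadratic part carries $ \int_0^\tau\alpha^{1-3\gamma}\,d\tau'\lesssim\beta_1^{-1} $, which is small for $ \beta_1 $ large; a continuity argument on $ \sup_{[0,\tau]}\|\zeta/\overline p\|_{L^\infty(\varepsilon_0,1)} $ then closes and gives \eqref{propest:point-wise-2}.

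Third, away from the boundary $ \overline p $ is bounded below, so $ \mathfrak B^2/\overline p\lesssim\mathfrak B^2 $. Using only the crude a priori bound $ \|\mathfrak B\|_{L^\infty}\lesssim\omega\alpha^{-1} $ gives $ \int_0^\tau\alpha^{3\gamma-1}\|\mathfrak B\|_{L^\infty}^2\,d\tau'\lesssim\omega^2\int_0^\tau\alpha^{3\gamma-3}\,d\tau'\lesssim\omega^2\beta_1^{-1}\alpha^{3\gamma-3} $, which, combined with the boundary estimate of the previous step, already yields the controlled-growth bound \eqref{propest:point-wise-21}. To remove the growth and obtain \eqref{propest:point-wise-3}, I would instead use the sharp pointwise bound \eqref{ene:104}--\eqref{ene:111}, namely $ \|\mathfrak B\|_{L^\infty}^2\lesssim\alpha^{2-6\gamma}(\|\zeta\|_{L^\infty}^2+\|\eta\|_{L^\infty}^2)+\alpha^{-r_1-2\sigma_1}(\mathcal E_0+\mathcal E_1) $ together with $ \|\zeta\|_{L^\infty}\lesssim\|\zeta/\overline p\|_{L^\infty} $: then $ \int_0^\tau\alpha^{3\gamma-1}\|\mathfrak B\|_{L^\infty}^2\,d\tau' $ involves $ \int_0^\tau(\alpha^{1-3\gamma}+\alpha^{3\gamma-1-r_1-2\sigma_1})\,d\tau' $, which is bounded uniformly in $ \tau $ exactly when $ 3\gamma-1-r_1<2\sigma_1 $, i.e. under \eqref{constraint-002}; the quadratic self-interaction $ \int_0^\tau\alpha^{1-3\gamma}\|\zeta/\overline p\|_{L^\infty}^2\,d\tau' $ is again $ O(\beta_1^{-1}) $ after bootstrapping, so a final continuity argument closes.

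The hard part will not be any individual inequality but the double feedback hidden behind the schematic $ \frac{d}{d\tau}q\lesssim\alpha^{1-3\gamma}q^2+A $: the boundary reconstruction of $ \mathfrak B $ already contains $ \mathfrak B $ (through the viscous remainder), and the resulting bound for the heating contains $ \|\zeta/\overline p\|^2 $. Both are absorbable only because the first carries the geometric smallness $ (1-x) $ and the second the factor $ \int\alpha^{1-3\gamma}\,d\tau'=O(\beta_1^{-1}) $, and the whole scheme survives precisely on the windows $ I_1\supset I_2 $ that one reads off by demanding convergence of the relevant $ \tau $-integrals of powers of $ \alpha $, with $ r_1,\sigma_1 $ tuned according to \eqref{constraint-001} resp. \eqref{constraint-002}. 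Carrying out that exponent bookkeeping — and keeping the $ L^\infty $ norms in the feedback consistently on the same region — is the main technical work.
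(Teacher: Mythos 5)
Your proposal is correct and follows essentially the same route as the paper's proof: the integrated entropy identity reduces everything to the cumulative heating $\int_0^\tau \alpha^{3\gamma-1}\mathfrak B^2/\overline p\,d\tau'$, the boundary reconstruction of $\mathfrak B$ from \subeqref{eq:perturbation-0}{1} weighted by $\overline p^{-1/2}$ together with $\int_x^1\overline\rho\,dy\lesssim\overline p(x)$ gives exactly the paper's estimate \eqref{ene:114}, and the exponent conditions $3\gamma-3-r_1<2\sigma_1$ (resp. $3\gamma-1-r_1<2\sigma_1$) with a continuity argument close the cases as in \eqref{constraint-011} and \eqref{constraint-010}. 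The only differences are cosmetic: you absorb the viscous remainder by a Gr\"onwall argument in $x$ where the paper absorbs it directly in the sup norm using the smallness of $1-x$ and $\overline p^{1/2}$ near the boundary, and you split interior/boundary via the heating term where the paper first bounds $\norm{\zeta}{\Lnorm{\infty}}$ and then divides by $\overline p$, which is bounded below away from $x=1$.
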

\begin{proof} Again, we separate the proof into three steps. 
	{\par\noindent\bf Step 1:} Estimate of $ \norm{\zeta}{\Lnorm{\infty}} $. After substituting \eqref{ene:104} and \eqref{propest:point-wise-1} in \eqref{ene:102}, one has
	\begin{equation}\label{ene:112}
		\begin{aligned}
			& \norm{\zeta}{\Lnorm{\infty}} \lesssim \norm{\zeta_0}{\Lnorm{\infty}} + \norm{\eta_0}{\Lnorm{\infty}} + \norm{x\eta_{0,x}}{\Lnorm{\infty}} + \bigl(\mathcal E_0 + \mathcal E_1 \bigr)^{1/2} \\
			& ~~ + \int_0^\tau \biggl( \alpha^{1-3\gamma}  \norm{\zeta}{\Lnorm{\infty}}^2 + \alpha^{1-3\gamma} (\norm{\zeta_0}{\Lnorm{\infty}}^2 + \norm{\eta_0}{\Lnorm{\infty}}^2 + \norm{x\eta_{0,x}}{\Lnorm{\infty}}^2 + \mathcal E_0 + \mathcal E_1) \\
			& ~~ + \alpha^{3\gamma + r_4 - 5}(\mathcal E_0 + \mathcal E_1)  \biggr) \,d\tau.
		\end{aligned}
	\end{equation}
	Then, for $ \norm{\zeta_0}{\Lnorm{\infty}} \leq 2 \norm{q_0}{\Lnorm{\infty}} $ small enough, applying continuity arguments with \eqref{ene:112} yields
	\begin{equation}\label{ene:1161}
		\begin{aligned}
		& \norm{\zeta}{\Lnorm{\infty}} \lesssim \norm{\zeta_0}{\Lnorm{\infty}} + \norm{\eta_0}{\Lnorm{\infty}} + \norm{x\eta_{0,x}}{\Lnorm{\infty}} + \norm{\zeta_0}{\Lnorm{\infty}}^2 + \norm{\eta_0}{\Lnorm{\infty}}^2 + \norm{x\eta_{0,x}}{\Lnorm{\infty}}^2\\
		& ~~~~ + \bigl( \mathcal E_0 + \mathcal E_1\bigr)^{1/2} + \bigl( \mathcal E_0 + \mathcal E_1 \bigr),
		\end{aligned}
	\end{equation}
	provided
	\begin{gather*}
		3\gamma + r_4 - 5 < 0,
	\end{gather*}
	or, after substituting \eqref{indices},
	\begin{equation}\label{constraint-010}
		3\gamma - 1 - r_1 < 2\sigma_1.
	\end{equation}

	{\par\noindent\bf Step 2:} Estimate of $ \overline p^{-1/2} \mathfrak B $ near the boudnary $ \lbrace x = 1\rbrace $. Consider $ x_\varepsilon \in (1/2,1) $ such that $ \max\lbrace \overline p(x_\varepsilon)^{1/2}, 1-x_{\varepsilon} \rbrace \leq \varepsilon $, $ \varepsilon \in (0,1) $. We denote $ \Lnorm{\infty}_\varepsilon : = \Lnorm{\infty}(x_\varepsilon, 1) $. We integrate \subeqref{eq:perturbation-0}{1} in the interval $ (x,1) $, for $ x \in(0,1) $ to obtain, similar to \eqref{ene:105},
	\begin{equation*}\tag{\ref{ene:105}'} \label{ene:113}
	\begin{aligned}
		& \dfrac{4}{3} \mu \alpha^3 \mathfrak B = 4 \mu \alpha^3 \underbrace{\int_x^1 \biggl( \dfrac{\eta_\tau}{1+\eta} \biggr)_x \,dx}_{(vi')} + \alpha^{4-3\gamma} \underbrace{\dfrac{\zeta}{\bigl\lbrack(1+\eta +x\eta_x)(1+\eta)^2 \bigr\rbrack^\gamma}}_{(vii)} \\
		& ~~~~ + \alpha^{4-3\gamma} \delta \underbrace{\int_x^1 \dfrac{x\overline\rho \eta}{1+\eta}\,dx }_{(viii)} - \alpha \underbrace{\int_x^1 \dfrac{x\overline\rho \partial_\tau^2 \eta}{(1+\eta)^2}\,dx}_{(ix)}- \alpha_\tau \underbrace{\int_x^1 \dfrac{x\overline\rho \partial_\tau \eta}{(1+\eta)^2}\,dx}_{(x)}. 
	\end{aligned}
	\end{equation*}
	Notice, $ \overline p $ is non-increasing and 
	\begin{equation*}
		\biggl( \dfrac{\eta_\tau}{1+\eta} \biggr)_x = \dfrac{1+\eta + x\eta_x}{x(1+\eta)} \mathfrak B, ~~ \delta \int_x^1 x \overline \rho \,dx = \overline p, 
	\end{equation*}
	from \subeqref{eq:self-similar-sol}{2} and \subeqref{eq:self-similar-sol}{3}. Therefore, we estimate the right-hand side of \eqref{ene:113} as follows: for $ x \in (x_\varepsilon, 1) \subset (1/2,1) $,
	\begin{align*}
		& \abs{(vi')}{} \lesssim (1-x) \overline p^{1/2} \norm{\overline p^{-1/2} \mathfrak B}{\Lnorm{\infty}_\varepsilon} \lesssim \varepsilon \overline p^{1/2} \norm{\overline p^{-1/2} \mathfrak B}{\Lnorm{\infty}_\varepsilon}, \\
		& \abs{(vii)}{} \lesssim \overline p \norm{\dfrac{\zeta}{\overline p}}{\Lnorm{\infty}_\varepsilon}, \\
		& \abs{(viii)}{} \lesssim  \int_x^1 x \overline\rho \,dx \times \norm{\eta}{\Lnorm{\infty}} \lesssim \overline p \times \norm{\eta}{\Lnorm{\infty}} \\
		& \abs{(ix)}{} \lesssim \bigl( \int_x^1 x \overline\rho \,dx \bigr)^{1/2} \bigl( \int_0^1 x^4 \overline\rho \abs{\partial_\tau^2 \eta}{2} \,dx \bigr)^{1/2} \lesssim \overline p^{1/2} \alpha^{-r_2/2} \mathcal E_0^{1/2}, \\
		& \abs{(x)}{} \lesssim \bigl( \int_x^1 x \overline\rho \,dx \bigr)^{1/2} \bigl( \int_0^1 x^4 \overline\rho \abs{\partial_\tau \eta}{2} \,dx \bigr)^{1/2} \lesssim \overline p^{1/2} \alpha^{-r_1/2} \mathcal E_0^{1/2}.
	\end{align*}
	Consequently, \eqref{ene:113} yields, for $ \varepsilon \in (0,\varepsilon_0] $ with $ \varepsilon_0 \in (0,1) $ small enough, 
	\begin{equation}\label{ene:114}
			\norm{\overline p^{-1/2} \mathfrak B}{\Lnorm{\infty}_\varepsilon} \lesssim  \alpha^{1-3\gamma} \varepsilon \bigl( \norm{\dfrac{\zeta}{\overline p}}{\Lnorm{\infty}_\varepsilon} + \norm{\eta}{\Lnorm{\infty}} \bigr) + ( \beta_2 \alpha^{-2-r_1/2} + \alpha^{-2-r_2/2} ) \mathcal E_0^{1/2}.
	\end{equation}
	
{\par\noindent\bf Step 3:} Estimate of $ \dfrac{\zeta}{\overline p} $ near the boundary $ \lbrace x =1 \rbrace $.  We fix $ \varepsilon = \varepsilon_0 \in (0,1) $, where $ \varepsilon_0 $ is given before so that \eqref{ene:114} holds. In particular, $ \Lnorm{\infty}_{\varepsilon_0} = \Lnorm{\infty}(x_{\varepsilon_0},1) $.  From \eqref{ene:101}, we have
\begin{equation*}
	\abs{\dfrac{\zeta}{\overline p}}{} \lesssim \abs{\dfrac{\zeta_0}{\overline p}}{} +  \abs{ \eta_0}{} + \abs{x\eta_{0,x}}{} + \abs{\eta}{} + \abs{x\eta_{x}}{}+ \int_0^\tau \alpha^{3\gamma-1} \abs{ \overline p^{-1/2} \mathfrak B}{2} \,d\tau.
\end{equation*}
Consequently, after substituting \eqref{propest:point-wise-1} and \eqref{ene:114} into the above expression, we obtain
\begin{equation}\label{ene:115}
\begin{aligned}
	& \norm{\dfrac{\zeta}{\overline p}}{\Lnorm{\infty}_{\varepsilon_0}} \lesssim \norm{\dfrac{\zeta_0}{\overline p}}{\Lnorm{\infty}} +  \norm{\zeta_0}{\Lnorm{\infty}} + \norm{\eta_0}{\Lnorm{\infty}} + \norm{x\eta_{0,x}}{\Lnorm{\infty}} + (\mathcal E_0 + \mathcal E_1)^{1/2} \\
	& ~~~~ + \int_0^\tau \alpha^{1-3\gamma}  \norm{\dfrac{\zeta}{\overline p}}{\Lnorm{\infty}_{\varepsilon_0}}^2\,d\tau 
	+ \int_0^\tau \alpha^{1-3\gamma} \bigl( \norm{\zeta_0}{\Lnorm{\infty}}^2 + \norm{\eta_0}{\Lnorm{\infty}}^2 + \norm{x\eta_{0,x}}{\Lnorm{\infty}}^2 \\
	& ~~~~ ~~~~ + \mathcal E_0 + \mathcal E_1 \bigr) \,d\tau 
	 + \int_0^\tau (\beta_2^2 + 1) ( \alpha^{3\gamma -r_1 - 5} + \alpha^{3\gamma - r_2 - 5})  \mathcal E_0 \,d\tau.
\end{aligned}
\end{equation}
Then, for $ \norm{\dfrac{\zeta_0}{\overline p}}{\Lnorm{\infty}} \leq 2 \norm{q_0}{\Lnorm{\infty}} $ small enough, applying continuity arguments with \eqref{ene:115} yields
\begin{equation}\label{ene:116}
	\begin{aligned}
	& \norm{\dfrac{\zeta}{\overline p}}{\Lnorm{\infty}_{\varepsilon_0}} \lesssim \norm{\dfrac{\zeta_0}{\overline p}}{\Lnorm{\infty}} + \norm{\eta_0}{\Lnorm{\infty}} + \norm{x\eta_{0,x}}{\Lnorm{\infty}} + \norm{\dfrac{\zeta_0}{\overline p}}{\Lnorm{\infty}}^2 + \norm{\eta_0}{\Lnorm{\infty}}^2 \\
	& ~~~~ + \norm{x\eta_{0,x}}{\Lnorm{\infty}}^2 
	+  \bigl( \mathcal E_0 + \mathcal E_1\bigr)^{1/2} + \bigl( \mathcal E_0 + \mathcal E_1\bigr),
	\end{aligned}
\end{equation}
provided
\begin{gather*}
	3\gamma - r_1 - 5 <0,  ~ 3\gamma - r_2 - 5 <0,
\end{gather*}
or, after substituting \eqref{indices}
	\begin{equation}\label{constraint-011}
		3\gamma - 3 - r_1 < 2\sigma_1.
	\end{equation}
	In this case, we can only derive, from \eqref{ene:102} and \eqref{propest:point-wise-1}
	\begin{equation}\label{ene:1163}
		\norm{\zeta}{\Lnorm{\infty}} \lesssim \alpha^{3\gamma - 1 - 2\sigma} \biggl\lbrace \norm{\zeta_0}{\Lnorm{\infty}} + \norm{\eta_0}{\Lnorm{\infty}} + \norm{x\eta_{0,x}}{\Lnorm{\infty}} + \bigl(\mathcal E_0 + \mathcal E_1 \bigr)^{1/2} \biggr\rbrace.
	\end{equation}

Consequently, from \eqref{ene:116}, \eqref{constraint-011} and \eqref{ene:1163}, we have shown \eqref{propest:point-wise-2} and \eqref{propest:point-wise-21}; from \eqref{ene:1161}, \eqref{constraint-010}, \eqref{ene:116}, we have shown \eqref{propest:point-wise-3}. 
%
\end{proof}

\section{Regularity}\label{sec:regularity}
In this section, we focus on the regularity estimates. In particular, we aim at obtaining the $ H^1 $ norm of $ \zeta $, and the $ H^2(0,1) $ norm of $ x\eta_\tau $, which yields the $ H^2(0,1) $ norm of $ x\eta $ via the fundamental theorem of calculus. With such regularity in hand, system \eqref{eq:perturbation-0} holds almost everywhere for $ (x,\tau) \in (0,1) \times (0,T) $ for any given $ T \in (0,\infty) $,  and therefore the solution we obtained is a strong solution. To shorten the notation in the following, we use
\begin{equation}\label{initial-regularity}
	\begin{aligned}
		& \mathfrak G_0: = \norm{\zeta_0}{\Lnorm{\infty}} + \norm{\eta_0}{\Lnorm{\infty}} + \norm{x\eta_{0,x}}{\Lnorm{\infty}} \\
		& ~~~~ + \norm{\zeta_{0,x}}{\Lnorm{2}} + \norm{\eta_{0,x}}{\Lnorm{2}} + \norm{x\eta_{0,x}}{\Lnorm{2}}, 
	\end{aligned}
\end{equation}
to denote the initial date for the regularity estimates, below.

\begin{prop}\label{prop:regularity}
Under the same assumptions as in Proposition \ref{prop:energy-estimates}, we have
\begin{equation}\label{ene:205}
	\begin{aligned}
		& \sup_{0 \leq  \tau \leq  T} \lbrace \norm{\eta_{x}(\tau)}{\Lnorm{2}},  \norm{x\eta_{xx}(\tau)}{\Lnorm{2}},  \norm{\eta_{x\tau}(\tau)}{\Lnorm{2}},  \norm{x\eta_{xx\tau}(\tau)}{\Lnorm{2}}\rbrace \\
		& ~~~~ \leq C_{r_1, r_2, \beta_1, \beta_2} \bigl( \mathfrak G_0 + \mathcal E_0^{1/2}(T) + \mathcal E_1^{1/2}(T) \bigr),
	\end{aligned}
\end{equation}
and
\begin{equation}\label{ene:206}
	\sup_{0\leq \tau \leq T}\norm{\zeta_x(\tau)}{\Lnorm{2}} \leq C_{r_1, r_2, \beta_1, \beta_2} \bigl\lbrace \mathfrak G_0 + \alpha^{3\gamma-1} \bigl( \mathcal E_0^{1/2}(T) + \mathcal E_1^{1/2}(T)\bigr) \bigr\rbrace. 
\end{equation}
Here $ C_{r_1, r_2, \beta_1, \beta_2} $ is a constant depending on $r_1, r_2, \beta_1, \beta_2 $.
\end{prop}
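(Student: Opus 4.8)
\textbf{Proof strategy for Proposition \ref{prop:regularity}.}
Both estimates rest on the relative-entropy structure of the momentum equation \subeqref{eq:perturbation-0}{1}. With $\mathfrak H$ the functional in \eqref{def:relative-entropy-functional}, the identity $\mathfrak B+3\,\dfrac{\eta_\tau}{1+\eta}=(\mathfrak H(\eta))_\tau$ shows that the right-hand side of \subeqref{eq:perturbation-0}{1} is exactly $\tfrac43\mu\alpha^3(\mathfrak H(\eta))_{x\tau}$, so that
\begin{equation*}
	(\mathfrak H(\eta))_{x\tau}=\frac{3}{4\mu\alpha^3}\biggl[\frac{x\overline\rho}{(1+\eta)^2}\bigl(\alpha\partial_\tau^2\eta+\alpha_\tau\partial_\tau\eta\bigr)-\alpha^{4-3\gamma}\frac{\delta x\overline\rho\,\eta}{1+\eta}+\alpha^{4-3\gamma}\Bigl(\tfrac{\zeta}{[(1+\eta+x\eta_x)(1+\eta)^2]^\gamma}\Bigr)_x\biggr].
\end{equation*}
I would first extract from this a pointwise-in-$\tau$ $L^2_x$-bound of $(\mathfrak H(\eta))_{x\tau}$: the $\partial_\tau^2\eta$- and $\partial_\tau\eta$-terms inherit the favourable weight $\alpha^{-2}$ (using $\alpha_\tau\simeq\alpha$ from \eqref{expandingrate}) from the viscosity coefficient $\alpha^3$, and are controlled by $\mathcal E_0+\mathcal E_1$ after splitting the $x$-integral --- near $x=1$ via $x^2\overline\rho^2\lesssim x^4\overline\rho$ and the $\overline\rho$-weighted norms in $\mathcal E_0$, near $x=0$ via $\chi\equiv1$, $\overline\rho\in C[0,1]$, the $\chi^{1/2}x\overline\rho^{1/2}\eta_{\tau\tau}$-norm of $\mathcal E_1$ and \eqref{ene:009}; the $\eta$-term carries $\alpha^{1-3\gamma}$, which is time-integrable; and the pressure-gradient term, since $\tfrac{\zeta}{[(1+\eta+x\eta_x)(1+\eta)^2]^\gamma}=\overline p q$ and $\overline p_x=-\delta x\overline\rho$, contributes $\alpha^{1-3\gamma}$ times $\|\zeta_x\|_{L^2}$ plus $\|\zeta\|_{L^\infty}$ (small) times the lower-order $\|\eta_x\|_{L^2}+\|x\eta_{xx}\|_{L^2}$.

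Lemma \ref{lm:estimates-of-relative-entropy} --- applicable with $h=\eta$ because \eqref{propest:point-wise-1} and $\alpha\ge\alpha_0$ furnish its hypothesis \eqref{lm:s003} --- then turns this into the desired bounds: \eqref{lm:s002} gives $\|\eta_{x\tau}\|_{L^2}^2+\|x\eta_{xx\tau}\|_{L^2}^2\lesssim\|(\mathfrak H(\eta))_{x\tau}\|_{L^2}^2+\varepsilon(\|\eta_x\|_{L^2}^2+\|x\eta_{xx}\|_{L^2}^2)$, while \eqref{lm:s001} together with $(\mathfrak H(\eta))_x(\tau)=(\mathfrak H(\eta_0))_x+\int_0^\tau(\mathfrak H(\eta))_{x\tau}\,d\tau'$ gives $\|\eta_x(\tau)\|_{L^2}+\|x\eta_{xx}(\tau)\|_{L^2}\lesssim\mathfrak G_0+\int_0^\tau\|(\mathfrak H(\eta))_{x\tau}\|_{L^2}\,d\tau'$. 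For $\zeta_x$ I would differentiate the transport equation \subeqref{eq:perturbation-0}{2} in $x$ --- which has no degenerate coefficient on $\zeta$ --- and integrate in $\tau$, $\|\zeta_x(\tau)\|_{L^2}\le\|\zeta_{0,x}\|_{L^2}+\int_0^\tau\|\zeta_{x\tau}\|_{L^2}\,d\tau'$; the source $\zeta_{x\tau}$ consists of $\overline p_x=-\delta x\overline\rho$ times $(\eta_\tau+x\eta_{x\tau})$-type terms (controlled by $\mathcal E_0,\mathcal E_1$), $\overline p$ times $\eta_{x\tau},x\eta_{xx\tau}$ and products $\eta_x\eta_\tau,\,x\eta_{xx}\eta_\tau$ (controlled by the previous step and the point-wise decay of $\eta_\tau$), and the $x$-derivative of the viscous source $\tfrac43\mu(\gamma-1)\alpha^{3\gamma-1}[(1+\eta+x\eta_x)(1+\eta)^2]^\gamma\mathfrak B^2$ --- i.e.\ $\alpha^{3\gamma-1}$ times $\mathfrak B\mathfrak B_x$ and $\mathfrak B^2[(1+\eta+x\eta_x)(1+\eta)^2]_x$ --- which by \eqref{propest:point-wise-1} ($\|\mathfrak B\|_{L^\infty}\lesssim\alpha^{-1}(\cdots)$) reduces to $\alpha^{3\gamma-2}$ times $\|\eta_{x\tau}\|_{L^2}+\|x\eta_{xx\tau}\|_{L^2}$ plus lower order. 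The factor $\alpha^{3\gamma-1}$ in \eqref{ene:206} is precisely this viscous, entropy-producing term integrated in $\tau$ (together with the crude bound $\tau\lesssim\alpha^{3\gamma-1}$, valid for $\gamma>\tfrac13$).

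The two families of estimates are coupled --- $\|\zeta_x\|$ enters the bound for $(\mathfrak H(\eta))_{x\tau}$, and $\|\eta_{x\tau}\|,\|x\eta_{xx\tau}\|$ enter the bound for $\zeta_{x\tau}$ --- and the loop closes because the couplings always carry a decaying weight. Concretely, feeding $\|\zeta_x(\tau)\|_{L^2}\le\|\zeta_{0,x}\|_{L^2}+\int_0^\tau\|\zeta_{x\tau}\|_{L^2}\,d\tau'$ into the $\alpha^{1-3\gamma}\|\zeta_x\|$-term of $\int_0^\tau\|(\mathfrak H(\eta))_{x\tau}\|_{L^2}\,d\tau'$ and applying Fubini, the inner integral $\int_\sigma^\tau\alpha^{1-3\gamma}(\tau')\,d\tau'\lesssim\beta_1^{-1}\alpha^{1-3\gamma}(\sigma)$ produces a second copy of the decaying weight, so that the whole contribution is bounded by $\int_0^\tau\alpha^{1-3\gamma}(\sigma)\|\zeta_{x\tau}(\sigma)\|_{L^2}\,d\sigma\lesssim\int_0^\tau\alpha^{-1}(\sigma)\,\omega\bigl(\|\eta_{x\tau}(\sigma)\|_{L^2}+\|x\eta_{xx\tau}(\sigma)\|_{L^2}+\cdots\bigr)\,d\sigma$, which is finite and small; Grönwall and the continuity argument (as in the proof of Theorem \ref{thm:full}) then close both estimates without exponential growth. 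The main obstacle is exactly this interlocking of the $H^1$-estimate of $\zeta$ with the $H^2$-estimate of $x\eta_\tau$ through the pressure gradient and the viscous source, together with the careful bookkeeping of $\alpha$-exponents and the degeneracies of $\overline\rho$ at $x=1$ and of the cut-off $\chi$ near $x=\tfrac34$. Once $\|\eta_{x\tau}\|_{L^2},\|x\eta_{xx\tau}\|_{L^2},\|\eta_x\|_{L^2},\|x\eta_{xx}\|_{L^2}$ and $\|\zeta_x\|_{L^2}$ are under control, \eqref{ene:205}--\eqref{ene:206} follow, and $x\eta_\tau,\,x\eta\in L^\infty_\tau H^2_x$, $\eta\in L^\infty_\tau H^1_x$ come from $(x\eta_\tau)_{xx}=x\eta_{xx\tau}+2\eta_{x\tau}$ and the fundamental theorem of calculus in $\tau$.
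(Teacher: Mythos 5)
Your treatment of \eqref{ene:205} is essentially the paper's: you use the same relative-entropy device $\tfrac43\mu\alpha^3(\mathfrak H(\eta))_{x\tau}$ for the right-hand side of \subeqref{eq:perturbation-0}{1}, the same splitting of the $\overline\rho$-weighted acceleration terms between $\mathcal E_0$, $\mathcal E_1$ and \eqref{ene:009}, Lemma \ref{lm:estimates-of-relative-entropy} to pass from $\mathfrak H(\eta)$ to $\eta_{x\tau},x\eta_{xx\tau}$, and a Gr\"onwall-type closure of the coupling with $\|\zeta_x\|_{\Lnorm 2}$ through the decaying weight $\alpha^{1-3\gamma}$ (your bookkeeping of where the smallness sits is slightly off --- the $\overline p$-pressure contribution to $\zeta_{x\tau}$ carries no factor $\omega$, only the integrable weight and $\beta_1^{-1}$ --- but the loop still closes).

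The genuine gap is in your route to \eqref{ene:206}. You propose $\|\zeta_x(\tau)\|_{\Lnorm 2}\le\|\zeta_{0,x}\|_{\Lnorm 2}+\int_0^\tau\|\zeta_{x\tau}\|_{\Lnorm 2}\,d\tau'$ and claim that the terms ``$\overline p$ times $\eta_{x\tau},x\eta_{xx\tau}$'' are controlled by the previous step and the point-wise decay of $\eta_\tau$. This is not correct: the a priori decay \eqref{def:a-priori-assm} concerns $\eta_\tau,x\eta_{x\tau}$ in $\Lnorm\infty$ and gives no decay whatsoever for $\|\eta_{x\tau}\|_{\Lnorm 2}$ or $\|x\eta_{xx\tau}\|_{\Lnorm 2}$; these are merely bounded uniformly by $\mathfrak G_0+\mathcal E_0^{1/2}+\mathcal E_1^{1/2}$. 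Hence $\int_0^\tau\|\overline p\,\gamma e^{\gamma\mathfrak H(\eta)}(\mathfrak H(\eta))_{x\tau}\|_{\Lnorm 2}\,d\tau'$ grows linearly in $\tau$ with an $O(1)$ constant, and the best you can conclude is $\sup_\tau\|\zeta_x\|_{\Lnorm2}\lesssim\tau\,(\mathfrak G_0+\mathcal E_0^{1/2}+\mathcal E_1^{1/2})+\cdots$, which is strictly weaker than \eqref{ene:206}, where $\mathfrak G_0$ does not carry a growing factor. The paper's proof avoids this by exploiting that $\overline p$ is $\tau$-independent: writing \subeqref{eq:perturbation-0}{2} as $\partial_\tau\zeta+\overline p\,(e^{\gamma\mathfrak H(\eta)})_\tau=\tfrac43\mu(\gamma-1)\alpha^{3\gamma-1}e^{\gamma\mathfrak H(\eta)}\mathfrak B^2$ and differentiating in $x$, the problematic pressure terms combine into the exact time derivative $\delta x\overline\rho\,(e^{\gamma\mathfrak H(\eta)})_\tau-\overline p\,(e^{\gamma\mathfrak H(\eta)})_{x\tau}=-\partial_\tau\bigl[(\overline p\,e^{\gamma\mathfrak H(\eta)})_x\bigr]$, so their time integral telescopes and contributes only endpoint quantities bounded by $\mathfrak G_0$, $\|\eta\|_{\Lnorm\infty}$, $\|x\eta_x\|_{\Lnorm\infty}$ and $\|(\mathfrak H(\eta))_x\|_{\Lnorm 2}$ (this is exactly \eqref{ene:201}--\eqref{ene:202}). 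Only the viscous source is genuinely integrated in time, and it carries the factor $\|\mathfrak B\|_{\Lnorm\infty}\lesssim\omega\alpha^{-\sigma}$, which is what confines the growth in \eqref{ene:206} to the factor $\alpha^{3\gamma-1}$ multiplying the energy part. Without this telescoping observation (or an equivalent one), your argument does not deliver \eqref{ene:206} as stated.
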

\begin{proof}
We are going to apply Lemma \ref{lm:estimates-of-relative-entropy}. In the following, we assume $ \omega $ is small enough so that  estimates \eqref{lm:s001} and \eqref{lm:s002} are valid for $ h = \eta $. Notice, the right-hand side of \subeqref{eq:perturbation-0}{1} can be written as,
\begin{align*}
	& \dfrac{4}{3} \mu \alpha^3 \biggl( \dfrac{\eta_\tau + x\eta_{x\tau}}{1+\eta+x\eta_x} + 2 \dfrac{\eta_\tau}{1+\eta} \biggr)_x = \dfrac{4}{3} \mu \alpha^3 \bigl( \ln (x^3 (1+\eta)^3)_x \bigr)_{x\tau} \\
	& ~~~~ = \dfrac{4}{3} \mu \alpha^3 \bigl( \log \bigl\lbrack (1+\eta)^2(1+\eta+x\eta_x) \bigr\rbrack  \bigr)_{x\tau} = \dfrac{4}{3} \mu \alpha^3  \bigl( \mathfrak H(\eta) \bigr)_{x\tau},
\end{align*}
where $ \mathfrak H (\cdot) $ is defined in \eqref{def:relative-entropy-functional}. Then directly from \subeqref{eq:perturbation-0}{1}, one can derive
\begin{equation}\label{ene:200}
\begin{aligned}
	& \alpha^3 \norm{\bigl( \mathfrak H (\eta) \bigr)_{x\tau}}{\Lnorm{2}} \lesssim \alpha \norm{x \overline \rho \partial_{\tau}^2 \eta}{\Lnorm{2}} + \alpha_\tau \norm{x \overline \rho \partial_{\tau} \eta}{\Lnorm{2}} \\
	& ~~~~ + \alpha^{4-3\gamma} \norm{x\overline\rho \eta}{\Lnorm{2}} + \alpha^{4-3\gamma} \norm{\zeta_x}{\Lnorm{2}} +\alpha^{4-3\gamma}  \norm{\zeta}{\Lnorm{\infty}} \bigl( \norm{\eta_x}{\Lnorm{2}} + \norm{x\eta_{xx}}{\Lnorm{2}}  \bigr)\\
	& \lesssim \alpha \biggl\lbrace \bigl( \int x^4 \overline \rho \abs{\partial_{\tau\tau} \eta}{2} \,dx \bigr)^{1/2}  + \bigl(  \int \chi x^2 \overline \rho \abs{\partial_{\tau\tau} \eta}{2} \,dx \bigr)^{1/2} + \beta_2  \bigl( \int x^4 \overline \rho \abs{\partial_{\tau} \eta}{2} \,dx \bigr)^{1/2} \\
	& ~~~~ + \beta_2 \bigl( \int \chi \abs{\partial_{\tau} \eta}{2} \,dx \bigr)^{1/2}  \biggr\rbrace + \alpha^{4-3\gamma} \norm{\eta}{\Lnorm{\infty}} + \alpha^{4-3\gamma} \norm{\zeta_x}{\Lnorm{2}} \\
	& ~~~~ + \alpha^{4-3\gamma} \norm{\zeta}{\Lnorm{\infty}} \norm{\bigl( \mathfrak H(\eta) \bigr)_x}{\Lnorm{2}},
\end{aligned}
\end{equation}
where we have applied \eqref{lm:s001}. 
Simultaneously, notice
\begin{gather*}
	\mathfrak B = \bigl( \mathfrak H (\eta) \bigr)_\tau - 3 \dfrac{\eta_\tau}{1+\eta}, \\
	\bigl( \mathfrak B \bigr)_x = \bigl( \mathfrak H (\eta) \bigr)_{x\tau} - 3 \dfrac{\eta_{x\tau}}{1+\eta} + 3 \dfrac{\eta_{\tau} \eta_x}{(1+\eta)^2}, \\
	(1+\eta)^2 ( 1+ \eta +x\eta_x ) = e^{\mathfrak H(\eta)},
\end{gather*}
and \subeqref{eq:perturbation-0}{2} can be written as
\begin{align*}
	\partial_\tau \zeta + \overline p \bigl(e^{\gamma \mathfrak H(\eta)} \bigr)_\tau = \dfrac{4}{3} \mu (\gamma-1) \alpha^{3\gamma-1} e^{ \gamma \mathfrak H(\eta)} \mathfrak B^2.
\end{align*}
Therefore, after taking spatial derivative to \subeqref{eq:perturbation-0}{2}, we have
\begin{equation}\label{ene:201}
	\begin{aligned}
		& \partial_\tau \zeta_{x} =  \delta x \overline\rho   \bigl( e^{\gamma \mathfrak H(\eta)}\bigr)_\tau  - \overline p  \bigl( e^{\gamma \mathfrak H(\eta)} \bigr)_{x\tau} 
		 + \dfrac{4}{3} \mu \gamma (\gamma-1) \alpha^{3\gamma-1}e^{\gamma \mathfrak H(\eta) }  \mathfrak B^2  \bigl( \mathfrak H(\eta) \bigr)_x \\
		& ~~~~ + \dfrac{8}{3} \mu (\gamma - 1) \alpha^{3\gamma-1} e^{\gamma \mathfrak H(\eta)} \mathfrak B \biggl( \bigl( \mathfrak H(\eta) \bigr)_{x\tau} - 3  \dfrac{\eta_{x\tau}}{1+\eta} + 3 \dfrac{\eta_{\tau} \eta_x}{(1+\eta)^2} \biggr),
	\end{aligned}
\end{equation}
and consequently, after integrating \eqref{ene:201} in the temporal variable and taking $ L^2 (0,1) $ norm to the resultant, it follows, with the help of Minkowski's inequality, 
\begin{equation}\label{ene:202}
\begin{aligned}
	& \norm{\zeta_x}{\Lnorm{2}} \lesssim \norm{\zeta_{0,x}}{\Lnorm{2}} + \norm{\delta x\overline \rho e^{\gamma \mathfrak H(\eta_0)}}{\Lnorm{2}} + \norm{ \overline p \bigl( e^{\gamma \mathfrak H(\eta_0)} \bigr)_x }{\Lnorm{2}} \\
	& ~~~~ +  \norm{\delta x\overline \rho e^{\gamma \mathfrak H(\eta)}}{\Lnorm{2}} + \norm{ \overline p \bigl( e^{\gamma \mathfrak H(\eta)} \bigr)_x }{\Lnorm{2}} + \int_0^\tau \alpha^{3\gamma-1} \norm{e^{\gamma \mathfrak H(\eta) }  \mathfrak B^2  \bigl( \mathfrak H(\eta) \bigr)_x}{\Lnorm{2}} \,d\tau \\
	& ~~~~ + \int_0^\tau \alpha^{3\gamma-1} \norm{e^{\gamma \mathfrak H(\eta)} \mathfrak B \biggl( \bigl( \mathfrak H(\eta) \bigr)_{x\tau} - 3  \dfrac{\eta_{x\tau}}{1+\eta} + 3 \dfrac{\eta_{\tau} \eta_x}{(1+\eta)^2} \biggr)}{\Lnorm{2}} \,d\tau \\
	& \lesssim \mathfrak G_0 + \norm{\eta}{\Lnorm{\infty}} + \norm{x\eta_x}{\Lnorm{\infty}} + \norm{\bigl( \mathfrak H(\eta) \bigr)_x}{\Lnorm{2}} \\
	& ~~~~ + \int_0^\tau \alpha^{3\gamma-1} \bigl( \omega \alpha^{-2\sigma}\norm{\bigl( \mathfrak H(\eta) \bigr)_{x}}{\Lnorm{2}} + \omega \alpha^{-\sigma}  \norm{\bigl( \mathfrak H(\eta) \bigr)_{x\tau}}{\Lnorm{2}} + \omega \alpha^{-\sigma} \norm{\eta_{x\tau}}{\Lnorm{2}} \\
	& ~~~~ + \omega \alpha^{-\sigma_1 - \sigma} \norm{\eta_x}{\Lnorm{2}}  \bigr)\,d\tau. 
\end{aligned}	
\end{equation}
Then, after plugging \eqref{ene:202} in \eqref{ene:200}, and applying \eqref{lm:s001}, \eqref{lm:s002}, \eqref{ene:102}, \eqref{propest:point-wise-1}, we have 
\begin{equation}\label{ene:203}
\begin{aligned}
	& \norm{\bigl( \mathfrak H(\eta) \bigr)_{x\tau}}{\Lnorm{2}} \lesssim (1+\beta_2)\alpha^{-2} \bigl( \alpha^{-r_1/2} + \alpha^{-r_2/2} + \alpha^{ r_4/2} + \alpha^{- \mathfrak b/2} \bigr) \bigl(\mathcal E_0^{1/2} + \mathcal E_1^{1/2}\bigr) \\
	& ~~~~ + \alpha^{1-3\gamma} \bigl( \mathfrak G_0 + \mathcal E_0^{1/2} + \mathcal E_1^{1/2} \bigr)  \\
	& ~~~~ + \alpha^{1-3\gamma} \bigl( \norm{\bigl( \mathfrak H(\eta) \bigr)_x}{\Lnorm{2}} + \omega \int_0^\tau \alpha^{3\gamma-1-\sigma} ( \norm{\bigl( \mathfrak H(\eta)\bigr)_x}{\Lnorm{2}} +  \norm{\bigl( \mathfrak H(\eta) \bigr)_{x\tau}}{\Lnorm{2}} ) \,d\tau \bigr)  \\
	& ~~~~ + \omega \alpha^{1-3\gamma}    \max\lbrace \beta_1^{-1} \alpha^{3\gamma-1-2\sigma}, \tau, 1\rbrace \norm{\bigl( \mathfrak H (\eta) \bigr)_x}{\Lnorm{2}} \lesssim (1+\beta_2) \alpha^{r_4/2 - 2}\bigl( \mathcal E_0^{1/2} + \mathcal E_1^{1/2} \bigr)\\
	& ~~~~ + \alpha^{1-3\gamma} \bigl( \mathfrak G_0 + \mathcal E_0^{1/2} + \mathcal E_1^{1/2} \bigr) + \max\lbrace \alpha^{1-3\gamma}, \omega \beta_1^{-1} \alpha^{- \sigma}, \omega \alpha^{1-3\gamma} \tau \rbrace \sup_{\tau} \norm{\bigl( \mathfrak H(\eta) \bigr)_x}{\Lnorm{2}} \\
	& ~~~~ + \omega \max\lbrace \beta_1^{-1} \alpha^{-\sigma} , \alpha^{1-3\gamma} \tau \rbrace \sup_\tau \norm{\bigl( \mathfrak H(\eta) \bigr)_{x\tau}}{\Lnorm{2}} ,
\end{aligned}
\end{equation}
where we have applied inequality \eqref{ineq:integral-time-weight}.
Then, by noticing
\begin{gather*}
	\alpha^{1-3\gamma} \tau \lesssim \alpha^{2-3\gamma},\\
	\sup_\tau \norm{\bigl( \mathfrak H(\eta) \bigr)_x}{\Lnorm{2}} \lesssim \mathfrak G_0 + \int_0^\tau \norm{\bigl( \mathfrak H(\eta) \bigr)_{x\tau}}{\Lnorm{2}}\,d\tau,
\end{gather*}
applying Gr\"onwall's inequality to \eqref{ene:203} yields
\begin{equation}\label{ene:204}
	\sup_\tau \lbrace \norm{\bigl( \mathfrak H(\eta) \bigr)_x}{\Lnorm{2}} , \norm{\bigl( \mathfrak H(\eta) \bigr)_{x\tau}}{\Lnorm{2}} \rbrace \lesssim \mathfrak G_0 + \mathcal E_0^{1/2} + \mathcal E_1^{1/2},
\end{equation}
for $ \omega $ small enough. 
Together with the relative entropy estimate in Lemma \ref{lm:estimates-of-relative-entropy}, \eqref{ene:204} yields \eqref{ene:205}. 
Meanwhile, directly from \eqref{ene:202}, \eqref{ene:206} follows.
\end{proof}

\paragraph{Acknowledgement} 
The work of X. L. is supported by the ONR grant N00014-15-1-2333.


\end{document}